\documentclass[12pt,a4paper]{amsart}
\usepackage[a4paper, footskip=0.5in,  headheight = 0.5in, top=1.25in, bottom=1.25in,  right=1in,  left=1in]{geometry}
\usepackage{macros}

\title[Induced packing treewidth II.]{Induced packing treewidth II.\\ Excluding a clique or a biclique}

\author{Amir Nikabadi$^{\dagger}$}\thanks{$^{\dagger}$IT University of Copenhagen, Denmark (\texttt{amir@itu.dk}). Supported by the Independent Research Fund Denmark (DFF), grant agreement number 2098-00012B}
\author{Paweł Rzążewski$^{\parallel}$}\thanks{$^{\parallel}$Warsaw University of Technology, Poland (\texttt{pawel.rzazewski@pw.edu.pl}). Supported by the National Science Centre grant 2024/54/E/ST6/00094.}

\date{}

\begin{document}
\begin{abstract}
The notion of induced packing treewidth aims to unify classes defined by forbidden induced subgraphs or induced minors with classes defined by the existence of certain structured tree decompositions.
For a graph $H$, \emph{induced $H$-packing treewidth}, denoted by $\treepi_{H}$, is
a tree-decomposition-based graph parameter that, for each bag, measures the maximum number of pairwise anticomplete induced copies of $H$ intersecting that bag.
This notion generalizes some previously studied parameters: when $H=P_1$, it is equivalent to tree-independence number, and when $H=P_2$, it is equivalent to induced matching treewidth.

In this paper, we explore the following two questions: (1) Which structural properties of $H$-free graphs
    persist when we only know that $\treepi_{H}$ is bounded? and (2) How does induced packing treewidth relate to
    other graph-width parameters?

We prove the following:
\medskip

\begin{itemize}[itemsep=2mm,leftmargin=6mm]
   \item For all $a,t\in \mathbb{N}$, $K_{a,a}$-free graphs of bounded induced $P_t$-packing treewidth have bounded tree-independence number.
   This extends the previous result of Abrishami et al. [SIAM J. Discrete Math., 2025] for $t=2$, and a result of Hajebi and Spirkl who showed that $(P_t,K_{a,a})$-free graphs have bounded tree-independence number.

   \item If $H$ is any fixed path or a star, then the class of graphs of bounded induced $H$-packing treewidth is $\chi$-bounded.
   Again, this extends the previous result of Abrishami et al. [SIAM J. Discrete Math., 2025] for $H=P_2$.

    \item Finally, we study the relationship between induced packing treewidth and \emph{sim-width}, a width parameter based on branch decompositions.
    We show that, although \emph{sim-width} and induced $P_3$-packing treewidth are incomparable, graphs of bounded sim-width that exclude all \emph{$H$-obstructions}---certain graphs that force large induced $H$-packing treewidth---have bounded induced $H$-packing treewidth.
    This simultaneously generalizes and resolves questions posed by Abrishami et al. [SIAM J. Discrete Math., 2025] and Brettell et al. [European J. Comb., 2025].
\end{itemize}

\end{abstract}

\maketitle

\thispagestyle{empty}
{
 \hypersetup{linkcolor=red!50!black}
 \tableofcontents
}

\section{Introduction}
\subsection{Motivation}
Graph decompositions and their associated width parameters are central tools
for understanding the structure of graphs and for designing algorithms on
them.  The canonical success story is that of tree decompositions and
treewidth, discovered independently, in equivalent forms, by Bertel\`e and
Brioschi~\cite{bertele1972nonserial}, Halin~\cite{halin1976sfunctions}, and
Robertson and Seymour~\cite{robertson1984graph}.  An equivalent formulation in
terms of partial $k$-trees was studied by Arnborg and
Proskurowski~\cite{arnborg1989linear}.
Intuitively, a tree decomposition represents a graph by overlapping sets of
vertices, called \emph{bags}, arranged along a tree: every edge is contained in
a bag, and the bags containing any fixed vertex form a connected subtree.
Treewidth measures how small the bags can be made.  When they have bounded
size, the graph is sufficiently tree-like that many otherwise difficult
problems can be solved by dynamic
programming~\cite{robertson1986graph,bodlaender1988dynamic,
cygan2015parameterized}.

Bounded treewidth, however, is inherently related to sparse graphs:
in every class of bounded treewidth, every graph has only a linear number of edges.
To obtain useful analogues for dense graphs, one
can allow bags to be large while requiring the graphs they induce to have
additional structure.  This viewpoint led Yolov and, independently, Dallard,
Milani\v{c}, and \v{S}torgel to the \emph{tree-independence number}, denoted by
$\treealpha(G)$~\cite{yolov2018minor,dallard2024treewidthII}.  It is defined
just like treewidth, except that the cost of a bag is the maximum size of an
independent set it contains, rather than its cardinality.  Tree-independence
number has since been studied extensively, both structurally and
algorithmically~\cite{DBLP:journals/jctb/DallardMS24a,abrishami2024tree,
dallard2024treewidth,chudnovsky2026tree,HMV25,
DBLP:journals/corr/abs-2601-15861}.
Nevertheless, a drawback of this parameter is that it remains unbounded even
on very simple graphs, namely complete bipartite graphs (bicliques).  Indeed,
$\treealpha(K_{t,t})=t$.

To accommodate bicliques, Yolov~\cite{yolov2018minor} introduced a relaxation
of tree-independence number, which he called \emph{minor-matching
hypertreewidth} and which is now typically referred to as \emph{induced
matching treewidth}, denoted by
$\treemu(G)$~\cite{DBLP:journals/jcss/LimaMMORS26}.
Here the cost of a bag is the largest size of an induced matching such that
every edge of the matching has an endpoint in the bag; we emphasize that the
edges of the matching need not be contained in the bag.
The parameter minimizes the maximum cost over all tree decompositions.
In particular, complete bipartite graphs have induced matching treewidth one.

In our previous work~\cite{nikabadi2026induced}, we generalized these ideas by
introducing \emph{induced packing treewidth}.  Let $\cH$ be a family of graphs.
An induced $\cH$-packing in $G$ is a collection of pairwise anticomplete induced
subgraphs of $G$, each isomorphic to a member of $\cH$.  The induced
$\cH$-packing treewidth of $G$, denoted by $\treepi_{\cH}(G)$, is obtained by minimizing, over all tree
decompositions of $G$, the maximum number of members of such a packing that can
all intersect one bag.
Thus tree-independence number and induced matching
treewidth are precisely the cases $\cH=\{P_1\}$ and $\cH=\{P_2\}$,
respectively.
If $\cH$ is a singleton $\{H\}$, we write $\treepi_H(G)$ instead of $\treepi_{\{H\}}(G)$.

The framework of induced packing treewidth brings together two very actively studied areas of structural and algorithmic graph theory: structured graph decompositions and graph
classes defined by forbidden induced subgraphs.  For a graph $H$, a graph is
\emph{$H$-free} if it contains no induced subgraph isomorphic to $H$; for a
family $\cH$, it is \emph{$\cH$-free} if it is $H$-free for every
$H\in\cH$.
Equivalently, the induced $\cH$-packing treewidth is zero exactly
for $\cH$-free graphs.
Algorithmic applications of boundedness of induced packing treewidth are discussed in the paper introducing this family of parameters~\cite{nikabadi2026induced}.

\subsection{Our results}
In this paper we focus instead on structural questions:
we investigate how induced packing treewidth interacts with classical graph structure and with other width parameters.
\paragraph{Excluding a biclique}
Our first main result concerns the relationship between induced packing
treewidth and tree-independence number.  To put it in context, recall
that $\treemu(G)\leq\treealpha(G)$ for every graph $G$, where
$\treemu$ denotes induced matching treewidth.  Conversely,
tree-independence number cannot be bounded by a function of induced
matching treewidth: $\treemu(K_{t,t})=1$, whereas
$\treealpha(K_{t,t})=t$.  This led to the conjecture that large
bicliques are the only obstruction~\cite{DBLP:journals/jcss/LimaMMORS26};
it was recently confirmed
in~\cite{abrishami2025excluding}:

\begin{theorem}[Abrishami, Briański, Czyżewska, McCarty, Milanič, Rzążewski, and Walczak~\cite{abrishami2025excluding}]\label{thm:treemu-biclique}
For any positive integers $\mu$ and $t$, there is an integer $c(\mu, t)$ such that every $K_{t,t}$-free graph $G$ with $\treemu(G) \leq \mu$ satisfies $\treealpha(G) \leq c(\mu, t)$.
\end{theorem}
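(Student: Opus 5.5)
The plan is to go through the balanced-separator characterization of tree-independence number. By the known duality, as in the work of Dallard, Milanič and Štorgel and of Chudnovsky et al., it suffices to show the following. There is a constant $s=s(\mu,t)$ such that, for every induced subgraph $G'$ of $G$ and every weight function on $V(G')$, there is a balanced separator $X$ of $G'$ with $\alpha(G'[X])\le s$. Here balanced means that every component of $G'-X$ carries at most half the weight. The property "$K_{t,t}$-free with $\treemu\le\mu$" is closed under taking induced subgraphs, so this covers every subgraph that arises.

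First, take a tree decomposition $(T,\beta)$ of $G$ with $\treemu\le\mu$. A standard centroid argument gives a bag $B=\beta(x)$ that is a balanced separator. The cost of this separator is measured in induced matchings: every induced matching with each edge touching $B$ has at most $\mu$ edges. The task is therefore to replace $B$ by a separator of bounded independence number.

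Next, take a maximum independent set $I\subseteq B$. If $I$ is small we are done, so assume $|I|$ is huge. Each $v\in I$ that is not isolated has a neighbour outside $I$. Pick one such neighbour $u_v$ for each $v$. The bound on induced matchings touching $B$, combined with Ramsey's theorem, shows that the bipartite graph between $I$ and the chosen neighbours is very dense: among any $\mu+1$ pairs $vu_v$, some two must interact by an edge $u_vu_w$, $vu_w$, or $wu_v$. Applying Ramsey repeatedly, and using $K_{t,t}$-freeness to forbid the complete bipartite outcome, we expect to find a small set $Z$ of vertices that dominates almost all of $I$. Since $\alpha$ of a neighbourhood is not controlled, dominating alone is not enough. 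The actual plan is iterative: replace $B$ by $(B\setminus I')\cup N$-type sets, in the style of the "central bag" and domination techniques from the $(P_t,K_{t,t})$-free papers. In that style, one uses a bounded number of vertices with their neighbourhoods and proves that such a set has bounded independence number. This relies on the fact that a large independent set inside a neighbourhood, together with $K_{t,t}$-freeness and $\treemu\le\mu$, gives a large induced matching touching one bag.

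The main obstacle is exactly that last step. One must show that in a $K_{t,t}$-free graph with $\treemu\le\mu$, the neighbourhood of a vertex, or of a bounded set, cannot contain an arbitrarily large independent set that is spread over many bags in a way that blocks balanced separators. My first attempt would be a Ramsey/Kővári–Sós–Turán argument on the bipartite graph between a large independent set $I$ and its neighbourhood. If every vertex of $N(I)$ sees few vertices of $I$, we extract a large induced matching inside a single bag; this needs care, using the centroid bag and Helly-type properties of subtrees. If some vertex sees many vertices of $I$, we recurse inside its neighbourhood; $K_{t,t}$-freeness bounds the depth of this recursion to about $t$ levels, giving a separator with $\alpha$ bounded by a tower-type function of $\mu$ and $t$.
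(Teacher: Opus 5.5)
Your reduction to balanced separators (via \cref{lem:sep-vs-ta}) and your choice of a bag $B$ that is a balanced separator are both right. After that, however, the proposal has no working mechanism, and the concrete facts it relies on are false.

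First, a bounded set together with its neighbourhood need not have bounded independence number. Likewise, a large independent set in a neighbourhood does not force a large induced matching touching a bag. The star $K_{1,n}$ is $K_{2,2}$-free with $\treemu=1$, yet the neighbourhood of its centre is an independent set of size $n$.

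Second, your Ramsey/domination step and your ``few'' branch both fail on the following graph. Take a clique $U=\{u_1,\dots,u_n\}$ and attach a pendant vertex $v_i$ to each $u_i$.
\begin{itemize}
\item The graph is $C_4$-free, hence $K_{t,t}$-free for $t\ge2$.
\item Any two disjoint edges are joined by an edge inside $U$, so even the one-bag decomposition has induced matching number $1$.
\item $I=\{v_1,\dots,v_n\}$ is a large independent set in that bag, and every vertex of $N(I)=U$ sees exactly one vertex of $I$.
\item No small set dominates many vertices of $I$, and there is no induced matching of size $2$.
\end{itemize}
In both examples the right separator (the centre, respectively the clique $U$) is a set hitting \emph{all} edges at $B$. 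Choosing one neighbour $u_v$ per $v\in I$ throws exactly this information away. The recursion ``of depth about $t$'' is not supported by anything in the proposal.

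The missing ingredient is a hitting-set theorem, namely \cref{lem:hit-vs-anti} of Hajebi and Spirkl. The paper does not reprove this cited theorem. It is, however, the special case of \cref{thm:going-to-tree-alpha} for the path $P_2$ (as $\treepi_{P_2}=\treemu$), and for $P_2$ that proof takes only a few lines:
\begin{enumerate}
\item The edges of $G$ meeting $B$ form a $2$-system with no $\mu+1$ pairwise anticomplete members, since these would form an induced matching of size $\mu+1$ touching $B$.
\item As $G$ is $K_{t,t}$-free, \cref{lem:hit-vs-anti} gives a set $X$ meeting all of these edges with $\alpha(G[X])<c:=c_{\ref{lem:hit-vs-anti}}(t,2,\mu+1)$.
\item In $G-X$, every vertex of $B\setminus X$ is isolated. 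Every other component avoids $B$, so it lies in a component of $G-B$.
\item Adding to $X$ the at most one isolated vertex of weight more than $\wei(G)/2$ gives a balanced separator $Z$ with $\alpha(G[Z])\le c$. Hence $\treealpha(G)\le 5c$ by \cref{lem:sep-vs-ta}.
\end{enumerate}
For longer paths the paper additionally needs \cref{lem:clean-root-separator}; for induced matchings, cleanliness already isolates $B\setminus X$.

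So the entire difficulty you are trying to handle by hand is what \cref{lem:hit-vs-anti} encapsulates. Without it, or an equivalent result, your outline does not yield a proof.
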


Induced packing treewidth gives a broader setting for this question.
For every graph $G$,
$\treepi_{P_3}(G)\leq\treemu(G)\leq\treealpha(G)$; see also the
structural properties in~\cref{lem:structural-package}.  We ask
whether the same biclique obstruction governs the gap between bounded
induced $P_t$-packing treewidth and bounded tree-independence number.
The following theorem answers this question affirmatively and extends
\cref{thm:treemu-biclique}; its proof appears in~\cref{sec:no-Ktt}.

\begin{restatable}{theorem}{goingtotreealpha}\label{thm:going-to-tree-alpha}
For all $a,t\in\mathbb{N}$ and $k\in\mathbb{N}\cup\{0\}$, there is a constant $\zeta=\zeta(a,t,k)$ such that every $K_{a,a}$-free graph $G$ with $\treepi_{P_t}(G)\leq k$ satisfies $\treealpha(G)\leq \zeta$.
\end{restatable}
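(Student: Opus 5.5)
We argue by induction on $t$, with base case $t=2$. Induced $P_2$-packing treewidth coincides with induced matching treewidth, and \cref{thm:treemu-biclique} then gives $\zeta(a,2,k)=c(k,a)$. The case $t=1$ is trivial, since $\treepi_{P_1}=\treealpha$.

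For the inductive step, fix $t\ge 3$ and a $K_{a,a}$-free graph $G$ with $\treepi_{P_t}(G)\le k$, and let $(T,\{B_x\})$ be a tree decomposition witnessing this bound. The plan is to show that $G$ also has bounded induced $P_{t-1}$-packing treewidth, i.e.\ $\treepi_{P_{t-1}}(G)\le f(a,t,k)$, and then apply induction. Knowing the tree decomposition alone does not suffice: a bag may meet many pairwise anticomplete induced $P_{t-1}$'s that cannot be extended to pairwise anticomplete $P_t$'s. We therefore aim for a more robust intermediate statement. We take a large anticomplete family $\mathcal{P}=\{Q_1,\dots,Q_m\}$ of induced $P_{t-1}$'s, each meeting a bag $B$, and derive a contradiction with $K_{a,a}$-freeness or with the $P_t$-packing bound.

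The first tool is a Ramsey-type extension lemma. For each $Q_i$, consider its endpoints and their neighbours outside $Q_i$. If many of the $Q_i$ admit an extension to an induced $P_t$ by a private neighbour $y_i$ of an endpoint, we clean up the resulting $P_t$'s. We apply Ramsey's theorem to the bipartite adjacency pattern between the $y_i$'s and the other $Q_j$'s and among the $y_i$'s. Since $G$ is $K_{a,a}$-free, a Kővári–Sós–Turán/Ramsey argument on the bipartite graph between $\{y_i\}$ and $\bigcup Q_j$ shows that we can pass to a large subfamily in which the $y_i$'s are anticomplete to the other paths and pairwise nonadjacent. The key input here is that large complete bipartite patterns are forbidden, and large ``matching-like'' patterns are handled by Ramsey. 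In that subfamily we obtain many anticomplete induced $P_t$'s meeting $B$, contradicting $\treepi_{P_t}(G)\le k$.

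It remains to handle the case where few $Q_i$ extend. Then for most $i$, every neighbour of each endpoint of $Q_i$ lies in $Q_i$ itself or is shared with many other paths. A component containing $Q_i$ that is nonextendable in this sense is essentially ``closed''. I expect this step to be the main obstacle. The plan is to modify the decomposition rather than just bound bags. We define $G'$ by attaching to the components of $G-N[\ldots]$ the degenerate short pieces, or handle them via a separate bound: vertices of $B$ lying on ``isolated short components'' contribute independent sets only within pieces of $G$ that are $P_t$-free locally. For the $P_t$-free pieces we would invoke the result of Hajebi and Spirkl that $(P_t,K_{a,a})$-free graphs have bounded tree-independence number. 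Concretely, I would first try to prove a lemma that shared neighbours of many anticomplete paths produce, by pigeonhole and $K_{a,a}$-freeness, a vertex set forming a $K_{a,a}$, so that sharing is bounded. Then all but boundedly many paths extend, and the first tool applies.

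Combining the two cases bounds $\treepi_{P_{t-1}}(G)$ by a function $f(a,t,k)$. Induction then yields $\zeta(a,t,k)=\zeta(a,t-1,f(a,t,k))$.
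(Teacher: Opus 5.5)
Your base case is fine ($\treepi_{P_2}$ is induced matching treewidth, so the theorem of Abrishami et al.\ applies). The intermediate statement you aim for is even true a posteriori, since $\treepi_{P_{t-1}}\le\treealpha$. The inductive step, however, has a genuine gap. Both of your cases try to derive a contradiction from a large anticomplete family of induced $P_{t-1}$'s meeting a bag $B$ of the \emph{given} decomposition, and no such contradiction exists.

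\emph{The non-extendable case.} A disjoint union of $m$ copies of $P_{t-1}$ is $P_t$-free. Its one-bag decomposition meets $m$ anticomplete $P_{t-1}$'s, and none of them extends, since there are no outside neighbours at all. So ``all but boundedly many paths extend'' is false, and no sharing lemma can rescue it.

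\emph{The extendable case.} Take a clique on $y_1,\dots,y_m$ and attach to each $y_i$ an end of a private induced path $Q_i$ on $t-1$ vertices.
\begin{itemize}
\item The graph is chordal, hence $K_{2,2}$-free.
\item Every induced $P_t$ contains some $y_i$, so any two induced $P_t$'s touch, and $\pi_{P_t}(G,B)\le 1$ for every $B$.
\item Yet the bag consisting of the $y_i$'s and their neighbours on the $Q_i$'s (with the rest of each $Q_i$ in bags hanging off it) meets $m$ anticomplete $P_{t-1}$'s, each extendable by its private neighbour $y_i$.
\end{itemize}
Ramsey applied to the $y_i$'s returns a clique, which $K_{a,a}$-freeness does not forbid, so your cleanup to pairwise nonadjacent $y_i$'s fails. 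The same example, or a single vertex adjacent to many paths, also refutes your proposed lemma that shared neighbours force a $K_{a,a}$.

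Consequently all the content lies in ``modify the decomposition'', and you give no construction for it. Your plan of treating locally $P_t$-free pieces via the $(P_t,K_{a,a})$-free theorem does not say how those pieces fit into a decomposition of $G$. Note that in the clique example a \emph{different} decomposition (the clique alone as the central bag) does have small $\pi_{P_{t-1}}$; finding such decompositions in general is the whole difficulty.

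The paper avoids both the induction on $t$ and any attempt to bound $\treepi_{P_{t-1}}$. By the Chudnovsky--Hajebi--Lokshtanov--Spirkl lemma, it suffices to produce, for every vertex-weighting, a balanced separator of bounded independence number. The argument runs as follows.
\begin{itemize}
\item Take a bag $B$ that is a balanced separator.
\item Use $\pi_{P_t}(G,B)\le k$ and the Hajebi--Spirkl hitting lemma to delete a set $X$ of bounded independence number meeting every induced $P_t$ through $B$.
\item In the $(B\setminus X,P_t)$-clean graph, Gy\'arf\'as paths started in $B\setminus X$ are forced to be short. Together with the refined Hajebi--Spirkl lemma, they let you trade $B\setminus X$ for a balanced separator of bounded independence number.
\end{itemize}
Here $K_{a,a}$-freeness enters through the exclusion of large pure subdivisions of bicliques, which comes from dominated balanced separators, not through local Ramsey cleanup around a bag. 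Some global separator-type argument of this kind is what your proposal is missing.
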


\Cref{thm:going-to-tree-alpha} has a further consequence.
Recently, Hajebi and Spirkl~\cite{hajebi2026tree} proved that
a hereditary graph class defined by finitely many forbidden
induced subgraphs has bounded tree-independence number if and
only if it is $(\tw,\omega)$-bounded\footnote{A graph class is
$(\tw,\omega)$-bounded if there is a function
$f:\mathbb{N}\to\mathbb{N}$ such that
$\tw(H)\leq f(\omega(H))$ for every non-null induced subgraph
$H$ of every graph in the class.}.
In particular, they confirmed a conjecture of Dallard et al.\
(see~\cite[Conjecture~1.3]{dallard2024treewidth}) that, for all
$a,t\in\mathbb{N}$, $(K_{a,a},P_t)$-free graphs have bounded
tree-independence number.

For $k=0$, \cref{thm:going-to-tree-alpha} recovers this
path-exclusion result, since $\treepi_{P_t}(G)=0$ if and only
if $G$ is $P_t$-free. For arbitrary fixed $k$, our theorem
extends this conclusion by replacing global $P_t$-exclusion
with bounded induced $P_t$-packing treewidth.
It also gives a counterpart of the Hajebi--Spirkl equivalence:
bounded tree-independence number and $(\tw,\omega)$-boundedness
are equivalent for every hereditary class of bounded induced
$P_t$-packing treewidth, without requiring a finite
forbidden-subgraph characterization.

\begin{restatable}{corollary}{treepitwomega}\label{cor:treepi-tw-omega}
Let $t\in\mathbb{N}$, and let $\mathcal{G}$ be a hereditary
class of bounded induced $P_t$-packing treewidth.
The following are equivalent:
\begin{enumerate}
    \item $\mathcal{G}$ has bounded tree-independence number;
    \item $\mathcal{G}$ is $(\tw,\omega)$-bounded;
    \item there exists $a\in\mathbb{N}$ such that every graph
    in $\mathcal{G}$ is $K_{a,a}$-free.
\end{enumerate}
\end{restatable}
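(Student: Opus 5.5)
We prove the cycle of implications (1)$\Rightarrow$(2)$\Rightarrow$(3)$\Rightarrow$(1). Only the last implication is new; it is exactly \cref{thm:going-to-tree-alpha}. Throughout, fix $k$ with $\treepi_{P_t}(G)\le k$ for all $G\in\mathcal{G}$.

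For (1)$\Rightarrow$(2), suppose $\treealpha(G)\le c$ for all $G\in\mathcal{G}$. Tree-independence number is monotone under induced subgraphs (restrict the bags of a decomposition), so every non-null induced subgraph $H$ of a graph in $\mathcal{G}$ also satisfies $\treealpha(H)\le c$; hereditarity of $\mathcal{G}$ gives the same thing. We then use the standard fact that a bag with independence number at most $c$ and clique number at most $\omega$ has size less than $R(c+1,\omega+1)$ by Ramsey's theorem. Applying this to an optimal decomposition gives $\tw(H)\le R(c+1,\omega(H)+1)-2$, which is a function of $\omega(H)$ alone.

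For (2)$\Rightarrow$(3), let $f$ witness $(\tw,\omega)$-boundedness, and suppose $K_{a,a}$ appears as an induced subgraph of some graph in $\mathcal{G}$ for every $a$. Since $\omega(K_{a,a})=2$, we would need $\tw(K_{a,a})=a\le f(2)$ for all $a$, which is impossible. Taking $a=f(2)+1$ shows that every graph in $\mathcal{G}$ is $K_{a,a}$-free. Here "$K_{a,a}$-free" means no induced $K_{a,a}$, matching the paper's convention.

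For (3)$\Rightarrow$(1), every $G\in\mathcal{G}$ is $K_{a,a}$-free and has $\treepi_{P_t}(G)\le k$. By \cref{thm:going-to-tree-alpha}, $\treealpha(G)\le\zeta(a,t,k)$, a bound uniform over the class.

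There is no substantial obstacle beyond invoking \cref{thm:going-to-tree-alpha}. The only points needing care are the Ramsey bound on bag sizes and the fact that $\tw(K_{a,a})=a$ is unbounded.
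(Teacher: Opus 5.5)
Your proof is correct and follows the same route as the paper. You use the Ramsey bound on bag sizes for (1)$\Rightarrow$(2), which the paper only cites from Dallard, Milani\v{c}, and \v{S}torgel; the facts $\tw(K_{a,a})=a$ and $\omega(K_{a,a})=2$ for (2)$\Rightarrow$(3), where the paper invokes heredity to put $K_{a,a}$ itself in $\mathcal{G}$ but you rightly note the definition of $(\tw,\omega)$-boundedness already covers induced subgraphs; and a direct application of \cref{thm:going-to-tree-alpha} for (3)$\Rightarrow$(1).
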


The proof of~\cref{cor:treepi-tw-omega} is given in~\cref{sec:no-Ktt}, together with
that of~\cref{thm:going-to-tree-alpha}. The latter combines a recent result of Hajebi and Spirkl~\cite{hajebi2026tree}, the Gy\'arf\'as path argument (\cref{lem:gyarfas-path}), and a result of Chudnovsky, Hajebi,
Lokshtanov, and Spirkl~\cite{chudnovsky2026tree}.

\paragraph{$\chi$-boundedness}

A second conjecture asserts that classes of graphs with bounded
$\treemu$ are $\chi$-bounded; that is, their chromatic number is
bounded by a function of their clique number.  This conjecture was
recently confirmed in~\cite{DBLP:journals/jcss/LimaMMORS26,
abrishami2025excluding}.

\begin{theorem}[Abrishami, Briański, Czyżewska, McCarty, Milanič, Rzążewski, and Walczak~\cite{abrishami2025excluding}]\label{chibounded-treemu}
Every class of graphs with bounded $\treemu$ is $\chi$-bounded.
\end{theorem}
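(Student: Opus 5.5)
We would argue by induction on the clique number $\omega$. For $\omega=1$ there is nothing to prove. Fix $\mu$, and assume that every graph $G'$ with $\treemu(G')\le\mu$ and $\omega(G')<\omega$ satisfies $\chi(G')\le f(\omega-1)$. Note that neighbourhoods have smaller clique number and that $\treemu$ is monotone under taking induced subgraphs. Hence $\chi(G[N(v)])\le f(\omega-1)$ for every vertex $v$ of a graph $G$ with $\treemu(G)\le\mu$ and $\omega(G)=\omega$.

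\textbf{Step 1: local domination lemma.} Let $B$ be a bag of a tree decomposition of $G$ of $\treemu$-cost at most $\mu$. Greedily choose a maximal induced matching $M$ each of whose edges has an endpoint in $B$. Then $|M|\le\mu$. By maximality, every non-isolated vertex of $B$ lies in $N[V(M)]$. Indeed, if some $u\in B$ with a neighbour $w$ were anticomplete to $V(M)$, and $w$ were anticomplete to $V(M)$ as well, then $M+uw$ would be a larger such matching. So the vertices of $B$ are either isolated or adjacent to one of at most $2\mu$ vertices in $V(M)$, and the isolated ones are anticomplete to $V(M)$ and to $u$; handling this correctly needs $N[N[V(M)]]$. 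In either case $B\subseteq N^{2}[X]$ for a set $X$ with $|X|\le 2\mu$. Combining this with the induction hypothesis bounds the chromatic number of the first neighbourhood of $X$ by $2\mu\,(f(\omega-1)+1)$.

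\textbf{Step 2: Gyárfás-type layering.} Bounded chromatic number on every bag does not by itself bound $\chi(G)$, since tree-chromatic number is weaker than $\chi$. So we would combine Step 1 with the standard levelling trick. Take a connected $G$ with $\chi(G)$ huge and BFS layers $L_0,L_1,\dots$ from a vertex. Some layer $L_i$ has $\chi(G[L_i])\ge\chi(G)/2$. Pass to a connected induced subgraph $H\subseteq G[L_i]$ of large chromatic number in which every vertex has a neighbour in $L_{i-1}$, and keep $G[L_{<i}]$ connected. We then aim for a second-neighbourhood bound: show that in a graph of bounded $\treemu$, $\chi$ of a second neighbourhood, i.e.\ of the set $N^2(v)$ of vertices at distance exactly two from $v$, is bounded by a function of $\mu$ and $f(\omega-1)$. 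Then iterate the Gyárfás path argument (\cref{lem:gyarfas-path} style) to show that unbounded $\chi$ forces either a long induced path of large-$\chi$ pieces or an induced matching of size $\mu+1$ all of whose edges meet a single bag. The latter is the contradiction.

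\textbf{Step 3: the main obstacle.} The key difficulty is converting the local information of Step 1 into a global contradiction. Applying a separator argument to $H$ (choosing a bag $B$ whose deletion leaves only components of chromatic number at most $\chi(H)/2$) gives the recursion
\[
\chi(H)\le\chi\bigl(H[N[X]]\bigr)+\max_C\chi(C).
\]
Here the additive loss must not accumulate over unboundedly many levels. What we would try first is to choose $H$ minimal with $\chi(H)\ge c$ and exploit the BFS parent layer $L_{i-1}$ to find, inside one bag, $\mu+1$ pairwise anticomplete edges. Each such edge would be taken from a distinct high-$\chi$ component of $H-N[X]$, together with its parent in $L_{i-1}$. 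Making these edges pairwise anticomplete while all touching one bag is the delicate point. If this fails, the fallback is to prove an intermediate Ramsey-type statement: large $\chi$ together with bounded $\omega$ and bounded $\treemu$ yields either large $\chi$ in a $K_{t,t}$-free induced subgraph (where \cref{thm:treemu-biclique} and Ramsey give bounded bag size, hence bounded treewidth and bounded $\chi$), or a controllable biclique structure whose two sides have smaller clique number.
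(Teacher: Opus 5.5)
Your proposal is a plan rather than a proof. The step that turns the local condition on bags into a global bound on $\chi$ is exactly what Steps~2 and~3 leave open.

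Step~1 is fine after your correction; indeed $B\setminus N[V(M)]$ is then independent, so every bag has bounded $\chi$. But, as you say, this is useless by itself.

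In Step~2, the first outcome of your dichotomy, a long induced path, is no contradiction: graphs of bounded $\treemu$ contain arbitrarily long induced paths. A bound on $\chi$ of second neighbourhoods cannot do the work on its own either. In graphs of girth at least $6$ every second neighbourhood is independent, yet $\chi$ is unbounded. So everything rests on producing $\mu+1$ pairwise anticomplete edges meeting a single bag, and you give no mechanism for this.

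The recursion in Step~3 would need a bag that is balanced with respect to $\chi$. \cref{lem:balanced-bag} does not provide one, because $\chi$ is not additive over anticomplete parts. With vertex-balanced bags the additive losses accumulate, as you note. The fallback is not developed either: the ``controllable biclique'' alternative is never specified, and nothing shows how it leads to a contradiction or to a smaller instance.

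The missing ideas appear in the paper's proof of \cref{thm:chi-bounded-paths}, which gives this statement for $t=2$ because $\treepi_{P_2}=\treemu$.
\begin{enumerate}
\item Induct on $\mu$ as well as on $\omega$. Your induction on $\omega$ alone only yields locality, that is, neighbourhoods of bounded $\chi$.
\item Use the decomposition to split one heavy layer. By \cref{lem:pt-tail}, pass to a layering whose first two layers are single vertices $v_0$ and $x_1$, with a layer $L_i$, $i\geq 3$, of huge chromatic number. Every vertex of $L'=L_0\cup\dots\cup L_{i-2}$ lies on an edge inside $L'$, and $L'$ is anticomplete to $L_i$. Let $T'$ be the subtree spanned by the occurrence subtrees of $L'$.
\item The vertices of $L_i$ whose subtrees meet $T'$ induce a graph with $\treemu\leq\mu-1$. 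To see this, restrict the decomposition to $T'$: each bag there contains an endpoint of an edge anticomplete to $L_i$, and that edge extends any rooted induced matching. So the $\mu$-induction bounds their $\chi$.
\item For a component $C$ of the remaining vertices of $L_i$, the subtree $T_C$ avoids $T'$. Every vertex of $D=N(C)\cap L_{i-1}$ has a subtree meeting both, hence containing the path between them. Thus $D$ lies in one bag, so at most $\mu$ pairwise anticomplete edges can meet $D$.
\end{enumerate}

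This resolves your ``delicate point''. Split $D$ into boundedly many independent classes and order each class. Give every vertex of $C$ its first neighbour in that order as parent. Then extract parent--child edges with parents taken from last to first, each time discarding the neighbourhood of the chosen edge. A child is nonadjacent to every parent earlier than its own, so the extracted edges are pairwise anticomplete. Each step costs boundedly many colours by locality (\cref{lem:rooted-pt,lem:pt-dominated-set}). Both parts of $L_i$ then have bounded $\chi$, a contradiction.
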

Our second main result extends~\cref{chibounded-treemu} to induced
packing treewidth.  In particular, it shows that bounded induced
$H$-packing treewidth implies $\chi$-boundedness when $H$ is a path
or a star.

\begin{restatable}{theorem}{chibounded}\label{thm:chi-bounded}
Let $t,d\in \mathbb{N}$ and $k\in\mathbb N\cup\{0\}$. The following statements hold:
\begin{itemize}
    \item The class
    $\{G \mid \treepi_{P_t}(G)\leq k\}$
    is $\chi$-bounded.
    \item The class $\{G \mid \treepi_{K_{1,d}}(G)\leq k\}$
    is $\chi$-bounded.
\end{itemize}
\end{restatable}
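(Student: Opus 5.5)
We argue by induction on the clique number $\omega$. Since the class is hereditary, we proceed by strong induction on $k$ as well, and the goal is a bound $\chi(G)\le f(\omega(G))$ with $f$ depending on $t,k$ (respectively $d,k$). For $k=0$ the class is $P_t$-free (respectively $K_{1,d}$-free). $P_t$-free graphs are $\chi$-bounded by Gyárfás' theorem. $K_{1,d}$-free graphs satisfy $\chi\le \Delta+1\le R(d,\omega+1)$, because a neighbourhood of size at least $R(d,\omega+1)$ contains either $d$ independent vertices, giving an induced $K_{1,d}$, or a clique of size $\omega+1$.

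\textbf{Induction step.} We use the standard Gyárfás-type leveling argument. Take a connected $G$ with $\treepi_H(G)\le k$, where $H\in\{P_t,K_{1,d}\}$, fix a vertex $v$, and form BFS layers $L_0,L_1,\dots$. It suffices to bound $\chi(G[L_i])$ for each $i$, since colouring even and odd layers with disjoint palettes costs only a factor $2$. Fix a layer $L=L_i$ and take a tree decomposition $(T,\beta)$ of $G$ witnessing $\treepi_H(G)\le k$. The key claim is the following: if $\chi(G[L])$ is huge, then we can find an induced copy $H'$ of $H$ and a subgraph $G'\subseteq G[L]$ anticomplete to $H'$ that still has huge chromatic number. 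Moreover, $H'$ can be chosen to intersect every bag meeting $V(G')$ in a way that forces $\treepi_H(G')\le k-1$. To realise the intersection property, we choose $H'$ to pass near the "centre" of the decomposition. Concretely, we pick a node $x$ of $T$ such that every component of $G[L]-\beta(x)$ has small $\chi$, or else we recurse into one heavy component. Since $\chi$ is subadditive, some piece of the form $N(\cdot)$ of a few vertices in $\beta(x)$ carries large $\chi$. In that piece we build $H'$: a path in the case $H=P_t$, grown through the earlier layers toward the root in Gyárfás' style, or a star at a vertex of $\beta(x)$ with $d$ independent neighbours in the case $H=K_{1,d}$. These independent neighbours exist because $\chi$ of the neighbourhood exceeds $f(\omega-1)$, so it contains an independent set of any needed size after Ramsey-type bounds.

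\textbf{Reducing $k$.} We then take $G'$ to be the set of vertices of $L$ that are nonadjacent to $H'$, restricted to the components that live in subtrees of $T$ whose bags all contain a vertex of $H'$ along the relevant path. To make this work we will attach $H'$ at the separator bag. Then any $H$-packing of $G'$ meeting a bag of $G'$ extends by $H'$ to a larger packing in $G$, which gives $\treepi_H(G')\le k-1$. The chromatic number lost in passing from $G[L]$ to $G'$ is controlled by the neighbourhoods of the $O(t)$ or $O(d)$ vertices of $H'$. Each such neighbourhood has $\chi\le f(\omega-1)$, because it induces a graph with smaller clique number, and we apply induction on $\omega$ there. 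Hence $\chi(G')\ge \chi(G[L])-O(t+d)\,f(\omega-1)$, and induction on $k$ then bounds $\chi(G')$. This yields a bound on $\chi(G[L])$.

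\textbf{Main obstacle.} The hardest step is ensuring that $H'$ intersects every bag relevant to $G'$. Anticompleteness alone does not reduce the packing count, because a bag of $G'$ may miss $H'$ entirely. To handle this, we first try choosing $x$ as a balanced-separator node with respect to $\chi$, and we use the connectivity of the earlier layers $L_0\cup\dots\cup L_{i-1}$. This connectivity lets us route a connector from the heavy component back through $\beta(x)$, so that $H'$ (extended along this route, which for paths is already how Gyárfás' argument builds it) lies in every bag on the $T$-path between $x$ and the subtree hosting $G'$. If this fails, the fallback is to bound instead a local parameter on the subtree.
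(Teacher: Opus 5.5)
Your sketch has a genuine gap, and it sits exactly at the step you call the main obstacle. A single induced copy $H'$ of $H$ has at most $\max\{t,d+1\}$ vertices, so it meets only the bags lying in boundedly many occurrence subtrees $T_v$. Nothing forces the bags that meet your $G'$ to be among them. Once you ``extend $H'$ along a route'', it is no longer of bounded size, and your loss estimate $\chi(G')\ge\chi(G[L])-O(t+d)f(\omega-1)$ no longer applies. The ``balanced separator with respect to $\chi$'' step is also unsupported, since bags are unbounded.

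The paper avoids this by letting the witnessing copy depend on the bag. For the heavy layer $L_i$ it takes $L'=L_0\cup\dots\cup L_{i-2}$. This set is connected, so $T'=\bigcup_{v\in L'}T_v$ is a subtree, and it is anticomplete to $L_i$ at no chromatic cost. The paper arranges that \emph{every} vertex of $L'$ lies in an induced copy of $H$ inside $G[L']$. Then the vertices of $L_i$ whose occurrence subtrees meet $T'$ induce a graph of induced $H$-packing treewidth at most $k-1$, by restricting the decomposition to $T'$.

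Securing this covering property needs preparation you do not have:
\begin{itemize}
\item For paths, the layering is rooted at an end of an induced $P_t$ grown by the localized Gy\'arf\'as construction, so that the first $t$ layers are singletons. A BFS from an arbitrary vertex gives no such guarantee when the heavy layer is near the root.
\item For stars, one passes to a vertex-critical subgraph. There, large minimum degree together with $f(\omega-1)$-colourable neighbourhoods gives every vertex $2d-2$ pairwise nonadjacent neighbours, and one layers from a copy of $K_{1,d}$.
\end{itemize}
It is the degree that is large here. The chromatic number of a neighbourhood is at most $f(\omega-1)$ by your own induction hypothesis, so your stated reason for the star's independent neighbours is wrong.

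The second, more serious omission is that you discard the vertices of the layer whose subtrees are not covered, even though they may carry essentially all of $\chi(G[L])$. The paper needs a separate ingredient for them. Let $C$ be a component of the uncovered part of $L_i$ and $D=N(C)\cap L_{i-1}$. Every vertex of $D$ has an occurrence subtree meeting both $T'$ and $\bigcup_{v\in C}T_v$, and these two are disjoint. Hence $D$ lies in the single bag at the $T'$-end of the tree path joining them, so $\pi_H(G,D)\le k$.

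A dominated-set lemma then bounds $\chi(C)$:
\begin{itemize}
\item $G[D]$ has bounded chromatic number. A maximal anticomplete family of copies of $H$ in $G[D]$ has at most $k$ members, and $G[D]$ is $H$-free outside their closed neighbourhood.
\item Take each independent colour class of $D$ in an arbitrary order, and partition the part of $C$ it dominates by first neighbour.
\item From this partition, extract pairwise anticomplete copies of $H$ one at a time. Each copy is rooted at a vertex of the class and built inside a heavy suffix of the partition, at a cost of boundedly many colours per copy.
\end{itemize}
So large $\chi(C)$ would yield a $D$-rooted packing of size $k+1$, a contradiction. Without an argument of this kind (your unspecified ``fallback''), the induction does not close.
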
 

\begin{figure}[t]
	\centering
	\includegraphics[width=0.8\linewidth]{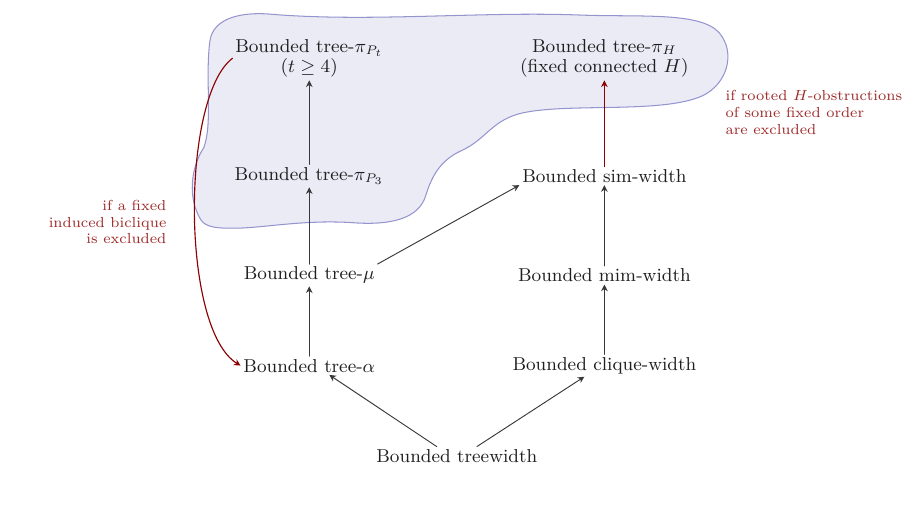}
    \caption{Hierarchy of graph classes related to the ones studied in this paper~\cite{BMPY25}, with newly introduced classes highlighted.}
    \label{fig:hierarchy}
\end{figure}

\Cref{thm:chi-bounded} suggests the following local analogue of the
Gy\'arf\'as--Sumner conjecture, which states that, for every tree $F$,
the class of $F$-free graphs is $\chi$-bounded~\cite{gyarfas1975ramsey,
sumner1981subtrees}.  The
Gy\'arf\'as--Sumner conjecture is known for several families of trees
but remains open in general; see~\cite[Section~3]{scott2020survey} for
a survey.
\begin{conjecture}[Local Gy\'arf\'as--Sumner conjecture]\label{conj:local-gs}
	Let $F$ be a forest and let $k\in\mathbb{N}\cup\{0\}$. There is a function $f_{F,k}$ such that every graph $G$ with $\treepi_F(G)\leq k$ satisfies
	$\chi(G)\leq f_{F,k}(\omega(G))$.
\end{conjecture}

\Cref{thm:chi-bounded} proves~\cref{conj:local-gs} when $F$ is a
path or a star.  The conjecture cannot extend to a graph $F$ that
contains a cycle: $F$-free graphs, equivalently graphs of induced
$F$-packing treewidth zero, are not
$\chi$-bounded~\cite{erdos-girth-chromatic-number}.

\medskip

\paragraph{Erd\H{o}s--Hajnal property}
Our third main result concerns the Erd\H{o}s--Hajnal property of
classes of graphs with bounded induced packing treewidth.  A
hereditary class $\cG$ has the \emph{Erd\H{o}s--Hajnal property} if
there exists $\varepsilon>0$ such that every graph $G\in\cG$ has a
clique or an independent set of size at least $|V(G)|^\varepsilon$.
Erd\H{o}s and Hajnal~\cite{erdos1989ramsey} posed the following
conjecture at the intersection of graph Ramsey theory and structural
graph theory.

\begin{conjecture}[Erd\H{o}s--Hajnal]\label[conjecture]{conj:EH}
For every graph $H$, the class of $H$-free graphs has the
Erd\H{o}s--Hajnal property.
\end{conjecture}

The conjecture predicts a local-to-global phenomenon: excluding one
fixed induced subgraph forces a polynomial-size clique or independent
set.  It remains wide open and appears difficult even for small
graphs $H$.  Recent breakthroughs by Chudnovsky, Scott, Seymour, and
Spirkl~\cite{chudnovsky2023erdHos} and by Nguyen, Scott, and
Seymour~\cite{nguyen2026induced}, together with earlier results,
established \cref{conj:EH} for every graph $H$ on at most five
vertices.

We prove the following.

\begin{restatable}{theorem}{EH}\label{thm:EH}
Let $k\in\mathbb N\cup\{0\}$, and let $H$ be a nonempty graph.
Let $\mathcal{G}_{H,k}:=\{G\mid\treepi_H(G)\leq k\}$.
The class $\mathcal{G}_{H,k}$ has the Erd\H{o}s--Hajnal property if
and only if the class of $H$-free graphs has the Erd\H{o}s--Hajnal
property.
\end{restatable}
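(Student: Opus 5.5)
The forward direction is immediate: every $H$-free graph has $\treepi_H(G)=0\le k$, so the class of $H$-free graphs is contained in $\mathcal{G}_{H,k}$, and the Erd\H{o}s--Hajnal property passes to subclasses. For the converse, assume $H$-free graphs have the property with exponent $\varepsilon>0$, and fix $G\in\mathcal{G}_{H,k}$ on $n$ vertices. The plan is to show by induction on $k$ that $G$ has a clique or independent set of size at least $n^{\delta_k}$ for some $\delta_k>0$ depending only on $H,k,\varepsilon$. The base case $k=0$ is exactly the hypothesis.

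For the inductive step, take a tree decomposition of $G$ in which every bag meets at most $k$ members of any induced $H$-packing. Apply the standard balanced-separator lemma for tree decompositions to obtain a bag $B$ such that every component of $G-B$ has at most $n/2$ vertices. We then split into two cases.

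\emph{Case 1: $|B|\ge \sqrt n$.} Consider the induced subgraph $G[B]$. Take a maximal induced $H$-packing $\{H_1,\dots,H_m\}$ among copies of $H$ that intersect $B$; then $m\le k$. Let $X=\bigcup_i N[V(H_i)]$. Every induced copy of $H$ in $G$ that meets $B$ must intersect $X$, since otherwise it would extend the packing. Hence $G[B\setminus X]$ is $H$-free. The difficulty is that $X$ can be large, because the neighbourhoods may be big.

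To handle this, we fix $|V(H)|\le h$. Each $N(v)$ for $v\in V(H_i)$ either has size at least $n^{c}$, or is small. If some neighbourhood $N(v)$ is large, then $G[N(v)]$ has $\treepi_{H}$ at most $k$. The intended gain is that every copy of $H$ inside $N(v)$ is complete to $v$, but to make induction work we want a drop in the parameter. So instead we use the following lemma: \emph{in $G[N(v)]$, a copy of $H$ meeting a bag together with $H_i$...} This does not directly drop $k$. A cleaner route is to look at any bag $B$ and use $B\setminus X$ and $X$: the set $X$ is covered by at most $kh$ closed neighbourhoods. Either some neighbourhood $N(v)\cap B$ has size at least $|B|/(2kh)$, or $|B\setminus X|\ge |B|/2$. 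In the second case we apply the $H$-free hypothesis to $G[B\setminus X]$ and obtain $(\sqrt n/2)^{\varepsilon}$. In the first case we recurse into $G[N(v)\cap B]$, and any homogeneous set found there, together with $v$, stays homogeneous if it is a clique. If the homogeneous set is independent, we keep it as is. Recursing in this way loses only a constant factor per level, but the depth is unbounded, so this is the step to be careful about.

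The fix is to instead iterate on the vertex set. Repeatedly either find a large $H$-free piece or pass to a neighbourhood of proportional size. After $r$ steps we have either found a homogeneous set of size $n^{\Omega(1)}$, or obtained vertices $v_1,\dots,v_r$ forming a clique with a common neighbourhood of size $|B|/(2kh)^r$. Stopping at $r=\log n/(2\log(2kh))$ yields a clique of size $\Omega(\log n)$, which is not polynomial. So I expect this naive recursion to be the main obstacle. What I would try first is a genuine drop in $k$: inside $G[N(v)]$ with $v\in H_i$, choose the bag containing $v$, use that $H_i$ is anticomplete to other packing members, and argue that packings in $N(v)$ meeting the bag can be extended by $H_j$ for $j\ne i$ in a way that forces $\treepi_H(G[\text{suitable subset}])\le k-1$. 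Combined with the induction hypothesis, this gives exponent $\delta_{k-1}$ up to a constant-factor loss.

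\emph{Case 2: every such bag is small, of size less than $\sqrt n$.} Then recursive balanced separation splits $G$ into many pieces of size about $\sqrt n$ that are pairwise anticomplete after deleting small separators. Picking an independent set formed by one vertex from each of many anticomplete pieces (via the treewidth-like separation) gives an independent set of size roughly $n^{1/2}/\text{polylog}$, or else some piece is large and we recurse on it. Finally, we combine the two cases with $\delta_k=\min(\varepsilon/3,\delta_{k-1}/3,1/3)$.
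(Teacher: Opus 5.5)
\textbf{There is a genuine gap in Case 1, and it is the heart of the theorem.} In a large bag $G[B]$ you need a clique or independent set of polynomial size. All you know about $G[B]$ is that it has no $k+1$ pairwise anticomplete induced copies of $H$: copies lying inside $B$ are automatically $B$-rooted. In other words, $G[B]$ is $(k+1)H$-free.

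Deleting $X=\bigcup_i N[V(H_i)]$ leaves an $H$-free remainder, but this helps only when $X$ is small. As you observe yourself, recursing into neighbourhoods gives only a logarithmic clique.

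The proposed ``genuine drop in $k$'' is not justified, and it does not work as described. Inside $N(v)$ with $v\in V(H_i)$, the copy $H_i$ contains $v$, which is adjacent to all of $N(v)$. So $H_i$ can never be added to a packing rooted there. The other copies $H_j$ need not meet the relevant bag, and need not be anticomplete to $N(v)$. Hence no packing can be extended, Case 1 remains unproved, and the final choice of $\delta_k$ is unsupported.

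Case 2 is also only sketched. Moreover, splitting on the size of one particular balanced-separator bag does not cover the situation where that bag is small but other bags are large.

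\textbf{The missing idea.} The obstacle you ran into is exactly the problem of proving the Erd\H{o}s--Hajnal property for $(k+1)H$-free graphs, and that is a known result. Since $(k+1)H=I_{k+1}(H,\dots,H)$ is obtained by substitution, the Alon--Pach--Solymosi theorem gives the property for $(k+1)H$-free graphs with some $\delta>0$. Its hypotheses hold: $H$-free graphs have the property by assumption, and $I_{k+1}$-free graphs have it by Ramsey.

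With this in hand, you need neither induction on $k$ nor a balanced separator. Let $b$ be the maximum bag size of a decomposition witnessing $\treepi_H(G)\le k$.
\begin{itemize}
\item A largest bag is $(k+1)H$-free, so it contains a homogeneous set of size at least $b^\delta$.
\item Also $\chi(G)\le \tw(G)+1\le b$, so $\alpha(G)\ge n/b$.
\end{itemize}
Hence $\max\{\alpha(G),\omega(G)\}\ge \max\{b^\delta,n/b\}\ge n^{\delta/(\delta+1)}$. This is the paper's argument, and the second bullet also replaces your Case 2 with a single line.
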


\paragraph{Sim-width}
We now turn to our final result, which characterizes the relationship
between \emph{sim-width} and induced $H$-packing treewidth.  We defer
the precise definitions of sim-width to~\cref{sec:sim}; it is a graph
width parameter based on \emph{branch decompositions}, introduced by
Vatshelle~\cite{vatshelle2012new}.

Bounded induced matching treewidth implies bounded sim-width, but the
converse does not hold; see~\cite{DBLP:journals/jcss/LimaMMORS26}.
Further relationships between sim-width and other parameters in
restricted graph classes were studied in~\cite{BMPY25}, see also~\cref{fig:hierarchy}.
The authors
posed the following question:

\begin{question}[Brettell, Munaro, Paulusma, and Yang~\cite{BMPY25}]\label{question:Brettell-etal}
For every $r\in\mathbb N$, does every class of $K_{r,r}$-free graphs with bounded sim-width have bounded tree-independence number?
\end{question}

Subsequent work considered variants of this question.
For example, Štorgel, Choi, Koerts, and Vasić~\cite{vstorgel2026tree} stated the following weakening of \cref{question:Brettell-etal}.

\begin{question}[Štorgel, Choi, Koerts, and Vasić~\cite{vstorgel2026tree}]\label{question:Storgel-etal}
For each $r \geq 2$, does every $K_{1,r}$-free graph class with bounded sim-width have bounded
tree-independence number?
\end{question}

On the other hand, Abrishami, Briański, Czyżewska, McCarty, Milanič,
Rzążewski, and Walczak~\cite{abrishami2025excluding} asked the
following related strengthening of~\cref{question:Brettell-etal}.
For a positive integer $r$, an \emph{$r$-obstruction} is a graph whose
vertex set consists of four independent sets $A$, $B$, $C$, and $D$,
each of size $r$, such that $A\cup B$ and $C\cup D$ each induce a
matching of $r$ edges and $B\cup C$ induces a copy of $K_{r,r}$.

\begin{question}[Abrishami, Briański, Czyżewska, McCarty, Milanič, Rzążewski, and Walczak~\cite{abrishami2025excluding}]\label{question:abrishami-etal}
For every $r\in\mathbb N$, does every class of bounded sim-width
graphs that excludes $r$-obstructions as induced subgraphs have
bounded induced matching treewidth?
\end{question}

An affirmative answer to \cref{question:abrishami-etal} implies an
affirmative answer to \cref{question:Brettell-etal}, and therefore to
\cref{question:Storgel-etal}.  Indeed, every $r$-obstruction contains
an induced copy of $K_{r,r}$ and, by~\cref{thm:treemu-biclique},
bounded induced matching treewidth and bounded tree-independence
number are equivalent for $K_{r,r}$-free graph classes.

To state our final result, we introduce a generalization of
$r$-obstructions, called \emph{rooted $H$-obstructions}.  Their
precise definition is deferred to~\cref{sec:sim}; just as an
$r$-obstruction forces induced matching treewidth at least $r$, a
rooted $H$-obstruction of order $r$ forces induced $H$-packing treewidth at
least $r$.  For $H=K_1$, these obstructions are induced copies of
$K_{r,r}$, and for $H=P_2$, they are precisely the $r$-obstructions
defined above.

We also show that sim-width and induced $P_3$-packing treewidth are
incomparable.  The following theorem gives the promised
characterization and simultaneously resolves
\cref{question:abrishami-etal,question:Brettell-etal,question:Storgel-etal}.

\begin{restatable}{theorem}{hobstruction}
\label{thm:simw-rooted-H-obstruction-restate}
Let $H$ be a connected graph, and let $\mathcal G$ be a class
of bounded sim-width graphs. Then $\mathcal G$ has bounded induced
$H$-packing treewidth if and only if there exists an integer
$r\in \mathbb{N}$ such that no graph in $\mathcal G$ contains a rooted
$H$-obstruction of order $r$ as an induced subgraph.
\end{restatable}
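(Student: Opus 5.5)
The forward direction is the easy one and we would dispose of it first. As the paper remarks just before the statement, a rooted $H$-obstruction of order $r$ forces $\treepi_H\ge r$. We would verify this directly from the definition to be deferred to \cref{sec:sim}. The obstruction contains a biclique core $B\cup C\cong K_{r,r}$, and each vertex of $B$ (resp.\ $C$) is the root of a private copy of $H$ hanging on the $A$-side (resp.\ $D$-side). In any tree decomposition, the standard biclique argument (two subtrees either intersect or are separated by an edge of the tree) yields a bag that meets all of $B$ or all of $C$, or that hits a whole side of the hanging copies. The $r$ copies attached to that side are pairwise anticomplete and all intersect the bag. Since induced $H$-packing treewidth is monotone under induced subgraphs, bounded $\treepi_H$ on $\mathcal G$ excludes rooted $H$-obstructions of some order $r$.

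For the converse, we fix a branch decomposition $(T,\delta)$ of $G\in\mathcal G$ of sim-width at most $w$. We would build the tree decomposition by the standard conversion of branch decompositions into tree decompositions. For each edge $e$ of $T$ with cut $(X_e,\overline{X_e})$, we take the bag $\beta_e$ consisting of the vertices of $X_e$ having a neighbour in $\overline{X_e}$, together with the vertices of $\overline{X_e}$ having a neighbour in $X_e$. At each node of $T$ we take the union of the bags of its incident edges. This is a valid tree decomposition, since every edge is crossed by some cut and connectivity of the occurrences follows from the tree structure. It then suffices to bound, for each cut, the number of pairwise anticomplete copies of $H$ meeting the boundary $\beta_e$ by a function of $w$, $r$ and $|V(H)|$.

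Suppose an induced $H$-packing $H_1,\dots,H_m$ meets $\beta_e$ with $m$ huge. Each $H_i$ contains a boundary vertex $x_i$, say on the $X_e$ side after passing to half of the copies, with a cross neighbour $y_i\in\overline{X_e}$. The edges $x_iy_i$ form a matching across the cut, and pairwise anticompleteness of the $H_i$ controls which cross edges can appear. By Ramsey's theorem on these pairs, using the bound $w$ on induced matchings across the cut, we find $s$ indices on which the bipartite graph between $\{x_i\}$ and $\{y_i\}$ is complete apart from the diagonal pattern. A further Ramsey step lets us pass to an induced $K_{s',s'}$ between the $x$'s and the $y$'s, possibly after replacing the $y$'s by other cross neighbours. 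Using connectivity of $H$, we then locate inside each $H_i$ the copy rooted at $x_i$, and symmetrically find hanging copies for the $y$-side, again made pairwise anticomplete by Ramsey cleaning. The outcome is a rooted $H$-obstruction of order $r$, which contradicts the hypothesis.

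We expect this last extraction to be the main obstacle. The $y$-side copies need not be anticomplete to the $x$-side copies, and the $y_i$ may belong to the $H_j$'s themselves. We would try first to choose the $y_i$ inside some $H_j$, reducing to copies crossing the cut, and to iterate Ramsey (\cref{lem:structural-package}-type monotonicity helps) until the interaction pattern is uniform. We would then show that every uniform pattern other than the obstruction pattern yields an induced matching of size exceeding $w$ across the cut, violating the sim-width bound.
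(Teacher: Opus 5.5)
Your forward direction is fine and is the paper's argument (\cref{lem:rooted-H-obstruction-lower-bound}). The converse, however, has a genuine gap. It rests on a reduction that is false: the number of pairwise anticomplete copies of $H$ meeting the boundary $\beta_e$ of a displayed cut cannot be bounded in terms of $w$, $r$ and $|V(H)|$.

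Take $H=K_1$ and $G=K_{1,n}$ with centre $c$.
\begin{itemize}
\item Every cut has cut-sim value at most $1$, since all edges contain $c$.
\item $G$ has no induced $K_{2,2}$, i.e.\ no rooted $K_1$-obstruction of order $2$.
\item Yet in \emph{every} branch decomposition, the leaf edge at $\delta(c)$ displays $(\{c\},V(G)\setminus\{c\})$. Its boundary is all of $V(G)$, which contains $n$ pairwise anticomplete copies of $K_1$.
\end{itemize}
The same works for every tree $H$. Join a new vertex $c$ to one vertex in each of $n$ pairwise anticomplete copies of $H$. The result is a tree, so it is chordal and has sim-width at most $1$. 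It also has no induced $C_4$, hence no rooted $H$-obstruction of order $2$. But the boundary at $\delta(c)$ meets all $n$ copies.

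The step that breaks is ``the edges $x_iy_i$ form a matching across the cut''. The cross neighbours $y_i$ may all be the same high-degree vertex. This star pattern is neither a large sim-matching nor an obstruction, so the dichotomy you aim for in your last paragraph does not exist. Since this happens for every branch decomposition, full boundaries of cuts are simply the wrong bags, even though $\treepi_H$ of these graphs is small.

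The missing idea is to cover the crossing edges of a cut not by its whole boundary but by a set of bounded independence number; in the star, this is just $\{c\}$.
\begin{itemize}
\item In a $K_{q,q}$-free graph, \cref{lem:hit-vs-anti} applied to the $2$-system of crossing edges provides such a cover. Its alternative outcome is an anticomplete family of crossing edges, i.e.\ a sim-matching (\cref{lem:sim-cut-hitting-set}).
\item Covers for different cuts do not glue into a tree decomposition. So the paper uses them only at a weighted centroid of $T$, obtaining a balanced separator of bounded independence number for every weighting (\cref{lem:simw-balanced-separator}). It then applies \cref{lem:sep-vs-ta}; the final tree decomposition is not derived from the branch decomposition at all.
\item To pass from $\alpha$ to $\pi_H$, everything is done in the blob graph $G_H^\circ$. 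There $\treepi_H(G)=\treealpha(G_H^\circ)$ (\cref{lem:blob-equivalence}), and sim-width does not increase (\cref{lem:munaro-yang-simw}).
\item Excluding rooted $H$-obstructions makes $G_H^\circ$ free of $K_{q_H(r),q_H(r)}$ via a Ramsey argument (\cref{lem:biclique-to-rooted-H-obstruction}).
\end{itemize}
That last step is where your Ramsey cleaning belongs. It is applied to an induced biclique between two anticomplete families of copies of $H$, not to the boundary of a fixed cut.
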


\begin{remark}
Very recently, Niu and Wang~\cite[Theorem~1.2]{niu2026simwidth}
independently showed that, for every fixed integer $r\geq2$, the
tree-independence number of $K_{r,r}$-free graphs is polynomially
bounded in their sim-width.  This answers in particular
\cref{question:Brettell-etal,question:Storgel-etal}.
However, our characterization is not subsumed by their
theorem as stated. Their result assumes the exclusion
of a fixed induced biclique and does not directly address \cref{question:abrishami-etal}.
In particular, excluding $r$-obstructions need not exclude
large induced bicliques.

In contrast, \cref{thm:simw-rooted-H-obstruction-restate}
characterizes bounded induced $H$-packing treewidth within classes of
bounded sim-width for every fixed connected graph $H$, by identifying
the exclusion of rooted $H$-obstructions of some fixed order as a
necessary and sufficient condition.
The case $H=K_1$ recovers the qualitative boundedness
conclusion of Niu and Wang, whereas the case $H=P_2$
answers the stronger $r$-obstruction question.
\end{remark}

\section{Preliminaries}\label{sec:prelim}
For a positive integer $n$, by $[n]$ we denote the set $\{1,\ldots,n\}$.
For a set $X$, by $2^X$ we denote the family of all subsets of $X$. 

\paragraph{Graphs}
Throughout the paper, graphs have finite vertex sets, no loops, and no parallel edges. 
Let $G$ be a~graph with vertex set $V(G)$ and edge set $E(G)$.
For $X \subseteq V(G)$, we denote the subgraph of $G$ \emph{induced by $X$} as $G[X]$,
that is, $G[X] = (X, \{uv \mid u, v \in X \mbox{ and } uv \in E(G) \})$,
and we write $G-X$ for $G[V(G) \setminus X]$.

By $N_G(v)$ we denote the set of neighbors of a vertex $v$ in $G$, and we use $N_G[v]$ for $N_G(v) \cup \{v\}$.
For a set $X \subseteq V(G)$, we define $N_G[X] = \bigcup_{v \in X} N_G[v]$.
We skip the subscript $G$ when it is clear from the context.
Two disjoint sets $X,Y \subseteq V(G)$ are \emph{complete} (resp., \emph{anticomplete}) if all (resp., no) edges between them exist. Analogously, we say that a vertex is (anti)complete to a set.

By $\alpha(G)$ and $\omega(G)$ we denote, respectively, the sizes of a largest independent set and a largest clique in $G$.

When it does not lead to confusion, we will sometimes identify induced subgraphs with their vertex sets. For example, for $X \subseteq V(G)$, we write $\alpha(X)$ for $\alpha(G[X])$.

The length of a path is the number of its edges, i.e., $P_t$ has length $t-1$.
For $X,Y\subseteq V(G)$, an \defn{$(X,Y)$-path} is an induced path $P$
in $G$ such that either $P$ has length zero and $V(P)\subseteq X\cap Y$,
or $P$ has positive length, one end belongs to $X\setminus Y$, the other
belongs to $Y\setminus X$, and the interior of $P$ is disjoint from
$X\cup Y$.

\paragraph{Ramsey's theorem.}
We use the following classical result from Ramsey theory.

\begin{theorem}[Multicolor Ramsey’s theorem~\cite{ramsey1987problem}]
\label{thm:ramsey}
For all $k,s_1,\ldots,s_k \in\mathbb N$, there is an integer $\Ram_k(s_1,\ldots,s_k)$ such that
every $k$-edge-coloring of a complete graph on at least $\Ram_k(s_1,\ldots,s_k)$ vertices contains a copy of $K_{s_i}$ whose edges all have color $i$.
\end{theorem}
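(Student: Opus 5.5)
Ramsey's theorem is classical, and the paper cites it without proof, but here is how I would prove it.

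The plan is induction on the sum $s_1+\cdots+s_k$, following the Erdős–Szekeres neighbourhood argument, with the base cases handled directly. If some $s_i\le 1$, the statement is trivial: a single vertex is a copy of $K_1$, so $\Ram_k(s_1,\ldots,s_k)=1$ works. If $k=1$, any complete graph on $s_1$ vertices is monochromatic in color $1$. For the general step, assume all $s_i\ge 2$ and define
\[
N \;=\; 1+\sum_{i=1}^{k}\Ram_k(s_1,\ldots,s_{i-1},s_i-1,s_{i+1},\ldots,s_k),
\]
where each summand exists by the induction hypothesis, since its parameter sum is smaller.

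Now take a $k$-edge-coloring of $K_n$ with $n\ge N$, and fix a vertex $v$. Partition the remaining $n-1\ge N-1$ vertices into classes $V_1,\ldots,V_k$, where $V_i$ consists of the vertices joined to $v$ by an edge of color $i$. By the pigeonhole principle, some $V_i$ has at least $\Ram_k(s_1,\ldots,s_i-1,\ldots,s_k)$ vertices.

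Apply the induction hypothesis to the complete graph on $V_i$ with the inherited coloring. There are two possible outcomes.
\begin{itemize}
\item It contains a color-$j$ clique of size $s_j$ for some $j\ne i$. This is already the desired copy of $K_{s_j}$.
\item It contains a color-$i$ clique of size $s_i-1$. Adding $v$ gives a color-$i$ clique of size $s_i$, because $v$ is joined to all of $V_i$ in color $i$.
\end{itemize}
Either way we obtain the required monochromatic clique, which completes the induction.

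There is no real obstacle here. The only point needing care is setting up the induction parameter and the base cases so that every call to $\Ram_k$ is well defined. An alternative is to reduce to the two-color case by merging colors $2,\ldots,k$ into one and iterating, but the direct multicolor induction above is cleaner.
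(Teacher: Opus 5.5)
Your proof is correct: it is the standard Erd\H{o}s--Szekeres-style neighbourhood induction, giving $\Ram_k(s_1,\ldots,s_k)\leq 1+\sum_{i=1}^{k}\Ram_k(s_1,\ldots,s_i-1,\ldots,s_k)$, and the base cases and the pigeonhole count are handled properly (the separate case $k=1$ is already covered by the general step). The paper gives no proof of its own; it cites the theorem as classical and uses only the existence of these numbers, so there is no competing argument to compare, and your proof supplies everything the paper needs.
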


For $k=2$, write $\Ram(s,t)=\Ram_2(t,s)$. Thus, every graph on at
least $\Ram(s,t)$ vertices contains either an independent set of size
$s$ or a clique of size $t$.

\paragraph{Layerings.}
Let $G$ be a nonempty graph. A \defn{layering} of $G$ is a sequence
$
    \mathcal L=(L_0,\ldots,L_h),
$
where $h\in\mathbb N\cup\{0\}$ and $L_0,\ldots,L_h$ are pairwise
disjoint nonempty subsets of $V(G)$ whose union is $V(G)$, such that,
for every edge $uv\in E(G)$ with $u\in L_i$ and $v\in L_j$, one has
$
    |i-j|\leq1.
$
The sets $L_0,\ldots,L_h$ are called the \defn{layers} of
$\mathcal L$. In particular, $L_i$ and $L_j$ are anticomplete whenever
$|i-j|\geq2$. For every $i\in\{0,\ldots,h\}$, we write
$
    L_{\leq i}:=\bigcup_{j=0}^i L_j.
$

Suppose that $G$ is connected, and let
$\emptyset\neq R\subseteq V(G)$. For every $v\in V(G)$, set
$
    d_G(v,R):=\min\{d_G(v,r)\mid r\in R\},
$
and let
$
    h:=\max_{v\in V(G)}d_G(v,R).
$
For every $i\in\{0,\ldots,h\}$, set
\begin{equation*}
    L_i:=\{v\in V(G)\mid d_G(v,R)=i\}.
\end{equation*}
Then $(L_0,\ldots,L_h)$ is a layering of $G$, called the
\defn{distance layering} of $G$ from $R$. Indeed, the triangle
inequality gives
$
    |d_G(u,R)-d_G(v,R)|\leq1
$
for every edge $uv\in E(G)$, and a shortest path from a vertex at
distance $h$ from $R$ meets every layer.

Notice that $L_0=R$, and every vertex in $L_i$, for
$i\in\{1,\ldots,h\}$, has a neighbor in $L_{i-1}$. Consequently, if
$G[R]$ is connected, then $G[L_{\leq i}]$ is connected for every
$i\in\{0,\ldots,h\}$. When $R=\{r\}$, we also say that the distance
layering is \defn{rooted at $r$}.

\paragraph{Set systems and hitting sets}
For $b\in\mathbb{N}$, a \defn{$b$-system} in a graph $G$ is a family of nonempty subsets of $V(G)$, each of cardinality at most $b$. A $b$-system is \defn{anticomplete} if every two distinct members are anticomplete in $G$. A set $X\subseteq V(G)$ is a \defn{hitting set} for a family $\calS$ of subsets of $V(G)$ if $X\cap S\neq\emptyset$ for every $S\in\calS$.

The following result of Hajebi and Spirkl~\cite{hajebi2026tree}, used by them to
characterize the hereditary classes defined by finitely many excluded induced
subgraphs that have bounded $\treealpha$, is applied twice in this paper. In both
applications, its alternative outcome is ruled out by a boundedness assumption
(on induced packings in~\cref{sec:no-Ktt}, on sim-matchings in~\cref{sec:sim}),
so that it yields a hitting set of bounded independence number.

\begin{lemma}[Hajebi and Spirkl~\cite{hajebi2026tree}]\label{lem:hit-vs-anti}
For all $a,b,q\in\mathbb{N}$, there is a constant
$c_{\ref{lem:hit-vs-anti}}=c_{\ref{lem:hit-vs-anti}}(a,b,q)\in\mathbb{N}$
such that, for every $K_{a,a}$-free graph $G$ and every $b$-system $\calS$ in $G$, one of the following holds:
\begin{enumerate}[label=\textup{(\alph*)}, leftmargin=8mm]
    \item there is a hitting set $X\subseteq V(G)$ for $\calS$ with
    $\alpha(G[X])<c_{\ref{lem:hit-vs-anti}}$; or
    \item there is an anticomplete $b$-system $\calQ\subseteq\calS$ with
    $|\calQ|\geq q$.
\end{enumerate}
\end{lemma}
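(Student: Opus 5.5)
The natural route is induction on $b$, with a greedy step that uses $K_{a,a}$-freeness to lower $b$ by one.

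\textbf{Base case $b=1$.} Here $\calS$ is a family of singletons. Let $X=\bigcup\calS$, which is trivially a hitting set. If $\alpha(G[X])<q$, then (a) holds with $c=q$. Otherwise an independent set of size $q$ in $X$ gives $q$ singletons that are pairwise anticomplete, which is (b).

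\textbf{Inductive step, reduction to a small root set.} Assume the lemma for $b-1$ and let $\calS$ be a $b$-system. Choose a maximal anticomplete subfamily $\calQ\subseteq\calS$. If $|\calQ|\geq q$, outcome (b) holds. Otherwise set $Y=\bigcup\calQ$, so $|Y|<bq$. By maximality, every $S\in\calS$ meets $N[Y]$. The vertices of $Y$ themselves contribute at most $bq$ to the independence number, so it suffices, for each fixed $y\in Y$, to find a hitting set of bounded $\alpha$ for the subfamily $\calS_y$ of sets meeting $N(y)$.

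\textbf{Handling a single neighbourhood.} Fix $y$ and let $W=N(y)\cap\bigcup\calS_y$. If $\alpha(G[W])$ is below a suitable threshold $N$, then $W$ itself is the hitting set. Otherwise take a huge independent set $I\subseteq W$. Each set has at most $b$ vertices, so after discarding at most a factor $b$ we may assume the vertices of $I$ lie in pairwise distinct sets $S_v\ni v$.
\begin{itemize}
\item Apply Ramsey's theorem (\cref{thm:ramsey}), with colours recording the adjacency pattern between $v$ and the remaining parts $S_w\setminus\{v'\}$, to get a large homogeneous subfamily.
\item $K_{a,a}$-freeness is the tool that kills the ``all adjacent'' colour classes. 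A large family in which each chosen vertex $v$ is adjacent to $S_w$ for all $w\neq v$ yields, after choosing one vertex from each of $a$ sets, a $K_{a,a}$ between $\{v_1,\dots,v_a\}$ and $a$ other chosen vertices. Bounded set size and pigeonhole make this possible.
\item Ruling out these patterns, we are left with many sets in which each $v$ is anticomplete to $S_w$ for $w\ne v$.
\item Now apply the induction hypothesis to the $(b-1)$-system $\{S_v\setminus\{v\}\}$, after discarding sets with $S_v=\{v\}$, which are already good because $I$ is independent. This gives either a bounded-$\alpha$ hitting set or $q$ pairwise anticomplete remainders. In the second case, putting back the vertices $v$, which are independent and anticomplete to the other remainders, gives $q$ anticomplete members of $\calS$.
\item In the first case, the hitting set for the remainders, together with a bounded-$\alpha$ portion of $N(y)$, covers the family.
\end{itemize}

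\textbf{Main obstacle.} The delicate point is the last bullet. A family with large $\alpha$ inside $N(y)$ must be split into a part we can hit cheaply and a part on which induction applies. The Ramsey cleaning above only works on a selected subfamily, not on all of $\calS_y$. I would try iterating: repeatedly extract such subfamilies and put the inductive hitting sets into $X$, and bound the number of rounds using $K_{a,a}$-freeness. Concretely, if there were many rounds, the vertices $y$ and the extracted independent sets would build a large biclique. I expect this bookkeeping, which keeps $c(a,b,q)$ independent of $|V(G)|$, to be the real work.
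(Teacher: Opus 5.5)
The paper does not prove this lemma. It quotes it from Hajebi and Spirkl and uses it as a black box, so your argument has to stand on its own, and it does not. Your base case is fine, and so is the reduction to the subfamilies $\mathcal S_y$ of members meeting $N(y)$, for $y$ in a set $Y$ of fewer than $bq$ vertices.

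The first problem is the Ramsey step. The ``all adjacent'' pattern is \emph{not} excluded by $K_{a,a}$-freeness, because the vertices you pick from the sets $S_w\setminus\{w\}$ need be neither distinct nor pairwise nonadjacent. For instance, let $I$ be an independent set, let $Z=\{z_v: v\in I\}$ be a clique complete to $I$, and let $S_v=\{v,z_v\}$.
\begin{itemize}
\item Every independent set of size two lies in $I$, so the graph is $K_{2,2}$-free.
\item This family arises in your procedure: the maximal anticomplete subfamily is a single $S_{v_0}$, and you take $y=z_{v_0}$.
\item Each $v$ is adjacent to $S_w\setminus\{w\}=\{z_w\}$ for every $w\neq v$, yet there is no biclique.
\end{itemize}
The correct conclusion here is that $Z$ hits the family with $\alpha(Z)=1$. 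Index the at most $b$ vertices of each set. After a full Ramsey cleaning, each index is either constant or gives distinct vertices forming a clique or an independent set. $K_{a,a}$-freeness then only excludes two independent indices joined in a half-graph pattern. A constant vertex or a clique index survives as a third outcome, ``cheaply hittable'', and cannot be discarded.

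The more serious gap is the one you flag yourself, and it is the entire content of the lemma. Everything you do controls one Ramsey-homogeneous subfamily, but you need a single hitting set for all of $\mathcal S_y$. The remaining members still meet parts of $W$ of unbounded independence number. The ``bounded-$\alpha$ portion of $N(y)$'' in your last bullet is never constructed.

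Iterating does not repair this as sketched. Each round adds a hitting set of positive independence number, these costs add up, and you give no reason why the number of rounds is bounded. Your suggested reason fails: all extracted independent sets lie in $N(y)$ for the same vertex $y$, so many rounds produce a large star centred at $y$ rather than a biclique. Nothing forces the hitting sets from different rounds to be adjacent to each other.

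Passing from ``every large subfamily contains a large subfamily that is anticomplete or cheaply hittable'' to ``the whole family is cheaply hittable'' is an Erd\H{o}s--P\'osa-type step. A proof must bound the independence number of the union of all the pieces at once. A typical tool is that in a $K_{a,a}$-free graph, the common neighbourhood of any $a$ pairwise nonadjacent vertices has independence number less than $a$. Your outline contains no step of this kind.
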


\paragraph{Weighted graphs and balanced separators.}
A \emph{vertex-weighted graph} is a pair $(G,\wei)$ where $G$ is a graph and $\wei:V(G)\to\mathbb Q_{\ge0}$ is a weight function; we also call such a $\wei$ a \defn{vertex-weighting} of $G$. For a set $X\subseteq V(G)$, we write $\wei(X)=\sum_{v\in X}\wei(v)$, and we write $\wei(G):=\wei(V(G))$.
We assume that all computations on weights can be performed in constant time.

For a weighted graph $(G,\wei)$, a \emph{balanced separator} of $G$ is a set $S \subseteq V(G)$ such that every component of $G-S$ has weight at most $\wei(G)/2$. We will often work with balanced separators for \emph{uniform} weight functions, i.e., $\wei(v)=1$ for every $v\in V(G)$. In this case, every component of $G-S$ has at most $|V(G)|/2$ vertices.

\paragraph{Tree decompositions and balanced separators}
For a graph $G$, a \emph{tree decomposition} of $G$ is a pair $(T,\beta)$, where $T$ is a tree and $\beta \colon V(T) \rightarrow 2^{V(G)}$ is a map with the following properties:
\begin{itemize}
    \item For every $v \in V(G)$, there exists $x \in V(T)$ such that $v \in \beta(x)$.
    \item For every $uv \in E(G)$, there exists $x \in V(T)$ such that $u, v \in \beta(x)$.
    \item For every $v \in V(G)$, the subgraph of $T$ induced by $\{x \in V(T) \mid v \in \beta(x)\}$ is connected.
\end{itemize}
For every $x \in V(T)$, we refer to $x$ as a \emph{node} of $T$, and to $\beta(x)$ as a \emph{bag} of $(T, \beta)$.
The \emph{width} of a tree decomposition $(T, \beta)$ is $\max_{x\in V(T)} |\beta(x)|-1$.
The \emph{treewidth} of $G$, denoted by $\tw(G)$, is the minimum width of a tree decomposition of $G$.
The \emph{independence number} of a tree decomposition $(T, \beta)$ of $G$ is $\max_{x\in V(T)} \alpha(G[\beta(x)])$. The \emph{tree-independence number} of $G$, denoted $\treealpha(G)$, is the minimum independence number of a tree decomposition of $G$.

The following lemma is a standard fact about tree decompositions and balanced separators (e.g.~\cite[Lemma~7.19]{cygan2015parameterized}).

\begin{lemma}\label{lem:balanced-bag}
Let $(G,\wei)$ be a weighted graph with a tree decomposition $(T,\beta)$.
Then there is a node $x\in V(T)$ such that
$\beta(x)$ is a balanced separator of $(G,\wei)$.
\end{lemma}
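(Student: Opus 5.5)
The plan is to prove the statement by induction on the structure of a tree decomposition, using a standard rooting argument. Fix a tree decomposition $(T,\beta)$ of $G$ and a vertex-weighting $\wei$. For each edge $xy$ of $T$, removing it splits $T$ into two subtrees $T_x\ni x$ and $T_y\ni y$. Set
\[
A_{xy}:=\bigcup_{z\in V(T_y)}\beta(z)\setminus\beta(x)\cap\beta(y),
\]
meaning the vertices that appear only in bags on the $y$-side, outside the adhesion $\beta(x)\cap\beta(y)$. The key fact to record first is the usual one: every connected subgraph of $G-\beta(x)$ lies entirely within $\bigcup_{z\in V(T_y)}\beta(z)\setminus\beta(x)$ for a single neighbor $y$ of $x$. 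This holds because any edge $uv$ with $u,v\notin\beta(x)$ is contained in some bag, and the subtrees of bags containing $u$ and $v$ avoid $x$. By connectivity of those subtrees, both subtrees therefore lie in the same component of $T-x$.

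Next comes the orientation. For each edge $xy$, orient it from $x$ towards $y$ if some component of $G-\beta(x)$ contained on the $y$-side has weight more than $\wei(G)/2$. For a fixed $x$, at most one such component can exist, since two of them would give total weight exceeding $\wei(G)$. Hence each node has at most one outgoing edge. If some node $x$ has no outgoing edge, then every component of $G-\beta(x)$ has weight at most $\wei(G)/2$, and we are done.

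Otherwise every node has exactly one out-edge. In a finite tree, following out-edges must eventually traverse some edge $xy$ in both directions, giving $x\to y$ and $y\to x$. In that case there is a heavy component $C_1$ of $G-\beta(x)$ contained in the $y$-side, and a heavy component $C_2$ of $G-\beta(y)$ contained in the $x$-side. The vertex sets on the $y$-side outside $\beta(x)$ and on the $x$-side outside $\beta(y)$ are disjoint, because a vertex in both would appear in bags on both sides and hence, by connectivity, in $\beta(x)\cap\beta(y)$. So $C_1$ and $C_2$ are disjoint, and $\wei(C_1)+\wei(C_2)>\wei(G)$, a contradiction.

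The only delicate point is the disjointness and locality argument, namely the connectivity of the subtree of bags containing each vertex. The rest is routine.
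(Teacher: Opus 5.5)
Your proof is correct. The paper does not prove this lemma itself; it cites it as a standard fact (Cygan et al., Lemma~7.19). The textbook argument there is a rooted extremal one, not your orientation argument. Root $T$, let $V_t$ be the union of the bags in the subtree below $t$, and choose $t$ as far from the root as possible subject to $\wei(V_t)\geq \wei(G)/2$. Then every component of $G-\beta(t)$ lies either in $V(G)\setminus V_t$, which has weight at most $\wei(G)/2$, or in $V_{t'}\setminus\beta(t)$ for a child $t'$ of $t$, which has weight below $\wei(G)/2$ by the choice of $t$.

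Both routes rest on the locality fact you isolate: a connected subgraph avoiding $\beta(x)$ lies in a single branch of $T-x$. The rooted version is shorter. It needs neither the uniqueness of the heavy component nor a walk in $T$, and it finds the bag by one bottom-up pass of subtree weights. Yours is root-free and symmetric, which is the skeleton used in haven- and bramble-type arguments.

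You can also streamline your ending. Your last paragraph shows that no edge of $T$ is ever oriented both ways, and it does not use the no-sink assumption. So there are at most $|V(T)|-1$ arcs, and some node is a sink by counting alone; the walk argument can be dropped.

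Two cosmetic points. The set $A_{xy}$ is never used, and its precedence of $\setminus$ and $\cap$ is ambiguous, so delete it. Your opening sentence announces an induction and a rooting argument, but the proof contains neither.
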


Conversely, balanced separators of bounded independence number, found for
\emph{every} vertex-weighting, certify bounded tree-independence number. The
following is the specialization of~\cite[Lemma~7.1]{chudnovsky2026tree} to balanced
separators. That result is stated for normalized weight functions, i.e., those with
$\wei(G)=1$; since the conclusion is vacuous when $\wei(G)=0$, scaling the weights
gives the equivalent formulation below.

\begin{lemma}[Chudnovsky, Hajebi, Lokshtanov, and Spirkl~\cite{chudnovsky2026tree}]\label{lem:sep-vs-ta}
Let $G$ be a graph and let $\lambda\in\mathbb{N}$. Suppose that, for every vertex-weighting $\wei$ of $G$, the weighted graph $(G,\wei)$ has a balanced separator $Z$ satisfying $\alpha(G[Z])\leq \lambda$. Then
$
    \treealpha(G)\leq 5\lambda.
$
\end{lemma}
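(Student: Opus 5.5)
This lemma is imported from Chudnovsky, Hajebi, Lokshtanov, and Spirkl~\cite{chudnovsky2026tree}, so the paper does not need to reprove it. If I were to prove it, I would use the classical Robertson--Seymour recursion that turns balanced separators into a tree decomposition, measuring bags by $\alpha$ instead of by cardinality. I have not checked that my bookkeeping reaches exactly $5\lambda$; the step below that controls the interfaces is the one I am least sure of.

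\emph{Recursive procedure.} I would define a procedure $\mathrm{Dec}(U,W)$ for $W\subseteq U\subseteq V(G)$ satisfying the invariant $\alpha(G[W])\leq 4\lambda$. It returns a tree decomposition of $G[U]$ whose bags all have independence number at most $5\lambda$ and one of whose bags contains $W$. Calling $\mathrm{Dec}(V(G),\emptyset)$ then proves the lemma. The base case is $\alpha(G[U])\leq 5\lambda$: take a single bag $U$.

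\emph{Recursive step.} Choose an independent set $I$ of $G[W]$. Apply the hypothesis to the weighting that gives weight $1$ to each vertex of $I$ and $0$ elsewhere, obtaining a balanced separator $Z$ with $\alpha(G[Z])\leq\lambda$. Put $Z'=Z\cap U$. Every component $C$ of $G[U]-Z'$ lies inside a component of $G-Z$, so $|C\cap I|\leq |I|/2$. For each component $C$, set
\[
U_C=C\cup (N(C)\cap(W\cup Z')), \qquad W_C=(W\cap U_C)\cup (Z'\cap N(C)).
\]
Recurse on $\mathrm{Dec}(U_C,W_C)$. Then attach the resulting decompositions to a new root bag $W\cup Z'$, which has independence number at most $4\lambda+\lambda=5\lambda$.

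\emph{Validity.} Edges and connectivity of the tree decomposition are routine. Every edge between $C$ and the rest of $U$ goes to $Z'\cup W$, and these vertices are placed in the interface $W_C$, so they appear in the root of the child decomposition. Termination also needs an argument: if $U_C=U$, the step made no progress, and I would argue this cannot happen when $\alpha(G[U])>5\lambda$, because then $W\cup Z'\neq U$.

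\emph{Main obstacle: restoring the invariant.} Components are pairwise anticomplete, so $\alpha(G[W\setminus Z'])=\sum_C \alpha(G[W\cap C])$. However, the weighting only halves $|I\cap C|$, not $\alpha(G[W\cap C])$. I would fix this by repeating the separator step. Whenever some component $C$ has $\alpha(G[W\cap C])>\alpha(G[W])/2$, reapply the hypothesis with $I$ a maximum independent set of $G[W\cap C]$. Alternatively, I would make $I$ a maximum independent set of $W$ from the start and use an exchange argument inside each component. The aim is that the new interface satisfies
\[
\alpha(G[W_C])\leq \alpha(G[W\cap C])+\alpha(G[Z'])\leq 3\lambda+\lambda=4\lambda.
\]
If this has to be relaxed, I would fall back to the cruder invariant $\alpha(G[W])\leq 3\lambda$, giving a somewhat worse but still linear constant, and then tighten it afterwards.
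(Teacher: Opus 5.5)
The paper does not prove this lemma. It imports Lemma~7.1 of Chudnovsky, Hajebi, Lokshtanov, and Spirkl and only remarks that rescaling removes their normalization of the weights, so your sketch has to stand on its own. The architecture is right: an interface $W$ with $\alpha(G[W])\le 4\lambda$ and a root bag $W\cup Z'$ with independence number at most $5\lambda$. However, two steps fail as written.

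\textbf{Interface bound.} You leave it open, and with an arbitrary independent set $I$ it is false (take $I=\emptyset$). Your second option is the one that works, and it takes one line. Let $I$ be a \emph{maximum} independent set of $G[W]$, let $C$ be a component of $G[U]-Z'$, and let $J$ be a maximum independent set of $G[W\cap C]$. The set $I\setminus(C\cup Z')$ lies in the other components of $G[U]-Z'$, which are anticomplete to $C$. Hence $J\cup(I\setminus(C\cup Z'))$ is an independent subset of $W$. Comparing its size with $|I|=\alpha(G[W])$ gives
$\alpha(G[W\cap C])\le |I\cap C|+|I\cap Z'|\le |I|/2+\lambda\le 3\lambda$,
and therefore $\alpha(G[W_C])\le 4\lambda$. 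Drop the repeated-separator alternative. Each further separator would have to go into the root bag, pushing its independence number past $5\lambda$.

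\textbf{Termination.} Your argument here is wrong. The fact that $W\cup Z'\neq U$ does not prevent $U_C=U$; that only requires $G[U]-Z'$ to be connected with every vertex of $Z'$ adjacent to it. The very first call $\mathrm{Dec}(V(G),\emptyset)$ already fails. There $I=\emptyset$, so the weighting is identically zero and $Z=\emptyset$ is a legitimate balanced separator. For connected $G$ with $\alpha(G)>5\lambda$ you then get $U_C=V(G)$ and $W_C=\emptyset$ forever.

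The separator only cuts $U$ when $I$ is heavy. Indeed, $U_C=U$ forces $I\setminus Z'\subseteq C$, hence $|I|-\lambda\le |I\setminus Z'|\le |I|/2$, that is, $|I|\le 2\lambda$.

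What is missing is the padding step of the Robertson--Seymour argument:
\begin{itemize}
\item When $\alpha(G[U])>5\lambda$, first enlarge $W$ inside $U$ to a set $W^+$ with $\alpha(G[W^+])=4\lambda$. Add vertices of a maximum independent set of $G[U]$ one at a time; each addition raises $\alpha$ by at most one, so the value $4\lambda$ is hit exactly.
\item Take $I$ maximum in $G[W^+]$, use the root bag $W^+\cup Z'$, and set $W_C=(W^+\cap C)\cup(N(C)\cap Z')$.
\end{itemize}
Then $|I|=4\lambda>2\lambda$ gives $|U_C|<|U|$. The exchange bound, applied to $W^+$, gives $\alpha(G[W_C])\le 4\lambda$. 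The root bag still contains $W$ and has independence number at most $5\lambda$. With both repairs your recursion proves exactly the stated bound.
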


\paragraph{$B$-rooted subgraphs and $(B,\cH)$-clean sets}
Let $G$ be a graph and $B \subseteq V(G)$.
Any induced subgraph of $G$ whose vertex set intersects $B$ is called \emph{$B$-rooted}.

Let $\cH$ be a family of graphs.
An \emph{induced $\cH$-packing} in $G$ is a set of pairwise anticomplete induced subgraphs of $G$, each isomorphic to a member of $\cH$.
An induced $\cH$-packing is \emph{$B$-rooted} if every member of the packing intersects $B$ (see~\cref{fig:B-rooted}).

\begin{figure}[t]
    \centering
    \includegraphics[width=0.45\linewidth]{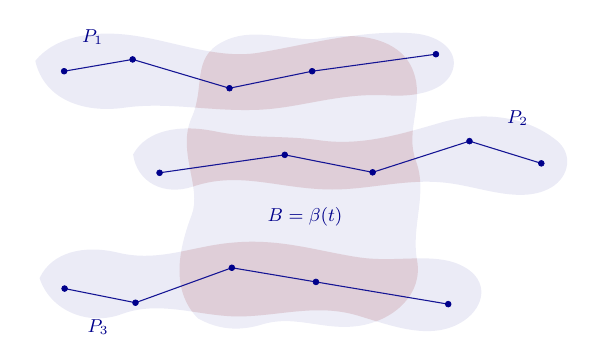}
    \caption{$B$-rooted induced $\{P_5\}$-packing.}
    \label{fig:B-rooted}
\end{figure}

We say a set $C\subseteq V(G)$ is \emph{$(B,\cH)$-clean} if
$G[C]$ has no $B$-rooted induced subgraph isomorphic to any member of $\cH$.
If $V(G)$ is $(B,\cH)$-clean, we say that $G$ is $(B,\cH)$-clean.

The size of the largest $B$-rooted induced $\cH$-packing in $G$ is denoted by $\pi_{\cH}(G,B)$.
For a tree decomposition $\cT=(T,\beta)$ of $G$, we define $\pi_{\cH}(G,\cT) = \max_{x \in V(T)} \pi_{\cH}(G, \beta(x))$.
Then, $\treepi_{\cH}(G) = \min_{\cT} \pi_{\cH}(G,\cT)$ where the minimum is taken over all tree decompositions $\cT$ of $G$.
When $\cH$ is a singleton $\{H\}$, we abbreviate the notation above by writing $H$ instead of $\{H\}$.

We also collect the following basic properties of induced packing treewidth which are proved in our earlier work:

\begin{lemma}[Nikabadi and Rzążewski~\cite{nikabadi2026induced}]\label{lem:structural-package}
Let $\cH,\cH'$ be families of graphs and $G,G'$ be graphs. The following hold.
\begin{enumerate}
\item\label{sprop:3} If $G'$ is an induced subgraph of $G$, then $\treepi_{\cH}(G')\leq \treepi_{\cH}(G)$.

\item\label{sprop:4} If $\cH'\subseteq \cH$, then $\treepi_{\cH'}(G)\leq \treepi_{\cH}(G)$.

\item\label{sprop:7} For every $t\geq 1$, one has $\treepi_{P_{t+1}}(G)\leq \treepi_{P_t}(G)$.

\end{enumerate}
\end{lemma}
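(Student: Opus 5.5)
All three items follow from one principle: fix a tree decomposition witnessing the right-hand side, and show that every rooted packing counted on the left-hand side yields a rooted packing of the same size counted on the right-hand side. The key steps are therefore transfers of packings, item by item.

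For \eqref{sprop:3}, let $(T,\beta)$ be a tree decomposition of $G$ with $\pi_{\cH}(G,(T,\beta))=\treepi_{\cH}(G)$.
\begin{itemize}
\item Restrict it to $G'$ by setting $\beta'(x)=\beta(x)\cap V(G')$. The three tree-decomposition axioms are inherited, since every edge of $G'$ is an edge of $G$ and the restricted subtrees are unchanged for vertices of $G'$.
\item Take a $\beta'(x)$-rooted induced $\cH$-packing in $G'$. Each member is an induced subgraph of $G'$, hence an induced subgraph of $G$, because $G'$ is induced.
\item Two members are anticomplete in $G'$, hence anticomplete in $G$, again because $G'$ is induced.
\item Each member meets $\beta'(x)\subseteq\beta(x)$.
\end{itemize}
So this is a $\beta(x)$-rooted induced $\cH$-packing in $G$. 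Hence $\pi_{\cH}(G',\beta'(x))\le \pi_{\cH}(G,\beta(x))$, and taking the maximum over $x$ gives the claim.

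For \eqref{sprop:4}, keep the same decomposition. Every $B$-rooted induced $\cH'$-packing is a $B$-rooted induced $\cH$-packing, since $\cH'\subseteq\cH$. Hence $\pi_{\cH'}(G,B)\le\pi_{\cH}(G,B)$ for every bag $B$, and the inequality follows after minimizing over decompositions.

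For \eqref{sprop:7}, again fix an optimal decomposition for $\treepi_{P_t}(G)$ and a bag $B$. Take a $B$-rooted induced $P_{t+1}$-packing $\{Q_1,\dots,Q_m\}$, and for each $i$ choose a vertex $v_i\in V(Q_i)\cap B$.
\begin{itemize}
\item Since $Q_i$ has $t+1\ge 2$ vertices, it has two distinct endpoints, so at least one endpoint $u_i$ differs from $v_i$.
\item The graph $Q_i-u_i$ is an induced path on $t$ vertices in $G$: deleting an endpoint of an induced path leaves an induced path.
\item It still contains $v_i\in B$.
\item The subgraphs $Q_i-u_i$ remain pairwise anticomplete, since they are subsets of pairwise anticomplete sets.
\end{itemize}
Thus we obtain a $B$-rooted induced $P_t$-packing of size $m$. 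Hence $\pi_{P_{t+1}}(G,B)\le\pi_{P_t}(G,B)$ for every bag, and the inequality follows.

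There is no real obstacle here. The only point needing care is in \eqref{sprop:7}: the vertex removed must not be the root vertex in $B$, and an endpoint can always be chosen to guarantee this.
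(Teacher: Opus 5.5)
Your proof is correct: restricting an optimal decomposition to $V(G')$, observing that every rooted $\cH'$-packing is already a rooted $\cH$-packing, and deleting from each member of a rooted $P_{t+1}$-packing an endpoint other than its chosen root are the routine bag-by-bag comparisons that establish the three items. The paper does not reprove this lemma; it imports it from the authors' earlier work, and your argument is the standard direct verification.
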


\section{Induced path-packing treewidth in graphs with no large bicliques}\label{sec:no-Ktt}
In this section, we prove~\cref{thm:going-to-tree-alpha}.

\goingtotreealpha*

Let us first explain the proof strategy.
By~\cref{lem:sep-vs-ta}, it suffices to show that, for every vertex-weighting $\wei$ of $G$, the weighted graph $(G,\wei)$ has a balanced separator whose independence number is bounded in terms of $a,t,k$ only.
So let us fix a vertex-weighting $\wei$.
The proof then consists of the following steps.
\begin{enumerate}[{Step }1.]
    \item We use~\cref{lem:balanced-bag} to find a bag $B$ of a tree decomposition witnessing $\treepi_{P_t}(G)\leq k$ that is a balanced separator of $(G,\wei)$.
    \item Being such a bag, $B$ satisfies $\pi_{P_t}(G,B)\leq k$, and so~\cref{lem:hit-vs-anti} provides a set $X$ of bounded independence number meeting every induced copy of $P_t$ that intersects $B$.
    Writing $G'=G\setminus X$ and $B'=B\setminus X$, the graph $G'$ is thus $(B',P_t)$-clean, and, after adjusting the weight function to some $\wei'$, the set $B'$ is a balanced separator of $(G',\wei')$.

    \item This is the main technical step.
    We show that in a $(B',P_t)$-clean graph that excludes \emph{pure subdivisions of a large biclique}, the balanced separator $B'$ can be traded for a balanced separator $Z$ of bounded independence number (\cref{lem:clean-root-separator}).
    The hypothesis on subdivisions is supplied by our assumptions: a $K_{a,a}$-free graph of bounded induced $P_t$-packing treewidth contains no pure subdivision of a large biclique (\cref{lem:exclude-pure-subdivision}).

    \item Finally, $X\cup Z$ is a balanced separator of $(G,\wei)$ whose independence number is bounded by a constant depending only on $a,t,k$, as required.
\end{enumerate}

\bigskip
Before we proceed to the proof, we need to introduce some intermediate results.
\subsection{Dominated separators and subdivisions of bicliques}

For $s\in\mathbb{N}$, we say that a graph $G$ has \defn{$s$-dominated balanced separators} if, for every vertex-weighting $\wei$ of $G$, there is a set $X\subseteq V(G)$ with $|X|\leq s$ such that $N[X]$ is a balanced separator of $(G,\wei)$.
The classical source of such separators is the Gyárfás path argument~\cite{gyarfas1987problems}; see also~\cite[Lemma~5.3]{chudnovsky2024quasi} or~\cite[Lemma~3.2]{chudnovsky2025dominated}.

\begin{lemma}\label{lem:gyarfas-path}
Let $(G,\wei)$ be a connected vertex-weighted graph, and let $v\in V(G)$.
There is an induced path $P$ starting at $v$ such that every component $C$ of $G\setminus N[P]$ satisfies $\wei(C)\leq \wei(G)/2$.
\end{lemma}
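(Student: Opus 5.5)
The plan is to construct the path greedily, one vertex at a time, keeping a heavy component that lies beyond it. If $\wei(G)=0$ or every component of $G\setminus N[v]$ already has weight at most $\wei(G)/2$, then $P=v$ works. Otherwise we maintain an induced path $P=v_1v_2\cdots v_m$ with $v_1=v$, together with a component $C$ of $G\setminus N[P]$ such that $\wei(C)>\wei(G)/2$ and only the last vertex $v_m$ of $P$ has a neighbour in $N(C)\cap N[P]$. More precisely, the invariant is that some vertex of $N(v_m)\setminus N[\{v_1,\dots,v_{m-1}\}]$ has a neighbour in $C$. Since weights are at least zero and $\wei(C)>\wei(G)/2$, the component $C$ is the unique component of weight greater than $\wei(G)/2$. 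So the path fails to be a valid $P$ precisely because of $C$.

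\emph{Extension step.} Let $C$ be the heavy component of $G\setminus N[P]$. Since $G$ is connected and $C\cap N[P]=\emptyset$, some vertex $u\in N(P)$ has a neighbour in $C$. We want to pick such a $u$ adjacent to $v_m$ only. To guarantee this, we keep a stronger invariant: $N(C)\subseteq N(v_m)\setminus N[\{v_1,\dots,v_{m-1}\}]$. Initially $C$ is a component of $G\setminus N[v_1]$, so $N(C)\subseteq N(v_1)$ and the invariant holds. Now pick $u\in N(C)$, set $v_{m+1}=u$, and let $P'=Pu$. Then $P'$ is an induced path: $u$ is adjacent to $v_m$, and $u\notin N[\{v_1,\dots,v_{m-1}\}]$ by the invariant.

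\emph{Verifying the invariant.} Consider $G\setminus N[P']$. Every component of it is contained in some component of $G\setminus N[P]$. Components inside components other than $C$ have weight at most $\wei(G)/2$. If every component inside $C$ has weight at most $\wei(G)/2$, we stop and output $P'$. Otherwise there is a unique heavy component $C'\subseteq C\setminus N[u]$. Any neighbour $w$ of $C'$ either lies in $C$ or lies outside $C$.
\begin{itemize}
\item If $w\in C$, then since $C'$ is a component of $C\setminus N(u)$, the vertex $w$ must be in $N(u)$. Also $w\notin N[P]$ because $w\in C$.
\item If $w\notin C$, then $w\in N(C)\subseteq N(v_m)$. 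But $w$ is not in $N[P']$ only if... this case cannot occur, since $C'$ is anticomplete to $N[P']\setminus C'$ except through neighbours. Note $N(C')\subseteq N[P']$, and since $C'\subseteq C$, $N(C')\subseteq C\cup N(C)$.
\end{itemize}
The second case needs repair: a vertex of $N(C)$ that is also adjacent to $C'$ might not be adjacent to $u$. To avoid this, I would instead use the standard invariant $N(C)\subseteq N(v_m)$ and choose $C'$ as a component of $G\setminus N[P']$ inside $C$. Then $N(C')\subseteq N(C)\cup (N(u)\cap C)$, and the vertices in $N(C)$ are already adjacent to $v_m$, not to earlier vertices. This permits the next vertex to be adjacent to $v_m$ rather than $v_{m+1}$, which breaks inducedness.

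The main obstacle is exactly this issue, and the fix is the usual Gyárfás trick. Choose $u\in N(C)$, and then in the next step replace $C$ by the heavy component $C'$ of $C\setminus N(u)$ with $N(C')\subseteq N(u)\cup N(C)$. To restore $N(C')\subseteq N(u)$, we instead choose $v_{m+1}$ more carefully. The cleanest form of the argument takes $v_{m+1}$ to be the next vertex and redefines the target component as the heavy component of $G\setminus N[P']$ lying in $C$. One then notes that $v_m$ has no neighbour in $C'$ and that any neighbour of $C'$ adjacent to some $v_i$ with $i\le m$ must be adjacent to $v_m$ alone. One also checks that such a vertex either is adjacent to $u$ or, if it is non-adjacent to $u$, can be chosen as $v_{m+1}$ instead by an exchange argument.

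Termination follows because $|C|$ strictly decreases: $u\notin C$, but $C'\subsetneq C$ because a neighbour of $u$ in $C$ is removed. Hence the process stops after at most $|V(G)|$ steps, and the final path $P$ starts at $v$, is induced, and every component of $G\setminus N[P]$ has weight at most $\wei(G)/2$.
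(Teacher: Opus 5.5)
There is a genuine gap: you never actually show that the path can be extended while staying induced. The strong invariant you switch to, $N(C)\subseteq N(v_m)\setminus N[\{v_1,\dots,v_{m-1}\}]$, is not preserved. As you notice yourself, after appending $u$ the new heavy component $C'\subseteq C\setminus N(u)$ can still be adjacent to vertices of $N(C)$ that are not adjacent to $u$.

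Your repair does not close this. The claim that every neighbour of $C'$ adjacent to some $v_i$ is adjacent to $v_m$ alone is just the unproved strong invariant restated. The ``exchange argument'' is never specified, and swapping $v_{m+1}$ changes $C'$, so it is unclear what it preserves. Also, ``pick $u\in N(C)$'' is wrong as written: a vertex of $N(C)$ lies in $N(P)$ but need not be adjacent to $v_m$, and may be adjacent to $v_1$. Then $Pu$ need not be an induced path, so $u$ must be a chosen witness, not an arbitrary neighbour of $C$.

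The missing point is that only your first, existential invariant is needed: some $u\in N(v_m)\setminus N[\{v_1,\dots,v_{m-1}\}]$ has a neighbour in the heavy component $C$ of $G\setminus N[P]$.
\begin{itemize}
\item \emph{Base case.} It holds because $N(C)\subseteq N(v_1)$, and $N(C)\neq\emptyset$ by connectivity of $G$.
\item \emph{Extension.} Append the witness $u$; the result is induced. Since $u\notin C$, any new heavy component $C'$ is a component of $C\setminus N(u)$. It is proper because $u$ has a neighbour in $C$.
\item \emph{New witness.} As $G[C]$ is connected, some $w\in C\setminus C'$ has a neighbour in $C'$, and then $w\in N(u)$. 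Because $w\in C$, we have $w\notin N[\{v_1,\dots,v_m\}]$. So $w$ is the new witness.
\end{itemize}
This is exactly the observation in your first bullet (``$w\in N(u)$ and $w\notin N[P]$ because $w\in C$''), used existentially rather than universally. Termination then follows from $C'\subsetneq C$, as you say.

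The paper does not prove this lemma; it cites the Gy\'arf\'as path argument. Its chromatic analogue, \cref{lem:gyarfas-extension-local}, uses the same mechanism: the next path vertex is chosen inside the current connected set, so it is automatically anticomplete to the earlier path vertices.
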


\Cref{lem:gyarfas-path} was in particular used to show the following.

\begin{lemma}[Nikabadi and Rzążewski~\cite{nikabadi2026induced}]\label{lem:treepi-Pt-dominated-separator}
For every $t\in\mathbb{N}$ and $k\in\mathbb{N}\cup\{0\}$, every graph $G$ satisfying $\treepi_{P_t}(G)\leq k$ has $(k+1)t$-dominated balanced separators.
\end{lemma}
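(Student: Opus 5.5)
Fix $t$, $k$, a graph $G$ with $\treepi_{P_t}(G)\le k$, and a vertex-weighting $\wei$. We may assume $G$ is connected: otherwise we apply the argument to a component of weight more than $\wei(G)/2$, if one exists, and take $X=\emptyset$ if none does. By \cref{lem:balanced-bag}, applied to a tree decomposition $(T,\beta)$ witnessing $\treepi_{P_t}(G)\le k$, there is a bag $B=\beta(x)$ that is a balanced separator of $(G,\wei)$, and $\pi_{P_t}(G,B)\le k$. The aim is to find at most $k+1$ induced paths, each on fewer than $t$ vertices, whose union $X$ satisfies that $N[X]$ separates as well as $B$ does. Then $|X|\le (k+1)t$.

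The construction is a repeated Gyárfás path argument. Suppose we have already built anticomplete induced paths $P_1,\dots,P_{i-1}$, each a $P_t$ meeting $B$. We look at the components of $G\setminus N[P_1\cup\dots\cup P_{i-1}]$. If every such component has weight at most $\wei(G)/2$, we stop. Otherwise let $D$ be the unique heavy component. We run \cref{lem:gyarfas-path} inside $G[D]$, with the restriction of $\wei$ and a start vertex in $D\cap B$. Such a vertex exists because $D$ is connected and heavier than $\wei(G)/2$, so it cannot lie inside one component of $G-B$.

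This gives an induced path $P$ from $v\in B$ such that components of $D\setminus N[P]$ have weight at most $\wei(D)/2\le\wei(G)/2$. Two cases follow. If $P$ has fewer than $t$ vertices, we add $P$ and finish: $N[P_1\cup\dots\cup P_{i-1}\cup P]$ is a balanced separator, since components outside $D$ were already light. If $P$ has at least $t$ vertices, we take $P_i$ to be its first $t$ vertices. This is an induced $P_t$ meeting $B$ at $v$, and it lies in $D$. Hence it is anticomplete to $P_1,\dots,P_{i-1}$, since $D$ avoids their closed neighbourhoods. We then iterate, with the new heavy component contained in $D\setminus N[P_i]$.

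There is one subtlety. In the second case we replaced $P$ by a prefix, so the heavy part need not shrink as the lemma promises. It still shrinks, because $D\setminus N[P_i]$ is a proper subset of $D$, so the process terminates. Moreover, every iteration of the second kind adds a new member to a $B$-rooted induced $P_t$-packing. Since $\pi_{P_t}(G,B)\le k$, there are at most $k$ such iterations, and after them the process must end in the first case, or the heavy component vanishes. So we use at most $k$ paths of $t$ vertices plus one path of fewer than $t$ vertices, and $|X|\le (k+1)t$ as required. The main point needing care is this: the start vertex must lie in $B$, and each $P_i$ must be anticomplete to the earlier ones. Both hold because the heavy component always meets $B$ and is disjoint from $N[P_1\cup\dots\cup P_{i-1}]$.
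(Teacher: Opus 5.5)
Your proof is correct and follows essentially the intended route, which the paper indicates comes from the Gyárfás path argument (\cref{lem:gyarfas-path}) and reuses in Case~(a) of \cref{lem:clean-root-separator}: take a bag $B$ of a witnessing decomposition that is a balanced separator, grow a Gyárfás path from a vertex of $B$ inside the heavy component, and use $\pi_{P_t}(G,B)\le k$ to allow at most $k$ truncated $t$-vertex paths, so $|X|\le kt+t-1\le (k+1)t$. Your separate termination argument via shrinking of the heavy component is harmless but unnecessary, because the packing bound already limits the second case to at most $k$ occurrences.
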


Let us first recall the relevant terminology.
Let $H$ and $H'$ be graphs. We say that $H'$ is a \defn{subdivision} of $H$ if, up to isomorphism, $H'$ can be obtained from $H$ by replacing every edge $uv\in E(H)$ with a $u$-$v$ path, in such a way that the introduced paths are pairwise internally vertex-disjoint and no internal vertex of such a path belongs to $V(H)$.
The vertices of $H'$ corresponding to the vertices of $H$ are called the \defn{branch vertices}, and the paths replacing the edges of $H$ are called the \defn{replacement paths}.
Following~\cite{hajebi2026tree}, we say that a subdivision $H'$ of $H$ is
\begin{itemize}
    \item a \defn{$(\leq \ell)$-subdivision} of $H$, where $\ell\in\mathbb{N}$, if every replacement path has at most $\ell$ vertices (i.e., length at most $\ell-1$);
    \item a \defn{proper subdivision} of $H$ if every replacement path has length at least two; and
    \item a \defn{pure subdivision} of $H$ if either $H'$ is isomorphic to $H$ or $H'$ is a proper subdivision of $H$.
\end{itemize}
A \defn{pure $(\leq \ell)$-subdivision} of $H$ is a subdivision of $H$ that is both pure and a $(\leq \ell)$-subdivision of $H$.

In the next lemma, we show that deleting few closed neighborhoods cannot disconnect a large subdivided biclique.

\begin{lemma}\label{lem:exclude-proper-subdivision}
Let $s\in\mathbb{N}$ and let $p\geq 2s+1$.
No proper subdivision of $K_{p,p}$ has $s$-dominated balanced separators.
\end{lemma}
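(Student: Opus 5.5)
We pick a bad weight function: put weight $1$ on each of the $2p$ branch vertices and weight $0$ on every subdivision vertex, so $\wei(G)=2p$. We then show that for every $X\subseteq V(G)$ with $|X|\le s$, some component of $G\setminus N[X]$ has weight more than $p$. Here $G$ denotes the proper subdivision of $K_{p,p}$, with sides $U=\{u_1,\dots,u_p\}$ and $W=\{w_1,\dots,w_p\}$, and $P_{ij}$ denotes the replacement path between $u_i$ and $w_j$, which has length at least two.

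First we estimate how much $N[X]$ can destroy. Take a vertex $x\in X$.
\begin{itemize}
\item If $x$ is a branch vertex, say $u_i$, then $N[x]$ consists of $u_i$ and the first internal vertex of each path $P_{ij}$. Since every replacement path has length at least two, no other branch vertex lies in $N[x]$. So $x$ removes one branch vertex, and meets only paths incident to $u_i$.
\item If $x$ is an internal vertex of $P_{ij}$, then $N[x]\subseteq V(P_{ij})$. It may contain $u_i$ and/or $w_j$, so it removes at most two branch vertices, and it meets only the single path $P_{ij}$.
\end{itemize}
Thus $N[X]$ contains at most $2s$ branch vertices. Moreover, the set of branch vertices that are either in $X$ or incident to a path meeting $N[X]$ has size at most $2s$: each $x\in X$ contributes at most two indices from $U\cup W$, namely itself, or the two ends of its path.

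Let $I\subseteq[p]$ be the indices $i$ such that $u_i$ is not "touched" by $X$ in this sense, and let $J\subseteq[p]$ be defined likewise for $W$. Then $|I|+|J|\ge 2p-2s$.

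Now pick any $i\in I$ and $j\in J$. Could $P_{ij}$ meet $N[X]$? That would require some $x\in X$ to lie on $P_{ij}$, or to be $u_i$ or $w_j$, or to be an adjacent branch vertex. Each of these would touch $u_i$ or $w_j$, contradicting the choice of $I$ and $J$. So every $P_{ij}$ with $i\in I$, $j\in J$ survives intact in $G\setminus N[X]$. Hence, if both $I$ and $J$ are nonempty, all of $\{u_i:i\in I\}\cup\{w_j:j\in J\}$ lies in a single component, which has weight at least $2p-2s\ge 2s+2>p$ when $p\le 4s+1$.

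The main obstacle is that this bound is too weak for large $p$. We need the surviving weight to exceed $p$ for every $p\ge 2s+1$, but $2p-2s>p$ only says $p>2s$, which does hold. So in fact $2p-2s>p\iff p>2s$, exactly the hypothesis.

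It remains to handle the case where $I$ or $J$ is empty. If $I=\emptyset$, then $p\le 2s$, a contradiction; similarly for $J$. So both are nonempty, and that single component has weight $|I|+|J|\ge 2p-2s>p=\wei(G)/2$. Hence $N[X]$ is not a balanced separator for this $\wei$.

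The delicate point is the careful counting of touched indices. I would double-check that an internal vertex adjacent to $u_i$ is counted as touching $u_i$, and that the unit weights on branch vertices are valid as a vertex-weighting. Both are fine.
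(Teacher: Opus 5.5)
Your argument is correct, but only after you replace your stated definition of ``touched'' with the one your counting sentence actually uses.

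\textbf{The definition of ``touched''.} Taken literally, ``in $X$ or incident to a path meeting $N[X]$'' does not give a set of size at most $2s$. For $X=\{u_1\}$, the set $N[X]\ni u_1$ meets every $P_{1j}$, so every $w_j$ is touched and $J=\emptyset$. The same happens when $X$ contains the neighbor of $u_1$ on some $P_{1j}$. What you need is a charging rule: a branch vertex $x\in X$ touches only itself, and an internal vertex of $P_{ij}$ touches exactly $u_i$ and $w_j$. Under this rule each $x$ touches at most one vertex on each side, so you get $|I|,|J|\geq p-s\geq s+1$ directly.

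\textbf{The survival check.} The case ``$x$ is an adjacent branch vertex'' is vacuous. The real third case is that $x\notin V(P_{ij})$ is adjacent to $u_i$ or $w_j$; no other case arises, because internal vertices of $P_{ij}$ have no neighbors off $P_{ij}$. Then $x$ is an internal vertex of some $P_{ij'}$ with $j'\neq j$, or of some $P_{i'j}$ with $i'\neq i$, and the rule makes it touch $u_i$ or $w_j$.

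\textbf{The final inequality.} The sentence involving $p\leq 4s+1$ should simply be replaced by $2p-2s>p\iff p>2s$.

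\textbf{Comparison with the paper.} With these repairs, your proof follows the same plan as the paper's, with different bookkeeping. In both, branch vertices carry unit weight, and one shows that the surviving branch vertices are linked by intact replacement paths.
\begin{itemize}
\item The paper weights only one side $A$ and deletes the branch vertices in $N[X]$, at most $s$ per side. It then separately bounds by $s$ the number of surviving pairs whose replacement path meets $N[X]$ (the ``broken'' pairs, at most one per $x$). Finally it uses $|B'|\geq s+1$ to find, for any $u,u'\in A'$, a common $v$ with both paths intact.
\item Your rule instead discards both ends of any path containing an internal vertex of $X$. Broken pairs never arise, every path between surviving branch vertices is intact, and the pigeonhole step disappears.
\end{itemize}
The cost is discarding a few branch vertices that the paper keeps. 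This is harmless, since both versions need exactly $p\geq 2s+1$. Whether you weight one side or both is immaterial: with your rule, weight on $U$ alone already gives a component of weight at least $p-s>p/2$.
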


\begin{proof}
Let $F$ be a proper subdivision of $K_{p,p}$.
Let $A,B$ be the partition of branch vertices of $F$ corresponding to the two sides of $K_{p,p}$, and denote by $Q_{u,v}$ the replacement path joining $u\in A$ to $v\in B$.
Give every vertex of $A$ weight $1$ and every other vertex of $F$ weight $0$; call this weighting $\wei$, so that $\wei(F)=p$.
Suppose for a contradiction that there is a set $X\subseteq V(F)$ with $|X|\leq s$ such that $N[X]$ is a balanced separator of $(F,\wei)$; that is, every component of $F-N[X]$ contains at most $p/2$ vertices of $A$.

Since the subdivision is proper, every replacement path has length at least two.
Hence no two branch vertices of $F$ are adjacent, and every internal vertex of a replacement path $Q$ has both of its neighbors on $Q$.
This gives, for every $x\in V(F)$:
\begin{enumerate}
\item[(i)] $N[x]$ contains at most one vertex of $A$ and at most one vertex of $B$; and
\item[(ii)] there is at most one pair $(u,v)\in A\times B$ such that $u,v\notin N[x]$ and $N[x]$ intersects $V(Q_{u,v})$.
\end{enumerate}

Put $A'=A\setminus N[X]$ and $B'=B\setminus N[X]$.
By (i), each $x\in X$ removes at most one vertex from each of $A$ and $B$, so
\[
    |A'|,\,|B'|\ \geq\ p-|X|\ \geq\ p-s.
\]
Since $p\geq 2s+1$, we have $p-s\geq s+1$ and $p-s>p/2$.

Call a pair $(u,v)\in A'\times B'$ \emph{broken} if $V(Q_{u,v})$ meets $N[X]$.
If $(u,v)$ is broken, then there is $x \in X$ such that $N[x]$ meets $V(Q_{u,v})$; moreover $u,v \notin N[x]$, since $u,v\notin N[X]$.
Thus, by (ii), for each $x \in X$ there is at most one pair $(u,v)$ that became broken due to $x$.
Hence there are at most $|X|\leq s$ broken pairs in total.

Now let $u,u'\in A'$ be arbitrary.
As at most $s$ pairs are broken and $|B'|\geq s+1$, there is some $v\in B'$ for which neither $(u,v)$ nor $(u',v)$ is broken.
The paths $Q_{u,v}$ and $Q_{u',v}$ then avoid $N[X]$ and share the vertex $v$, so $u$ and $u'$ lie in the same component of $F-N[X]$.
As $u$ and $u'$ were arbitrary, all of $A'$ lies in a single component of $F-N[X]$, whose weight is thus at least $|A'|\geq p-s>p/2$.
This contradicts the assumption that $N[X]$ is a balanced separator of $(F,\wei)$.
\end{proof}

Now, combining~\cref{lem:treepi-Pt-dominated-separator,lem:exclude-proper-subdivision}, we immediately obtain the following result.

\begin{lemma}\label{lem:exclude-pure-subdivision}
Let $a,t\in\mathbb{N}$ and $k\in\mathbb{N}\cup\{0\}$, and let $p\geq\max\{a,\,2(k+1)t+1\}$.
Then no $K_{a,a}$-free graph $G$ with $\treepi_{P_t}(G)\leq k$ contains an induced subgraph isomorphic to a pure subdivision of $K_{p,p}$.
\end{lemma}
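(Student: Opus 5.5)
Suppose for contradiction that $G$ is $K_{a,a}$-free, satisfies $\treepi_{P_t}(G)\leq k$, and contains an induced subgraph $F$ isomorphic to a pure subdivision of $K_{p,p}$. By the definition of a pure subdivision, either $F\cong K_{p,p}$ or $F$ is a proper subdivision of $K_{p,p}$. We rule out each case in turn.

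\emph{Case $F\cong K_{p,p}$.} Since $p\geq a$, the graph $F$ contains an induced $K_{a,a}$: take any $a$ vertices on each side. Hence $G$ contains an induced $K_{a,a}$, contradicting that $G$ is $K_{a,a}$-free.

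\emph{Case $F$ is a proper subdivision of $K_{p,p}$.} First, $F$ is an induced subgraph of $G$, so \cref{lem:structural-package}\eqref{sprop:3} gives $\treepi_{P_t}(F)\leq\treepi_{P_t}(G)\leq k$. Next, applying \cref{lem:treepi-Pt-dominated-separator} to $F$, we get that $F$ has $s$-dominated balanced separators with $s=(k+1)t$. Finally, $p\geq 2(k+1)t+1=2s+1$, so \cref{lem:exclude-proper-subdivision} says that no proper subdivision of $K_{p,p}$ has $s$-dominated balanced separators. This contradicts the previous step.

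Both cases are impossible, which proves the lemma. The only point that needs care is the transfer step in the second case. We pass to $F$ through heredity of $\treepi_{P_t}$ and then apply \cref{lem:treepi-Pt-dominated-separator} to $F$ itself. Dominated balanced separators of $G$ would not suffice, because the weighting in the proof of \cref{lem:exclude-proper-subdivision} lives on $F$, and the dominating set must lie inside $F$.
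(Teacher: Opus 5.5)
Your proposal is correct and follows essentially the same argument as the paper: $p\geq a$ rules out $F\cong K_{p,p}$, and then heredity of $\treepi_{P_t}$, \cref{lem:treepi-Pt-dominated-separator} with $s=(k+1)t$, and \cref{lem:exclude-proper-subdivision} (since $p\geq 2s+1$) rule out the proper-subdivision case. Your closing remark correctly explains why the paper also applies \cref{lem:treepi-Pt-dominated-separator} to $F$ itself rather than to $G$.
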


\begin{proof}
Suppose that some induced subgraph $F$ of $G$ is a pure subdivision of $K_{p,p}$.
Since $p\geq a$, the graph $K_{p,p}$ contains an induced copy of $K_{a,a}$, and so $F$ is not isomorphic to $K_{p,p}$ as $G$ is $K_{a,a}$-free.
Thus $F$ is a proper subdivision of $K_{p,p}$.
On the other hand, $\treepi_{P_t}(F)\leq\treepi_{P_t}(G)\leq k$ by~\cref{lem:structural-package}\eqref{sprop:3}, so~\cref{lem:treepi-Pt-dominated-separator} implies that $F$ has $s$-dominated balanced separators for $s=(k+1)t$.
As $p\geq 2s+1$, this contradicts~\cref{lem:exclude-proper-subdivision}.
\end{proof}

\subsection{Balanced separators in clean graphs}

This subsection is devoted to the main technical step, stated as~\cref{lem:clean-root-separator} below: in a $(B,P_t)$-clean graph excluding a pure subdivision of a large biclique, a balanced separator $B$ can be replaced by a balanced separator of bounded independence number.

We will need the result of Hajebi and Spirkl~\cite{hajebi2026tree}; it is a refinement of~\cref{lem:hit-vs-anti}: if the system itself has no hitting set of bounded independence number, then it produces two members for which the family of all \emph{short} paths between them has one.
For $\ell\in\mathbb{N}$, we denote by $\calP_G^{\leq \ell}(X,Y)$ the family of all $(X,Y)$-paths in $G$ with at most $\ell$ vertices.

\begin{lemma}[Hajebi and Spirkl~\cite{hajebi2026tree}]\label{lem:hit-vs-far}
For all $a,b,q,\ell\in\mathbb{N}$, there are constants
$
    c_{\ref{lem:hit-vs-far}}
    =c_{\ref{lem:hit-vs-far}}(a,b,q,\ell)\in\mathbb{N}$ and
    $d_{\ref{lem:hit-vs-far}}=d_{\ref{lem:hit-vs-far}}(a,b,q,\ell)\in\mathbb{N}$
such that, for every graph $G$ with no induced subgraph isomorphic to a pure $(\leq \ell)$-subdivision of $K_{a,a}$ and every $b$-system $\calS$ in $G$, one of the following holds:
\begin{enumerate}[label=\textup{(\alph*)}, leftmargin=8mm]
    \item there is a hitting set $X\subseteq V(G)$ for $\calS$ with
    $\alpha(G[X])<c_{\ref{lem:hit-vs-far}}$; or
    \item there is a $b$-system $\calQ\subseteq\calS$ with $|\calQ|\geq q$ such that, for all distinct $S_1,S_2\in\calQ$, the family $\calP_G^{\leq \ell}(S_1,S_2)$ has a hitting set
    $
        Y\subseteq V(G)\setminus(S_1\cup S_2)
    $
    with $\alpha(G[Y])<d_{\ref{lem:hit-vs-far}}$.
\end{enumerate}
\end{lemma}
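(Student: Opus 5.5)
Since a pure $(\leq\ell)$-subdivision of $K_{a,a}$ includes $K_{a,a}$ itself, the graph $G$ is $K_{a,a}$-free, and \cref{lem:hit-vs-anti} applies to $G$. The plan is a Ramsey argument on pairs of members, with the intended source of the forbidden subdivision being the ``bad'' pairs. Let $N$ be a large integer depending on $a,b,q,\ell$, to be fixed at the end. Suppose outcome (a) fails for $c_{\ref{lem:hit-vs-far}}\geq c_{\ref{lem:hit-vs-anti}}(a,b,N)$. Then \cref{lem:hit-vs-anti} yields an anticomplete $b$-system $\calQ'\subseteq\calS$ with $|\calQ'|\geq N$.

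Next, colour each pair $\{S_1,S_2\}\subseteq\calQ'$ \emph{good} or \emph{bad}. The pair is good if $\calP_G^{\leq\ell}(S_1,S_2)$ has a hitting set $Y\subseteq V(G)\setminus(S_1\cup S_2)$ with $\alpha(G[Y])<d$, and bad otherwise; here $d$ is a constant chosen later. By \cref{thm:ramsey}, if $N\geq\Ram(q,M)$, then either there are $q$ members that are pairwise good, which is outcome (b), or there are $M$ members $S_1,\dots,S_M$ that are pairwise bad. It remains to show that the pairwise-bad case yields an induced pure $(\leq\ell)$-subdivision of $K_{a,a}$, once $d$ and $M$ are large.

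Fix a bad pair $S_i,S_j$. Because $S_i$ and $S_j$ are anticomplete, every path in $\calP_G^{\leq\ell}(S_i,S_j)$ has nonempty interior. Its interior vertex sets form an $\ell$-system. A small-$\alpha$ hitting set of these interiors is a hitting set for the paths that avoids $S_i\cup S_j$. So, if $d\geq c_{\ref{lem:hit-vs-anti}}(a,\ell,r)$, then \cref{lem:hit-vs-anti} gives $r$ such paths with pairwise anticomplete interiors.

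The idea is to split $S_1,\dots,S_M$ into two groups, $A$-side and $B$-side, and route one chosen short path between each cross pair. The resulting structure should be an induced subdivision of $K_{a,a}$ in which each branch vertex is replaced by a set of size at most $b$. This needs several cleanup steps.
\begin{itemize}
\item Paths belonging to different pairs must be made anticomplete to each other. For this, I would use the large supply $r$ of alternatives for each pair, together with repeated Ramsey and pigeonhole arguments on the attachment patterns, to select paths greedily. If some path is forced to meet or touch many others, its at most $\ell$ vertices should serve as a small-$\alpha$ hitting set, contradicting badness. So $d$ must be chosen larger than everything accumulated during the greedy selection.
\item Each path must attach to its endpoint sets $S_i$ only at its ends.
\item Each branch set $S_i$ of size at most $b$ must be collapsed to a single vertex. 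Since $|S_i|\leq b$, pigeonhole over the finitely many possible attachment types (which vertex of $S_i$ the paths reach) lets us pass to a sub-biclique in which every path at $S_i$ starts from the same vertex $s_i$. The remaining vertices of $S_i$ are then discarded, or absorbed by induction on $b$.
\end{itemize}
Lengths stay at most $\ell$ throughout. Purity needs proper subdivision, but that is automatic: the $S_i$ are pairwise anticomplete, so branch vertices are nonadjacent and every replacement path has length at least $2$.

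I expect the main obstacle to be this cleanup: making the chosen paths for different pairs mutually anticomplete while keeping them induced and attached to the correct single branch vertices. The tension is that the constant $d$ has to absorb every set removed along the way, while Ramsey still needs to be applied to attachment patterns of bounded complexity. I would first attempt a nested Ramsey construction, choosing paths pair by pair and deleting bounded neighbourhoods of bounded-$\alpha$ sets before each choice; if that fails, I would switch to induction on $b$.
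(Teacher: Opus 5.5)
The paper does not prove this lemma; it is imported from Hajebi and Spirkl, so your argument has to stand on its own. Your reduction is sound, with one small caveat. For $\ell\geq2$ the graph is $K_{a,a}$-free. For $\ell=1$ no $(\leq1)$-subdivision exists, so the first sentence fails, but then (b) only asks for $q$ pairwise disjoint members and the lemma is trivial. \cref{lem:hit-vs-anti} gives a large anticomplete subfamily. Ramsey's theorem either gives $q$ pairwise good members, which is (b), or leaves $M$ pairwise bad ones. For each bad pair, \cref{lem:hit-vs-anti} applied to the interiors yields $r$ short $(S_i,S_j)$-paths with pairwise anticomplete interiors.

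However, the whole difficulty of the lemma is the next step: building an induced pure $(\leq\ell)$-subdivision of $K_{a,a}$ from pairwise bad members. There you give only a plan, and its stated mechanism fails. A path $P$ that \emph{touches} many alternatives of another pair (rather than meeting them) does not yield a small-$\alpha$ hitting set. Only $N[V(P)]$ meets those alternatives, and closed neighbourhoods of bounded sets can have unbounded independence number in $K_{a,a}$-free graphs. For instance, a single vertex may be adjacent to $p_1,\ldots,p_r$ on $r$ pairwise anticomplete paths $x-p_k-q_k-y$. For the same reason you cannot delete neighbourhoods of bounded-$\alpha$ sets and invoke badness again. Badness is usable exactly once, to produce the $r$ alternatives, and enlarging $d$ buys nothing further.

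The missing ingredient is that all later cleanup must come from $K_{a,a}$-freeness applied to these anticomplete families. Here is one way to do it.
\begin{itemize}
\item \emph{Path--path interference.} For alternatives $P$, $Q$ of two distinct pairs, say they conflict if their interiors meet or are joined by an edge. Colour each conflict by the positions of a witness on $P$ and on $Q$. A monochromatic $K_{a,a}$ in this bipartite conflict graph (bipartite Ramsey with $2\ell^2$ colours) either forces two alternatives of the same pair to share a vertex or gives an induced $K_{a,a}$. Hence the conflict graph is $K_{T,T}$-free for some $T=T(a,\ell)$, so it contains large conflict-free subfamilies. Iterating over the pairs of the $a^2$ families yields a conflict-free transversal.
\item \emph{Branch-set interference.} A set $S_k$ with $k\notin\{i,j\}$ may touch at least a $1/(4a)$ fraction of the alternatives of $\{i,j\}$. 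This cannot be cured by discarding alternatives. The same colouring argument, together with the K\H{o}v\'ari--S\'os--Tur\'an bound, shows that each pair has fewer than some $T_0$ such hub indices $k$. Ramsey's theorem for triples of indices then gives a large index set in which no index is a hub for two others.
\item \emph{Choosing branch vertices.} First prune each family to one endpoint pair and homogenise the endpoint positions by colouring pairs $i<j$. Then pick the $2a$ branch indices from the hub-free index set, with all $A$-side indices below all $B$-side ones, so that each branch set contributes a single branch vertex.
\end{itemize}
With these steps your outline becomes a proof; as written, the decisive step is missing.
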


We will only ever apply~\cref{lem:hit-vs-far} with $q=2$ and with the two bounds $b$ and $\ell$ set to a common value, so, for all $p,t\in\mathbb{N}$, we abbreviate
\[
    c(p,t)=c_{\ref{lem:hit-vs-far}}(p,t,2,t)
    \qquad\text{and}\qquad
    d(p,t)=d_{\ref{lem:hit-vs-far}}(p,t,2,t).
\]

Now, we proceed to the main lemma of this subsection.

\begin{lemma}\label{lem:clean-root-separator}
Let $p\in\mathbb{N}$, let $t\geq2$, and let $(G,\wei)$ be a vertex-weighted graph with no induced subgraph isomorphic to a pure $(\leq t)$-subdivision of $K_{p,p}$.
Let $B\subseteq V(G)$ be a balanced separator of $(G,\wei)$ such that $G$ is $(B,P_t)$-clean.
Then $(G,\wei)$ has a balanced separator $Z$ with
$
    \alpha(Z)<2c(p,t)+d(p,t).
$
\end{lemma}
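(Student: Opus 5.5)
The plan is to use \cref{lem:hit-vs-far}, with $b=\ell=t$ and $q=2$, on very simple set systems: singletons of vertices of $B$. The cleanness hypothesis then turns ``hitting all short paths'' into ``separating''. Throughout, write $c=c(p,t)$ and $d=d(p,t)$.

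\textbf{Key observation.} Since a pure subdivision of $K_{p,p}$ may be $K_{p,p}$ itself, $G$ is in particular $K_{p,p}$-free; more importantly, the hypothesis of \cref{lem:hit-vs-far} holds for $G$ and all its induced subgraphs. Because $G$ is $(B,P_t)$-clean, every induced path of $G$ meeting $B$ has fewer than $t$ vertices. So for any two distinct $b_1,b_2\in B$, \emph{every} induced $b_1$--$b_2$ path belongs to $\calP_G^{\leq t}(\{b_1\},\{b_2\})$. Hence a hitting set $Y\subseteq V(G)\setminus\{b_1,b_2\}$ of that family separates $b_1$ from $b_2$ in $G$: any $b_1$--$b_2$ path in $G-Y$ would contain an induced one, which $Y$ misses.

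\textbf{Main step.} Apply \cref{lem:hit-vs-far} to the $1$-system $\{\{b\}: b\in B\}$ (a $t$-system since $t\ge 2$).
\begin{itemize}
\item In outcome (a) the hitting set contains $B$, so $\alpha(B)<c$ and $Z=B$ works.
\item In outcome (b) we obtain $b_1,b_2\in B$ and $Y$ with $\alpha(Y)<d$ separating them. Let $D$ be the component of $G-Y$ of weight more than $\wei(G)/2$; if no such component exists, $Z=Y$ works.
\end{itemize}
Next we try to close off with $B\cap D$. If $\alpha(B\cap D)<c$, take $Z=Y\cup(B\cap D)$. This $Z$ is a balanced separator for two reasons:
\begin{itemize}
\item every component of $G-Z$ outside $D$ is a component of $G-Y$ other than $D$, so it has weight at most $\wei(G)-\wei(D)<\wei(G)/2$;
\item every component of $G-Z$ inside $D$ lies in a component of $G-B$.
\end{itemize}
Since $b_1$ and $b_2$ are in different components of $G-Y$, $D$ contains at most one of them, so $B\cap D$ is a proper subset of $B$. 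Otherwise, apply \cref{lem:hit-vs-far} again inside $G[D]$, which is $(B\cap D,P_t)$-clean and has $B\cap D$ as a balanced separator with respect to $\wei$ restricted to $D$.

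\textbf{Main obstacle.} This naive iteration accumulates one $Y$ per round, whereas the target bound $2c+d$ permits only one separator of type $Y$ and two of type $c$. I would therefore make an extremal choice instead of iterating. Among all pairs $b_1,b_2\in B$ and separators $Y$ with $\alpha(Y)<d$ as above, choose one for which the heavy component $D$ is inclusion-minimal; if no heavy component exists for some pair, we are already done. Then apply \cref{lem:hit-vs-far} to $\{\{b\}: b\in B\cap D\}$ inside $G[D\cup N(D)]$, possibly after adding the attachment set $N(D)\subseteq Y$ to the roots. I expect minimality to rule out outcome (b): a new separating pair inside $D$ would produce a strictly smaller heavy component. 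Outcome (a) then yields a set $X_2$ with $\alpha(X_2)<c$ covering $B\cap D$. The first application contributes a further hitting set $X_1$ with $\alpha(X_1)<c$, covering the part of $B$ adjacent to the boundary of $D$, which is needed so that components straddling $Y$ remain light. This gives $Z=X_1\cup X_2\cup Y$ with $\alpha(Z)<2c+d$.

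Making this minimality argument precise — in particular, checking that the new pair's separator stays inside $D$ and really shrinks the heavy side — is the step I expect to be hardest.
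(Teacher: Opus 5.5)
Your preliminary observations are correct. By cleanness, a set that hits $\calP_G^{\leq t}(\{b_1\},\{b_2\})$ and avoids $b_1,b_2$ separates them in $G$. Outcome (a) for the singleton system gives $\alpha(B)<c(p,t)$. And if some $Y$ with $\alpha(Y)<d(p,t)$ has a heavy component $D$ such that $B\cap D$ is covered by a set $X_2$ with $\alpha(X_2)<c(p,t)$, then $Y\cup X_2$ is already a balanced separator. No component of $G-(Y\cup X_2)$ can straddle $Y$, so your $X_1$ plays no role.

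The gap is in reaching that situation, and you flagged exactly this step as unresolved. Outcome (b) for singletons only says that two vertices of $B$ can be separated cheaply. That carries no information about weights, and your extremal choice does not turn it into progress:
\begin{itemize}
\item If you rerun \cref{lem:hit-vs-far} in $G$ on singletons of $B\cap D$ and get $b_1',b_2'$ and $Y'$, the heavy component of $G-Y'$ can leave $D$ through $Y\setminus Y'$. So it need not be contained in $D$, and inclusion-minimality of $D$ is not contradicted.
\item If you rerun it in $G[D\cup N(D)]$, then $Y'$ separates $b_1',b_2'$ only inside that subgraph, since paths can exit through $N(D)$ and come back. Turning it into a separator of $G$ requires adding part of $Y$. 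The resulting set has independence number bounded only by $2d(p,t)$, so it lies outside the family you minimized over.
\end{itemize}
Each further round adds another $Y$, with no bound on the number of rounds. As it stands, the proposal is not a proof.

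The missing idea is to feed \cref{lem:hit-vs-far} a system whose members carry their own balance certificate. You then use the Gy\'arf\'as path argument (\cref{lem:gyarfas-path}) to show that such members exist wherever the weight is. The paper takes $\calS$ to be the vertex sets of \emph{dominant} paths. These are induced paths $S$ on at most $t-1$ vertices meeting $B$ for which $N_G[S]\cup X_S$ is a balanced separator, for some $X_S$ with $\alpha(X_S)<c(p,t)$.
\begin{itemize}
\item In outcome (a), suppose the hitting set $X$ were not balanced. Its heavy component $C$ must meet $B$, because $B$ is balanced. A Gy\'arf\'as path of $C$ started in $C\cap B$ has at most $t-1$ vertices by cleanness, is dominant with witness $X$, and avoids $X$. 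This is a contradiction, so $Z=X$ works.
\item In outcome (b), with dominant $S_1,S_2$ and $Y$, the set $Z=X_{S_1}\cup X_{S_2}\cup Y$ is balanced. A heavy component $C$ of $G-Z$ must meet both $N_G[S_1]$ and $N_G[S_2]$, so $G[C\cup S_1\cup S_2]$ is connected. A shortest path in it from $S_1\cap B$ to $S_2$ is short by cleanness. It yields a member of $\calP_G^{\leq t}(S_1,S_2)$ that $Y$ misses, a contradiction.
\end{itemize}
This gives the bound $2c(p,t)+d(p,t)$ from a single application of the lemma, with no iteration or extremal choice.
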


\begin{proof}
Call an induced path $S$ in $G$ \defn{dominant} if
\begin{itemize}
    \item $S$ has at most $t-1$ vertices and $S\cap B\neq\emptyset$; and
    \item there is a set $X_S\subseteq V(G)$ with $\alpha(X_S)<c(p,t)$ such that $N_G[S]\cup X_S$ is a balanced separator of $(G,\wei)$.
\end{itemize}
Note that the assumption $t\geq2$ is what makes the first condition satisfiable.
Let $\calS$ be the family of vertex sets of all dominant paths; since every member of $\calS$ is nonempty and has at most $t-1$ vertices, $\calS$ is a $t$-system in $G$.
So we may apply~\cref{lem:hit-vs-far} to $\calS$ with parameters $a=p$, $b=t$, $q=2$, and $\ell=t$.
We distinguish two cases (a) and (b), depending on which outcome of~\cref{lem:hit-vs-far} holds.

\medskip
\noindent{\textbf{Case (a).}} Suppose there is a hitting set $X$ for $\calS$ with $\alpha(X)<c(p,t)$.

\begin{claim}
 $X$ is a balanced separator of $(G,\wei)$.
\end{claim}
\begin{claimproof}
Suppose not and let $C$ be the unique component of $G - X$ with $\wei(C) > \wei(G)/2$.
Then $V(C)\cap B\neq\emptyset$: otherwise $C$, being connected and disjoint from $B$, would be contained in a component of $G-B$, contradicting that $B$ is a balanced separator of $(G,\wei)$.
So let $v \in V(C)\cap B$ and apply \cref{lem:gyarfas-path} to $(C,\wei)$ and $v$ to obtain an induced path $P$ of $C$ starting at $v$ such that every component of $C-N_C[P]$ has weight at most $\wei(C)/2\leq\wei(G)/2$.
We claim that $N_G[P] \cup X$ is a balanced separator of $(G,\wei)$.
Indeed, every component of $G-(N_G[P]\cup X)$ is contained either in a component of $G-X$ distinct from $C$, of weight at most $\wei(G)-\wei(C)<\wei(G)/2$, or in $C\setminus N_G[P]\subseteq C-N_C[P]$, and hence in a component of $C-N_C[P]$.

Since $P$ is induced, has an end in $B$, and $G$ is $(B,P_t)$-clean, we have $1\leq|V(P)|\leq t-1$: otherwise the first $t$ vertices of $P$ would induce a copy of $P_t$ meeting $B$.
Hence $P$ is dominant, witnessed by $X_{P}=X$.
But $P$ is a path in $G-X$, so it is disjoint from $X$.
This contradicts the fact that $X$ hits $\calS$.
\end{claimproof}

As $\alpha(X)<c(p,t)\leq2c(p,t)+d(p,t)$, we can set $Z=X$ and the proof of Case (a) is complete.

\medskip
\noindent{\textbf{Case (b).}}
Suppose now that there are two distinct members $S_1,S_2$ of $\calS$ and a set $Y\subseteq V(G)\setminus(S_1\cup S_2)$ with $\alpha(Y)<d(p,t)$ that hits $\calP_G^{\leq t}(S_1,S_2)$.
Note that $S_1\cap S_2=\emptyset$: a vertex of $S_1\cap S_2$ would form an $(S_1,S_2)$-path of length $0$, which $Y$ cannot hit.
For $i\in\{1,2\}$, let $X_i=X_{S_i}$ be as in the definition of a dominant path, and put
$
    Z=X_1\cup X_2\cup Y,
$
so that
\[
    \alpha(Z)
    \leq \alpha(X_1)+\alpha(X_2)+\alpha(Y)
    <2c(p,t)+d(p,t).
\]

We are left with proving the following claim.

\begin{claim}
 $Z$ is a balanced separator of $(G,\wei)$.
\end{claim}
\begin{claimproof}
For contradiction, suppose otherwise and let $C$ be the unique component of $G-Z$ with $\wei(C)>\wei(G)/2$.
Suppose first that $C \cap N_G[S_i]=\emptyset$ for some $i\in\{1,2\}$.
As $C$ is also disjoint from $X_i\subseteq Z$, the graph $C$ is then contained in a component of $G\setminus(N_G[S_i]\cup X_i)$, and hence $\wei(C)\leq \wei(G)/2$,
a contradiction.

So assume that $C$ meets both $N_G[S_1]$ and $N_G[S_2]$ and define $G' = G[C \cup S_1\cup S_2]$.
Note that $G'$ is connected, since each of the sets $C,S_1,S_2$ induces a connected subgraph of $G$ and $C$ meets both $N_G[S_1]$ and $N_G[S_2]$.

We claim that there is a path $Q'\in \calP_G^{\leq t}(S_1,S_2)$ with $V(Q')\subseteq V(G')$.
Recall that $S_1$ intersects $B$.
Let $Q$ be a shortest path in $G'$ from $S_1\cap B$ to $S_2$; it exists because $G'$ is connected.
Since $Q$ is shortest, it is induced in $G'$ and hence in $G$, and its last vertex is its only vertex in $S_2$.
Since $Q$ has an end in $B$ and $G$ is $(B,P_t)$-clean, $Q$ has at most $t-1$ vertices.

We obtain $Q'$ by (possibly) removing some prefix of $Q$, so that $Q'$ starts at the last vertex of $S_1$ appearing on $Q$.
The interior of $Q'$ is then disjoint from $S_1$ by this choice, and from $S_2$ because only the last vertex of $Q$ lies in $S_2$.
Consequently, $Q'$ is an $(S_1,S_2)$-path with at most $t-1$ vertices, and it is induced in $G$ because $Q$ is.

Now, by the definition of $Y$, it intersects $Q'$, which is impossible: $Y$ is disjoint from $S_1\cup S_2$ by its choice, and from $C$ because $Y\subseteq Z$.
Hence this case does not occur, and $Z$ is a balanced separator of $(G,\wei)$.
\end{claimproof}

This completes the proof of the lemma.
\end{proof}

\subsection{Proofs of the main results}
Finally, we are ready to prove~\cref{thm:going-to-tree-alpha}.
\begin{proof}[Proof of~\cref{thm:going-to-tree-alpha}]
We may assume that $t\geq2$: a $B$-rooted induced $P_1$-packing in a graph $G$ is precisely an independent set in $G[B]$, so $\treepi_{P_1}(G)=\treealpha(G)$ and the case $t=1$ holds with $\zeta(a,1,k)=k$.

Set
\[
    p=\max\{a,\,2(k+1)t+1\},
    \qquad
    \eta=c_{\ref{lem:hit-vs-anti}}(a,t,k+1),
    \qquad
    c=c(p,t),
    \qquad
    d=d(p,t),
\]
and let us show that $\zeta=5(\eta+2c+d)$ is sufficient.
By~\cref{lem:sep-vs-ta}, it is enough to prove that, for every vertex-weighting $\wei$ of $G$, the weighted graph $(G,\wei)$ has a balanced separator $Z^*$ with $\alpha(G[Z^*])\leq\eta+2c+d$.

So let $\wei$ be a vertex-weighting of $G$, and fix a tree decomposition $\calT=(T,\beta)$ of $G$ with
$
    \pi_{P_t}(G,\calT)\leq k.
$
By~\cref{lem:balanced-bag}, there is a node $x\in V(T)$ such that $B=\beta(x)$ is a balanced separator of $(G,\wei)$; in particular,
$
    \pi_{P_t}(G,B)\leq k.
$
Let $\calP$ be the $t$-system consisting of the vertex sets of all induced copies of $P_t$ in $G$ that intersect $B$, and apply~\cref{lem:hit-vs-anti} to $\calP$ with parameters $a,t,k+1$.
An anticomplete subsystem of $\calP$ of cardinality at least $k+1$ would be a $B$-rooted induced $P_t$-packing of size at least $k+1$, contrary to $\pi_{P_t}(G,B)\leq k$.
Hence there is a hitting set $X\subseteq V(G)$ for $\calP$ with
$
    \alpha(G[X])<\eta.
$

Let $G'=G\setminus X$ and $B'=B\setminus X$.
Then $G'$ is $(B',P_t)$-clean, since every induced copy of $P_t$ in $G'$ meeting $B'$ would be a member of $\calP$ avoiding $X$.
Moreover,
$
    G'\setminus B'=G\setminus(B\cup X),
$
so every component of $G'\setminus B'$ is contained in a component of $G\setminus B$ and therefore has weight at most $\wei(G)/2$.
If $B'=\emptyset$, then $X$ itself is a balanced separator of $(G,\wei)$ with $\alpha(G[X])<\eta$, and we are done.
Thus we may assume that $B'\neq\emptyset$.

Choose $v_0\in B'$ and define a vertex-weighting $\wei'$ of $G'$ by
$
    \wei'(v_0)=\wei(v_0)+\wei(X)
$
and $\wei'(v)=\wei(v)$ for every $v\in V(G')\setminus\{v_0\}$, so that
$
    \wei'(G')=\wei(G).
$
Every component of $G'\setminus B'$ avoids $v_0$, and hence its $\wei'$-weight equals its $\wei$-weight; consequently $B'$ is a balanced separator of $(G',\wei')$.

By~\cref{lem:exclude-pure-subdivision}, the graph $G$, and hence also its induced subgraph $G'$, contains no induced subgraph isomorphic to a pure subdivision of $K_{p,p}$; in particular, none isomorphic to a pure $(\leq t)$-subdivision of $K_{p,p}$.
Applying~\cref{lem:clean-root-separator} to $(G',\wei')$ and $B'$, we obtain a balanced separator $Z$ of $(G',\wei')$ with
$
    \alpha(G'[Z])<2c+d.
$
Since $\wei(v)\leq\wei'(v)$ for every $v\in V(G')$ and $\wei'(G')=\wei(G)$, every component $C$ of $G'\setminus Z$ satisfies
\[
    \wei(C)
    \leq \wei'(C)
    \leq \frac{\wei'(G')}{2}
    =\frac{\wei(G)}{2}.
\]
As
$
    G\setminus(X\cup Z)=G'\setminus Z,
$
the set $Z^*=X\cup Z$ is therefore a balanced separator of $(G,\wei)$, and
\[
    \alpha(G[Z^*])
    \leq \alpha(G[X])+\alpha(G'[Z])
    \leq \eta+2c+d,
\]
as required.
\end{proof}

\paragraph{Consequences.}
We conclude the section with two consequences of~\cref{thm:going-to-tree-alpha}.
The first one is~\cref{cor:treepi-tw-omega}, announced in the introduction, which we restate.

\treepitwomega*

\begin{proof}
The implication $(1)\Rightarrow(2)$ is a simple consequence of Ramsey's
theorem, as observed by Dallard, Milani\v{c}, and \v{S}torgel~\cite{dallard2024treewidthII}.

For $(2)\Rightarrow(3)$, let $f$ be a
$(\tw,\omega)$-bounding function for $\mathcal{G}$, and choose
an integer $a>\max\{1,f(2)\}$.
If some graph in $\mathcal{G}$ contained an induced copy of
$K_{a,a}$, then heredity would imply that
$K_{a,a}\in\mathcal{G}$. This is impossible, since
$\tw(K_{a,a})=a>f(2)$ and $\omega(K_{a,a})=2$.
Thus, every graph in $\mathcal{G}$ is $K_{a,a}$-free.

For $(3)\Rightarrow(1)$, choose
$k\in\mathbb{N}\cup\{0\}$ such that
$\treepi_{P_t}(G)\leq k$ for every $G\in\mathcal{G}$.
By~\cref{thm:going-to-tree-alpha}, there is a constant
$\zeta=\zeta(a,t,k)$ such that
$\treealpha(G)\leq\zeta$ for every $G\in\mathcal{G}$.
\end{proof}

The second consequence is immediate from~\cref{lem:structural-package}\eqref{sprop:4}: if a family
$\cH$ contains some path $P_t$, then $\treepi_{P_t}(G)\leq\treepi_{\cH}(G)$ for every
graph $G$, so~\cref{thm:going-to-tree-alpha} applies verbatim.

\begin{corollary}\label{cor:family-containing-a-path}
Let $\cH$ be a family of graphs containing at least one path, and let
$a\in\mathbb{N}$ and $k\in\mathbb{N}\cup\{0\}$. There is a constant
$\zeta'=\zeta'(\cH,a,k)$ such that every $K_{a,a}$-free graph $G$ with
$\treepi_{\cH}(G)\leq k$ satisfies $\treealpha(G)\leq\zeta'$.
\end{corollary}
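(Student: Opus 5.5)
The plan is to reduce directly to \cref{thm:going-to-tree-alpha} via monotonicity in the family $\cH$. Fix a path $P_t\in\cH$; if $\cH$ contains several paths, take any one, say the shortest, though the choice is irrelevant. Since $\{P_t\}\subseteq\cH$, \cref{lem:structural-package}\eqref{sprop:4} gives
\[
\treepi_{P_t}(G)\leq\treepi_{\cH}(G)\leq k
\]
for every graph $G$ with $\treepi_{\cH}(G)\leq k$.

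It is worth checking that this monotonicity is really immediate from the definitions. Every $B$-rooted induced $P_t$-packing is also a $B$-rooted induced $\cH$-packing, because each member is isomorphic to $P_t\in\cH$. Hence $\pi_{P_t}(G,B)\leq\pi_{\cH}(G,B)$ for every $B$. Taking the maximum over the bags of any tree decomposition, and then the minimum over all tree decompositions, yields the inequality above.

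With this in hand, any $K_{a,a}$-free graph $G$ with $\treepi_{\cH}(G)\leq k$ is a $K_{a,a}$-free graph with $\treepi_{P_t}(G)\leq k$. \Cref{thm:going-to-tree-alpha} then gives $\treealpha(G)\leq\zeta(a,t,k)$. We set $\zeta'(\cH,a,k):=\zeta(a,t,k)$, where $t$ is the number of vertices of the chosen path; this depends only on $\cH$, $a$, and $k$.

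There is no substantive obstacle here. The only point needing care is that $\zeta'$ must depend only on $\cH$ and not on $G$. This is ensured by fixing the path $P_t\in\cH$ once, before considering any particular graph.
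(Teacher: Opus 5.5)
Your proposal is correct and is exactly the paper's argument: fix a path $P_t\in\cH$, use the family-monotonicity of \cref{lem:structural-package}\eqref{sprop:4} to get $\treepi_{P_t}(G)\leq\treepi_{\cH}(G)\leq k$, and apply \cref{thm:going-to-tree-alpha} with $\zeta'(\cH,a,k)=\zeta(a,t,k)$. Your direct check of the monotonicity via $\pi_{P_t}(G,B)\leq\pi_{\cH}(G,B)$ for every $B$ is a sound, if optional, addition.
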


\section[Chi-boundedness and Erd\H{o}s--Hajnal property]{$\chi$-boundedness and Erd\H{o}s--Hajnal property}\label{sec:chi-bounded}
In this section, we prove~\cref{thm:chi-bounded,thm:EH}.
We use without further mention that $\treepi_H$ is monotone under taking induced subgraphs (see~(\ref{sprop:3}) in~\cref{lem:structural-package}), and that $\pi_H(G,B')\leq\pi_H(G,B)$ whenever $B'\subseteq B\subseteq V(G)$, which is immediate from the definition.

\subsection{Induced path-packing treewidth}
We first prove the first assertion of~\cref{thm:chi-bounded}. The case
$t=1$ follows from the standard Ramsey-theoretic bound for
tree-independence number; see, for example,~\cite[Lemma~3.2]{dallard2024treewidth}.
Indeed, let $G$ be a graph with
$\treepi_{P_1}(G)=\ta(G)\leq k$ and $\omega(G)\leq r$, and let
$\mathcal T=(T,\beta)$ be a tree decomposition of $G$ such that
$\alpha(\beta(x))\leq k$ for every $x\in V(T)$. Since
$\omega(\beta(x))\leq r$, Ramsey's theorem~(\cref{thm:ramsey}) gives
$|\beta(x)|\leq \Ram(k+1,r+1)-1$ for every $x\in V(T)$. Consequently,
$\tw(G)\leq \Ram(k+1,r+1)-2$, and therefore
$\chi(G)\leq \Ram(k+1,r+1)-1$.
Thus, throughout the remainder of this subsection, we fix an integer
$t\geq2$.

Gyárfás~\cite[Theorem~2.4]{gyarfas1987problems} proved that for $t \geq 2$, every $P_t$-free graph $G$ satisfies
$\chi(G)\leq g^{\mathsf{path}}_t(\omega(G))$, where
\[
	g^{\mathsf{path}}_t(r)=(t-1)^{r-1}.
\]

We shall repeatedly use the following localized form of the
Gyárfás path construction underlying the proof of~\cite[Theorem~2.4]{gyarfas1987problems}.
Its proof follows the same
induction as Chudnovsky, Scott, Seymour, and
Spirkl~\cite[Lemma~2.3]{chudnovsky2020longoddholes}.

\begin{lemma}\label{lem:gyarfas-extension-local}
Let $a\in\mathbb{N}$ and $q\in\mathbb{N}\cup\{0\}$. Let $G$ be a graph,
let $C\subseteq V(G)$ be such that $G[C]$ is connected, and let
$x_0\in V(G)\setminus C$ have a neighbor in $C$. Suppose that
$\chi(G[N_G(z)\cap C])\leq a$ for every $z\in C\cup\{x_0\}$ and that
$\chi(G[C])>qa$. Then there is an induced path $x_0-x_1-\cdots-x_q$ in
$G$ with $x_1,\ldots,x_q\in C$, and a set
$C'\subseteq C\setminus\{x_1,\ldots,x_q\}$, such that
\begin{itemize}
    \item $G[C']$ is connected;
    \item $x_q$ has a neighbor in $C'$;
    \item $x_0,\ldots,x_{q-1}$ are anticomplete to $C'$; and
    \item $\chi(G[C'])\geq\chi(G[C])-qa$.
\end{itemize}
\end{lemma}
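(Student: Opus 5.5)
We argue by induction on $q$, following the Gyárfás-type growth argument. For $q=0$ the path is just $x_0$, and we take $C'=C$. Then $G[C']$ is connected, $x_0$ has a neighbor in $C'$ by hypothesis, the anticompleteness condition is vacuous, and $\chi(G[C'])=\chi(G[C])$. So suppose $q\geq1$, and assume the statement for $q-1$, applied with the same $a$, $G$, $C$ and $x_0$. This is legitimate because $\chi(G[C])>qa\geq(q-1)a$. The induction gives an induced path $x_0-\cdots-x_{q-1}$ with $x_1,\ldots,x_{q-1}\in C$, together with a connected set $C''\subseteq C\setminus\{x_1,\ldots,x_{q-1}\}$. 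The set $C''$ has the following properties: $x_{q-1}$ has a neighbor in $C''$; $x_0,\ldots,x_{q-2}$ are anticomplete to $C''$; and $\chi(G[C''])\geq\chi(G[C])-(q-1)a>a$.

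For the extension step, let $N=N_G(x_{q-1})\cap C''$, which is nonempty. Since $x_{q-1}\in C\cup\{x_0\}$, the hypothesis gives $\chi(G[N])\leq a$. Hence $\chi(G[C''\setminus N])\geq\chi(G[C''])-a\geq\chi(G[C])-qa>0$, so $C''\setminus N$ is nonempty. Choose a component $D$ of $G[C''\setminus N]$ with $\chi(D)=\chi(G[C''\setminus N])$; this is possible because the chromatic number of a graph is the maximum over its components. Since $G[C'']$ is connected and $N\neq\emptyset$, some vertex $x_q\in N$ has a neighbor in $D$. Set $C'=V(D)$.

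It remains to verify the required properties. The path $x_0-\cdots-x_{q-1}-x_q$ is induced: $x_q\in C''$ is nonadjacent to $x_0,\ldots,x_{q-2}$, it is adjacent to $x_{q-1}$, and $x_q\notin\{x_1,\ldots,x_{q-1}\}$. Also $x_q\in C$. The set $C'$ lies in $C\setminus\{x_1,\ldots,x_q\}$ because $C'\subseteq C''\setminus N$ and $x_q\in N$. Moreover $x_0,\ldots,x_{q-2}$ are anticomplete to $C'\subseteq C''$, and $x_{q-1}$ is anticomplete to $C'$ because $C'$ avoids $N$. Finally, $G[C']$ is connected, $x_q$ has a neighbor in $C'$, and $\chi(G[C'])\geq\chi(G[C])-qa$.

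The only delicate point is the order of choices. We must fix the high-chromatic component $D$ before choosing $x_q$, so that $x_q$ is forced to have a neighbor in $D$ through the connectivity of $G[C'']$. We must also confirm that the hypothesis $\chi(N_G(z)\cap C)\leq a$ applies to $z=x_{q-1}$, which holds since $x_{q-1}\in C\cup\{x_0\}$. Everything else is routine bookkeeping.
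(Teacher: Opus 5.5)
Your proof is correct and follows essentially the same route as the paper: induction on $q$, deleting $N_G(x_{q-1})\cap C''$ at a cost of at most $a$ colors, taking a component of maximum chromatic number as $C'$, and then choosing $x_q$ in that neighborhood with a neighbor in $C'$ via the connectivity of $G[C'']$. The one step you compress is that a vertex of $C''\setminus C'$ adjacent to $C'$ must lie in $N$, because $C'$ is a full component of $G[C''\setminus N]$; the paper spells this out, and it holds.
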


\begin{proof}
We proceed by induction on $q$. If $q=0$, we take $C'=C$: the first two
conditions are part of the hypotheses, the third is vacuous, and the
fourth holds with equality.

Now, let $q\geq1$. Since $\chi(G[C])>qa\geq(q-1)a$, the induction
hypothesis applied with $q-1$ in place of $q$ gives an induced path
$x_0-x_1-\cdots-x_{q-1}$ with $x_1,\ldots,x_{q-1}\in C$, and a set
$C''\subseteq C\setminus\{x_1,\ldots,x_{q-1}\}$ such that $G[C'']$ is
connected, $x_{q-1}$ has a neighbor in $C''$, the vertices
$x_0,\ldots,x_{q-2}$ are anticomplete to $C''$, and
$\chi(G[C''])\geq\chi(G[C])-(q-1)a$.

Set $Z=C''\setminus N_G(x_{q-1})$. Since $x_{q-1}\in C\cup\{x_0\}$ and
$C''\subseteq C$, the assumption gives
$\chi\bigl(G[N_G(x_{q-1})\cap C'']\bigr)\leq a$. As $Z$ and
$N_G(x_{q-1})\cap C''$ partition $C''$, using disjoint palettes for these
two sets yields
$
\chi(G[Z])\geq\chi(G[C''])-a\geq\chi(G[C])-qa>0.
$
Let $C'$ be the vertex set of a component of $G[Z]$ with maximum
chromatic number. Since the chromatic number of a graph is the maximum of
the chromatic numbers of its components, $G[C']$ is connected and
$\chi(G[C'])=\chi(G[Z])\geq\chi(G[C])-qa$; in particular,
$C'\neq\emptyset$.

We next choose the last vertex of the required path. The set
$N_G(x_{q-1})\cap C''$ is nonempty, by the induction hypothesis, and is
disjoint from $C'\subseteq Z$; hence $C'\subsetneq C''$. As $G[C'']$ is
connected, some vertex of $C''\setminus C'$ has a neighbor in $C'$; and
as $C'$ is a component of $G[Z]$, every such vertex lies in
$N_G(x_{q-1})$. Let $x_q$ be one, so that $x_q\in N_G(x_{q-1})\cap C''$
has a neighbor in $C'$.

It remains to verify the conclusions. The path $x_0-x_1-\cdots-x_{q-1}$
is induced and $x_q$ is adjacent to $x_{q-1}$; moreover $x_q\in C''$,
while $x_0,\ldots,x_{q-2}$ are anticomplete to $C''$. Hence
$x_0-x_1-\cdots-x_q$ is an induced path, and
$x_q\in C''\subseteq C\setminus\{x_1,\ldots,x_{q-1}\}$ gives
$x_1,\ldots,x_q\in C$. Finally,
$C'\subseteq C''\setminus N_G(x_{q-1})$. Thus, $C'$ contains none of
$x_1,\ldots,x_{q-1}$, because $C''$ contains none of them, and it does
not contain $x_q$, because $x_q\in N_G(x_{q-1})$; that is,
$C'\subseteq C\setminus\{x_1,\ldots,x_q\}$. For the same reason,
$x_{q-1}$ is anticomplete to $C'$, while $x_0,\ldots,x_{q-2}$ are
anticomplete to $C'$ because they are anticomplete to $C''$. This proves
the lemma.
\end{proof}

We next need a definition which will be used in our layering argument. A \defn{rooted configuration} in a graph $G$ is a pair $\Xi=(C,D)$ such that
\begin{itemize}
	\item $C\subseteq V(G)$; and
	\item $D=(d_1,\ldots,d_m)$ is a sequence of distinct vertices whose underlying set is an independent subset of $V(G)\setminus C$ and dominates $C$.
\end{itemize}
In set-theoretic expressions, we identify $D$ with its underlying set.
For $v\in C$, let $p_{\Xi}(v)$ be the first neighbor of $v$ in the ordering $d_1,\ldots,d_m$. For $a\in\mathbb{N}$, the configuration $\Xi$ is \defn{$a$-local} if $\chi(G[N_G(z)\cap C])\leq a$ for every $z\in C\cup D$.


For $a\in\mathbb{N}$ and $q\in\mathbb{N}\cup\{0\}$, let
\[\gamma_q(a)=\bigl(t-2+(2t-2)q\bigr)a.\]


\begin{lemma}\label{lem:rooted-pt}
	Let $a\in\mathbb{N}$ and $k\in\mathbb{N}\cup\{0\}$. Let $\Xi=(C,D)$ be an $a$-local rooted configuration in a graph $G$. If $\pi_{P_t}(G,D)\leq k$, then
	$\chi(G[C])\leq\gamma_k(a)$.
\end{lemma}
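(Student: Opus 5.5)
We argue by induction on $k$, showing that if $\chi(G[C])>\gamma_k(a)$ then $D$ roots an induced $P_t$-packing of size $k+1$, contradicting $\pi_{P_t}(G,D)\leq k$. The plan is to build one $D$-rooted induced $P_t$ using part of the chromatic budget. We then pass to a sub-configuration that is anticomplete to this path and still has chromatic number greater than $\gamma_{k-1}(a)$. The definition $\gamma_q(a)=(t-2+(2t-2)q)a$ suggests the budget: $(t-2)a$ is needed to find one path at all (the base case $k=0$), and each further path costs $(2t-2)a$. That cost splits as $(t-1)a$ for growing the path by \cref{lem:gyarfas-extension-local} and $(t-1)a$ for removing neighbourhoods of the path's vertices, so that the remainder is anticomplete to it.

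For the base case $k=0$, suppose $\chi(G[C])>(t-2)a$. We take a component $C_0$ of $G[C]$ of maximum chromatic number and any $d\in D$ with a neighbour in $C_0$, which exists since $D$ dominates $C$. We apply \cref{lem:gyarfas-extension-local} with $x_0=d$ and $q=t-2$. Its hypotheses hold because $\Xi$ is $a$-local, so $\chi(G[N(z)\cap C_0])\leq a$ for $z\in C_0\cup\{d\}$. This yields an induced path $d-x_1-\cdots-x_{t-2}$ together with a nonempty set $C'$ such that $x_{t-2}$ has a neighbour $y\in C'$ and $d,x_1,\dots,x_{t-3}$ are anticomplete to $C'$. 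Appending $y$ gives an induced $P_t$ containing $d\in D$, so $\pi_{P_t}(G,D)\geq1$. Hence $\pi_{P_t}(G,D)=0$ forces $\chi(G[C])\leq(t-2)a$.

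For the inductive step, suppose $\chi(G[C])>\gamma_k(a)$ with $k\geq1$, and find a $D$-rooted $P$ as above, with vertices $d,x_1,\dots,x_{t-1}$. Let $C^\ast=C\setminus(V(P)\cup N(V(P)))$. Each vertex of $P$ lies in $C\cup D$, so by locality its neighbourhood in $C$ has chromatic number at most $a$. Removing the $t-1$ vertices $x_i$ together with their neighbourhoods costs at most $(t-1)a$, which leaves $d$ to handle. Since $d\in D$ and other vertices of $D$ may be adjacent to $P$, we also remove the roots adjacent to $P$. The new configuration is $\Xi^\ast=(C^\ast_1,D^\ast)$, where $C^\ast_1$ consists of the vertices $v\in C^\ast$ whose first root $p_\Xi(v)$ is not adjacent to $P$ and is not $d$, and $D^\ast=D\setminus N[V(P)]$. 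Vertices of $C^\ast$ whose first root is $d$ are simply deleted, since $N(d)\cap C$ has chromatic number at most $a$. This ordering-based choice is why $p_\Xi$ was introduced.

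The main obstacle is bounding the vertices of $C^\ast$ whose first root is adjacent to $P$. Their number of possible roots is not bounded, so we cannot pay $a$ per root. My plan is to avoid this by choosing $d$ and the first step of the path carefully. We take $d=d_j$ with $j$ maximal among roots having a neighbour in the chosen component, or else we run the argument on the vertices with a fixed first root. Then the roots adjacent to $P$ are dominated through vertices of $P$, and their first-root classes can be charged to $N(x_i)\cap C$. If this charging works, the total loss stays within $(2t-2)a$, so $\chi(G[C^\ast_1])>\gamma_{k-1}(a)$. The configuration $\Xi^\ast$ is still $a$-local, since locality is hereditary. The induction then gives a $D^\ast$-rooted packing of size $k$ inside $C^\ast_1\cup D^\ast$, which is anticomplete to $P$, and adding $P$ yields a packing of size $k+1$, the desired contradiction.
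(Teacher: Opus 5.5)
There is a genuine gap, and it sits exactly at the step you call the main obstacle. You never bound $\chi$ of the vertices of $C^\ast$ whose first root is adjacent to $P$, and neither suggested fix works. Locality only controls $\chi(G[N(z)\cap C])$; it says nothing about how many roots a vertex of $C$ is adjacent to. With $d=d_j$ and $j$ maximal among roots adjacent to the chosen component, the vertex $x_1$ produced by the Gy\'arf\'as extension lemma is essentially an arbitrary neighbour of $d_j$. Nothing prevents $x_1$ from being adjacent to every root that has a neighbour in $C$. If $G[C]$ is connected, $D^\ast$ then contains no root with a neighbour in $C$, so $C^\ast_1=\emptyset$, although $\chi(G[C^\ast])\geq\chi(G[C])-ta>\gamma_{k-1}(a)$.

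The charging fails as well. If $d_i$ is adjacent to some $x_\ell$, its class $W_i=\{v\in C\mid p_\Xi(v)=d_i\}$ lies in $N(d_i)$, not in $N(x_\ell)$, so it cannot be charged to $N(x_\ell)\cap C$. Each such class may cost up to $a$, and there can be unboundedly many of them. Restricting to one first-root class is also pointless, since $\chi(G[W_i])\leq a$. Your budget split is not what actually happens either: $C^\ast$ only loses $N[V(P)]\cap C$, which costs at most $ta$. That leaves $(t-2)a$ of the increment $(2t-2)a$ unaccounted for.

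That leftover $(t-2)a$ is exactly the cost of the missing idea, which the paper uses: split by root index at a threshold chosen by chromatic number, not by adjacency. For $X\subseteq C$ put $X_{\geq j}=X\cap(W_j\cup\cdots\cup W_m)$, $X_{>j}=X_{\geq j+1}$, and $X_{\leq j}=X\setminus X_{>j}$. Choose $j$ \emph{maximum} with $\chi(G[X_{\geq j}])>(t-2)a$, so that $\chi(G[X_{>j}])\leq(t-2)a$. A component of $G[X_{\geq j}]$ with chromatic number above $(t-2)a$ must then meet $W_j$, so $d_j$ is the first root adjacent to it. Extending from $d_j$ gives a path $Q$ with $V(Q)\cap C\subseteq W_j\cup\cdots\cup W_m$.

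Recurse on $Y=X_{\leq j}\setminus N[V(Q)]$, which avoids $W_j\subseteq N(d_j)$. Every vertex of $Y$ has its first root among $d_1,\ldots,d_{j-1}$. Such a root is nonadjacent to $d_j$ because $D$ is independent, and nonadjacent to $V(Q)\cap C$ by the definition of first roots. So the roots of all later paths are automatically anticomplete to $Q$, at total cost $(t-2)a+ta=(2t-2)a$.

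Finally, carry the induction for arbitrary $X\subseteq C$, producing paths of the form $p(v_1)-v_1-\cdots-v_{t-1}$ with $v_i\in X$. Alternatively, at least apply it inside the induced subgraph $G[C^\ast_1\cup D^\ast]$. The lemma for $k-1$, as stated, only gives a $D^\ast$-rooted packing somewhere in $G$, not inside $C^\ast_1\cup D^\ast$, so anticompleteness to $P$ would not follow.
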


\begin{proof}
	Write $p=p_{\Xi}$. For $i\in\{1,\ldots,m\}$, let
	\[W_i=\{v\in C\mid p(v)=d_i\}.\]
	Note that $W_i\subseteq N_G(d_i)\cap C$, and $d_i$ is anticomplete
	to $W_j$ whenever $i<j$.
	
	We first observe that, if $X\subseteq C$ and $\chi(G[X])>(t-2)a$, then $G$ contains an induced path
	$d-v_1-\cdots-v_{t-1}$
	such that $d=p(v_1)\in D$ and $v_1,\ldots,v_{t-1}\in X$. Indeed, choose a component $H$ of $G[X]$ with $\chi(H)>(t-2)a$, and let $d$ be the first vertex of $D$ having a neighbor in $H$. Applying~\cref{lem:gyarfas-extension-local} with $C=V(H)$, $x_0=d$, and $q=t-2$ gives a nonempty remaining set, since its chromatic number is positive. Appending a neighbor in this set to the resulting path gives the required path. The choice of $d$ implies that $p(v_1)=d$.
	
	We prove by induction on $q$ that, for every $X\subseteq C$, if $\chi(G[X])>\gamma_q(a)$, then $G$ contains $q+1$ pairwise anticomplete induced copies of $P_t$, each of the form
	$p(v_1)-v_1-\cdots-v_{t-1}$
	with $v_1,\ldots,v_{t-1}\in X$.
	
	The case $q=0$ follows from the preceding observation. Assume the
	assertion holds for $q$, and let $X\subseteq C$ satisfy
	$\chi(G[X])>\gamma_{q+1}(a)$. For $j\in\{1,\ldots,m\}$, let
	\[X_{\geq j}=X\cap(W_j\cup\cdots\cup W_m),\qquad
	X_{>j}=X\cap(W_{j+1}\cup\cdots\cup W_m),\]
	where $X_{>m}=\emptyset$, and put $X_{\leq j}=X\setminus X_{>j}$.
	Choose $j$ maximum such that $\chi(G[X_{\geq j}])>(t-2)a$.
	Such a $j$ exists because $X_{\geq1}=X$. By maximality,
	$\chi(G[X_{>j}])\leq(t-2)a$.

	Choose a component $H$ of $G[X_{\geq j}]$ with
	$\chi(H)>(t-2)a$. Since $\chi(G[X_{>j}])\leq(t-2)a$, the component
	$H$ meets $W_j$, and $d_j$ is the first vertex of $D$ having a
	neighbor in $H$. The path construction in the preceding observation
	therefore gives an induced path $Q=d_j-v_1-\cdots-v_{t-1}$ with
	$p(v_1)=d_j$ and $v_1,\ldots,v_{t-1}\in X_{\geq j}$.

	Let $Y=X_{\leq j}\setminus N_G[V(Q)]$. Since $t\geq2$, every
	vertex of $Q$ has a neighbor in $Q$, and hence
	\[X_{\leq j}\setminus Y\subseteq
	\bigcup_{z\in V(Q)}(N_G(z)\cap C).\]
	Locality gives $\chi(G[X_{\leq j}\setminus Y])\leq ta$, so
	\[\chi(G[Y])\geq\chi(G[X])-(t-2)a-ta
	>\gamma_{q+1}(a)-(2t-2)a=\gamma_q(a).\]
	By induction, there are $q+1$ pairwise anticomplete induced copies of
	$P_t$ of the required form whose vertices in $C$ belong to $Y$.
	These vertices are anticomplete to $V(Q)$. Moreover,
	$W_j\subseteq N_G(d_j)$ implies $Y\cap W_j=\emptyset$, so every
	old root precedes $d_j$. Such a root is anticomplete to
	$V(Q)\cap C\subseteq W_j\cup\cdots\cup W_m$, and is nonadjacent
	to $d_j$ because $D$ is independent.
	Thus, the old paths together with $Q$ form $q+2$ pairwise anticomplete induced copies of $P_t$ of the required form. Taking $q=k$ and $X=C$, the inequality $\chi(G[C])>\gamma_k(a)$ would give $k+1$ pairwise anticomplete induced copies of $P_t$, each containing a vertex of $D$, contrary to $\pi_{P_t}(G,D)\leq k$. This proves the lemma.
\end{proof}

We will use the following consequence of~\cref{lem:rooted-pt}.

\begin{lemma}\label{lem:pt-dominated-set}
	Let $a,r\in\mathbb{N}$ and $k\in\mathbb{N}\cup\{0\}$, and let $G$ be a
	graph such that $\omega(G)\leq r$ and $\chi(G[N_G(v)])\leq a$ for
	every $v\in V(G)$. Let $C,D\subseteq V(G)$ be such that
	$D\cap C=\emptyset$, the set $D$ dominates $C$, and
	$\pi_{P_t}(G,D)\leq k$. Then
	$\chi(G[C])\leq\bigl(g^{\mathsf{path}}_t(r)+tka\bigr)\gamma_k(a)$.
\end{lemma}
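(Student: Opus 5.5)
The plan is to reduce to \cref{lem:rooted-pt}, whose only missing hypothesis here is that the dominating set be independent. So we first properly colour $G[D]$ with a bounded number of colours, and then split $C$ according to which colour class dominates each vertex. The target is $\chi(G[D])\leq g^{\mathsf{path}}_t(r)+tka$. The final bound is then this number times $\gamma_k(a)$, one factor $\gamma_k(a)$ per colour class.

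\emph{Step 1: colouring $D$.} We run a greedy packing argument inside $D$. Set $D^{(0)}=D$. As long as $G[D^{(i)}]$ contains an induced $P_t$, say $Q_{i+1}$, put $D^{(i+1)}=D^{(i)}\setminus N_G[V(Q_{i+1})]$.
\begin{itemize}
\item Each $Q_j$ lies in $D$, so it is $D$-rooted.
\item $Q_{j'}$ with $j'>j$ lies in $D^{(j)}$, which avoids $N_G[V(Q_j)]$; hence the paths are pairwise anticomplete.
\item Consequently $\pi_{P_t}(G,D)\leq k$ forces the process to stop after at most $k$ rounds.
\end{itemize}
The final set $D^{(i)}$ with $i\leq k$ induces a $P_t$-free graph of clique number at most $r$. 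By Gyárfás's theorem, $\chi(G[D^{(i)}])\leq g^{\mathsf{path}}_t(r)$.

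Each removed set $D^{(j)}\cap N_G[V(Q_{j+1})]$ has chromatic number at most $ta$. Indeed, since $t\geq2$, every vertex of $Q_{j+1}$ lies in the open neighbourhood of another vertex of $Q_{j+1}$. So this set is covered by the $t$ sets $N_G(z)$, $z\in V(Q_{j+1})$, each of chromatic number at most $a$. Using disjoint palettes for all pieces gives
\[
\chi(G[D])\leq g^{\mathsf{path}}_t(r)+tka=:c .
\]
Fix a partition of $D$ into independent sets $D_1,\ldots,D_c$, some possibly empty.

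\emph{Step 2: splitting $C$.} For $i\in[c]$, let $C_i$ be the set of vertices $v\in C$ that have a neighbour in $D_i$ but no neighbour in $D_1\cup\cdots\cup D_{i-1}$. Since $D$ dominates $C$ and $C\cap D=\emptyset$, the sets $C_i$ partition $C$.

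Order $D_i$ arbitrarily and consider $\Xi_i=(C_i,D_i)$. It is a rooted configuration: $D_i$ is independent, disjoint from $C_i$, and dominates $C_i$. It is $a$-local because $\chi(G[N_G(z)\cap C_i])\leq\chi(G[N_G(z)])\leq a$ for every vertex $z$. Moreover $\pi_{P_t}(G,D_i)\leq\pi_{P_t}(G,D)\leq k$, since $D_i\subseteq D$. So \cref{lem:rooted-pt} gives $\chi(G[C_i])\leq\gamma_k(a)$. Colouring the $C_i$ with disjoint palettes yields $\chi(G[C])\leq c\,\gamma_k(a)$, which is the claimed bound.

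\emph{Main obstacle.} The only real point is Step 1, bounding $\chi(G[D])$. Note that the packing hypothesis is used twice: once to make $D$ nearly $P_t$-free via the greedy anticomplete packing, and once inside \cref{lem:rooted-pt}. In Step 1 one must check that the removed closed neighbourhoods of the paths are controlled by the local bound $a$. This is exactly where $t\geq2$ is needed, since it ensures $N_G[V(Q)]=\bigcup_{z\in V(Q)}N_G(z)$.
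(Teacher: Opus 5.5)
Your proof is correct and is essentially the paper's argument. Your greedy removal process is just an explicit construction of the paper's maximal family of pairwise anticomplete induced copies of $P_t$ in $G[D]$. After that, both arguments colour $G[D]$ with at most $g^{\mathsf{path}}_t(r)+tka$ colours, partition $C$ according to a colour class that dominates each vertex, and apply \cref{lem:rooted-pt} to each resulting $a$-local rooted configuration.
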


\begin{proof}
	If $C=\emptyset$, the assertion is immediate. Thus, assume that
	$C\neq\emptyset$, and hence $D\neq\emptyset$. Let $\mathcal{Q}$ be
	a maximal collection of pairwise anticomplete induced copies of $P_t$
	in $G[D]$, and let $U$ be the union of their vertex sets. The members
	of $\mathcal{Q}$ form a $D$-rooted induced $P_t$-packing in $G$, so
	$|\mathcal{Q}|\leq k$ and $|U|\leq tk$. By maximality, the graph
	$G[D\setminus N_G[U]]$ is $P_t$-free, and hence
	$\chi(G[D\setminus N_G[U]])\leq g^{\mathsf{path}}_t(r)$ by
	Gy\'arf\'as's theorem. Since $t\geq2$, every vertex of $U$ has a
	neighbor in $U$, and hence $N_G[U]=\bigcup_{u\in U}N_G(u)$.
	Each set $N_G(u)\cap D$ is $a$-colorable. Using disjoint palettes,
	we obtain
	\[\chi(G[D])\leq g^{\mathsf{path}}_t(r)+a|U|\leq
	g^{\mathsf{path}}_t(r)+tka.\]

	Let $E_1,\ldots,E_q$ be the nonempty color classes of such a coloring
	of $G[D]$, where $q\leq g^{\mathsf{path}}_t(r)+tka$. Assign every
	vertex of $C$ to a color class containing one of its neighbors,
	thereby obtaining a partition
	$C=C_1\mathbin{\dot\cup}\cdots\mathbin{\dot\cup}C_q$ such that $E_j$
	dominates $C_j$ for every $j\in\{1,\ldots,q\}$.

	Fix $j$ with $C_j\neq\emptyset$, order $E_j$ arbitrarily, and let
	$D_j$ be the resulting sequence. Then $\Xi_j=(C_j,D_j)$ is an
	$a$-local rooted configuration in $G$: the set $E_j$ is independent, is
	disjoint from $C_j$, and dominates $C_j$, while locality follows from
	the $a$-colorability of every neighborhood in $G$. Moreover,
	$E_j\subseteq D$, and so $\pi_{P_t}(G,E_j)\leq k$. Therefore,
	\cref{lem:rooted-pt} gives $\chi(G[C_j])\leq\gamma_k(a)$. Using
	disjoint palettes for $C_1,\ldots,C_q$ proves the lemma.
\end{proof}

\begin{lemma}\label{lem:pt-tail}
	Let $a,b\in\mathbb{N}$ with $b\geq a$, and let $G$ be a connected
	graph such that $\chi(G[N_G(v)])\leq a$ for every $v\in V(G)$. If
	$\chi(G)>(t-1)a+1+2b$, then $G$ has a connected induced subgraph
	$H$ and a vertex $v_0\in V(H)$ such that the distance layering
	$L_0,\ldots,L_\ell$ of $H$ from $v_0$ satisfies the following:
	\begin{itemize}
		\item $|L_j|=1$ for $0\leq j\leq t-1$, and
		$H[L_0\cup\cdots\cup L_{t-1}]$ is an induced copy of $P_t$;
		\item $\chi(H[L_i])>b$ for some $i\geq t+1$.
	\end{itemize}
\end{lemma}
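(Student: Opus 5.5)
The plan is to grow an induced path of length $t-1$ with \cref{lem:gyarfas-extension-local}, take $H$ to be this path together with the high-chromatic connected set that the lemma leaves behind, and then locate one high-chromatic layer by a parity argument.

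\textbf{Setting up the path.} Fix an arbitrary vertex $x_0\in V(G)$. Since deleting one vertex lowers the chromatic number by at most one, $\chi(G-x_0)>(t-1)a+2b$. Let $C$ be the vertex set of a component of $G-x_0$ of maximum chromatic number, so that $G[C]$ is connected and $\chi(G[C])>(t-1)a+2b$. Because $G$ is connected, $x_0$ has a neighbor in $C$. The locality hypothesis $\chi(G[N_G(z)\cap C])\leq a$ holds for every $z$, since every neighborhood in $G$ is $a$-colorable.

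We apply \cref{lem:gyarfas-extension-local} with $q=t-1$, which is allowed because $\chi(G[C])>(t-1)a$. It yields an induced path $x_0-x_1-\cdots-x_{t-1}$ with $x_1,\ldots,x_{t-1}\in C$, and a set $C'\subseteq C\setminus\{x_1,\ldots,x_{t-1}\}$ with the following properties:
\begin{itemize}
    \item $G[C']$ is connected;
    \item $x_{t-1}$ has a neighbor in $C'$;
    \item $x_0,\ldots,x_{t-2}$ are anticomplete to $C'$;
    \item $\chi(G[C'])\geq\chi(G[C])-(t-1)a>2b$.
\end{itemize}
Set $H=G[\{x_0,\ldots,x_{t-1}\}\cup C']$ and $v_0=x_0$. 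Then $H$ is connected.

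\textbf{The layering.} In $H$, the only path vertex with neighbors in $C'$ is $x_{t-1}$. Hence every path from $x_0$ into $C'$ passes through $x_{t-1}$, and so $d_H(x_0,x_j)=j$ for $j\leq t-1$ while every vertex of $C'$ has distance at least $t$. Therefore:
\begin{itemize}
    \item $L_j=\{x_j\}$ for $0\leq j\leq t-1$, and these layers induce the path, which is an induced copy of $P_t$;
    \item $L_t=N_G(x_{t-1})\cap C'$, so $\chi(H[L_t])\leq a\leq b$;
    \item $C'=L_t\cup L_{t+1}\cup\cdots\cup L_\ell$.
\end{itemize}

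\textbf{Finding a high-chromatic layer.} This is the step needing care, since the naive bound (discarding $L_t$ and halving) only gives a layer with chromatic number greater than $b-a/2$. Instead, split $C'$ into the union $O$ of its odd-indexed layers and the union $E$ of its even-indexed layers. Layers whose indices differ by at least $2$ are anticomplete, so $\chi(H[O])$ equals the maximum of $\chi(H[L_i])$ over odd $i\geq t$, and similarly for $E$. From $2b<\chi(H[C'])\leq\chi(H[O])+\chi(H[E])$, some layer $L_i$ with $i\geq t$ satisfies $\chi(H[L_i])>b$. Since $\chi(H[L_t])\leq b$, we get $i\neq t$, so $i\geq t+1$, as required.

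The only genuine bookkeeping is in the constants. The $+1$ in the hypothesis pays for removing $x_0$, the term $(t-1)a$ pays for the path extension, and $2b$ (together with $b\geq a$) handles the parity split and excludes $L_t$.
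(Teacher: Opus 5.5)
Your proof is correct and follows essentially the same route as the paper: delete a vertex, take a component of maximum chromatic number, apply \cref{lem:gyarfas-extension-local} with $q=t-1$, and obtain a layer of chromatic number greater than $b$ from the parity bound $\chi(H[C'])\leq 2\max_{i\geq t}\chi(H[L_i])$. As in the paper, the case $i=t$ is then excluded because $L_t\subseteq N_H(x_{t-1})$ forces $\chi(H[L_t])\leq a\leq b$.
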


\begin{proof}
	Choose $v_0\in V(G)$, and let $C$ be the vertex set of a component
	of $G-v_0$ with maximum chromatic number. Then $v_0$ has a neighbor
	in $C$, and
	\[\chi(G[C])=\chi(G-v_0)\geq\chi(G)-1>(t-1)a+2b.\]
	Apply~\cref{lem:gyarfas-extension-local} with this set $C$, with
	$x_0=v_0$, and with $q=t-1$. We obtain an induced path
	$v_0-x_1-\cdots-x_{t-1}$ and a connected set $C'$ such that
	$x_{t-1}$ has a neighbor in $C'$, all earlier vertices of the path
	are anticomplete to $C'$, and
	\[\chi(G[C'])\geq\chi(G[C])-(t-1)a>2b.\]

	Let
	\[H=G[\{v_0,x_1,\ldots,x_{t-1}\}\cup C'],\]
	and let $L_0,\ldots,L_\ell$ be its distance layering from $v_0$.
	Since $x_{t-1}$ is the only vertex of the path with a neighbor in
	$C'$, we have $L_0=\{v_0\}$,
	$L_j=\{x_j\}$ for $1\leq j\leq t-1$, and
	$C'=L_t\cup\cdots\cup L_\ell$. Moreover,
	$L_t\subseteq N_H(x_{t-1})$, so $\chi(H[L_t])\leq a\leq b$.
	Layers of equal parity are pairwise anticomplete, and hence
	\[\chi(G[C'])\leq2\max_{i\geq t}\chi(H[L_i]).\]
	Since $\chi(G[C'])>2b$, some layer $L_i$ with $i\geq t+1$ has
	chromatic number greater than $b$.
\end{proof}

We now have everything needed to prove the following.

\begin{theorem}\label{thm:chi-bounded-paths}
	For every $t\in\mathbb{N}$, there is a function
	$f^{\mathsf{path}}_t:(\mathbb{N}\cup\{0\})\times\mathbb{N}\to\mathbb{N}$ such that,
	for all $k\in\mathbb{N}\cup\{0\}$ and $r\in\mathbb{N}$, every graph
	$G$ with $\treepi_{P_t}(G)\leq k$ and $\omega(G)\leq r$ satisfies
	$\chi(G)\leq f^{\mathsf{path}}_t(k,r)$.
\end{theorem}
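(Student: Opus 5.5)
We argue by induction on $r=\omega(G)$, with $k$ and $t$ fixed; the case $t=1$ was handled above, so assume $t\geq2$. For $r=1$ the graph is edgeless and we set $f^{\mathsf{path}}_t(k,1)=1$. For $r\geq2$, set $a=f^{\mathsf{path}}_t(k,r-1)$. The neighborhood of every vertex induces a graph of clique number at most $r-1$, and its induced packing treewidth is at most $k$ by \cref{lem:structural-package}\eqref{sprop:3}. By induction, $\chi(G[N_G(v)])\leq a$ for every $v\in V(G)$, so the locality hypothesis of all the preceding lemmas holds. Put
\[
b=\bigl(g^{\mathsf{path}}_t(r)+tka\bigr)\gamma_k(a)
\]
(which is at least $a$), and aim for
\[
f^{\mathsf{path}}_t(k,r)=(t-1)a+1+2b.
\]
It suffices to treat connected $G$.

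Suppose for contradiction that $\chi(G)>(t-1)a+1+2b$. \Cref{lem:pt-tail} gives a connected induced subgraph $H$, a root $v_0$, and a distance layering $L_0,\ldots,L_\ell$ with $L_0\cup\cdots\cup L_{t-1}$ an induced $P_t$, and some layer $L_i$, $i\geq t+1$, with $\chi(H[L_i])>b$. The natural move is to apply \cref{lem:pt-dominated-set} to $C=L_i$ and $D=L_{i-1}$. Here $D$ dominates $C$, the two sets are disjoint, and the hypotheses on $\omega$ and on neighbourhood chromatic number are inherited by $H$. That lemma would give $\chi(H[L_i])\leq b$, a contradiction, provided that $\pi_{P_t}(H,L_{i-1})\leq k$.

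The main obstacle is that $\treepi_{P_t}(H)\leq k$ bounds the $B$-rooted packing number only for bags of a decomposition, not for an arbitrary layer. We therefore have to find a bag that meets the situation suitably. Take a tree decomposition $(T,\beta)$ of $H$ witnessing $\treepi_{P_t}(H)\leq k$. The idea is to use the connected set $L_{\leq i-2}$ (which contains the long initial path) and the high-chromatic set $L_i$ together with $L_{i-1}$. Consider the subtree $T_0$ of nodes whose bags meet the connected set $L_{\leq i-2}$; on the other side we have the components of $H[L_{\geq i}]$.

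I would first try the following replacement. Among components of $H[L_{\geq i-1}\setminus\ldots]$, pick a component $K$ of $H[L_i\cup\cdots]$ with large chromatic number inside $L_i$. Then choose a bag $\beta(x)$ in $T_0$ closest to the subtree of $K$. Every vertex of $L_{i-1}$ adjacent to $K$ lies on a path from $L_{\leq i-2}$ to $K$, so it appears in $\beta(x)$ or is separated. Doing this properly may require replacing $D=L_{i-1}$ by $D'=\beta(x)\cap L_{\leq i-1}$ and $C$ by the part of $L_i$ (or of a deeper region) dominated by $D'$. The remaining part of $C$ must then have small chromatic number, via a separation argument showing it lies in a piece that does not see the root path. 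Then $\pi_{P_t}(H,D')\leq k$ holds because $D'$ is contained in a bag, and \cref{lem:pt-dominated-set} applies.

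The step where I expect difficulty is making this bag selection rigorous. We need to show that $\chi$ does not drop too much when restricting to the portion controlled by a single bag, possibly by iterating over the tree and using the induced $P_t$ on $L_0,\ldots,L_{t-1}$. That path supplies extra anticomplete $P_t$'s only when $i-1\geq t+1$, so the argument should exploit the condition $i\geq t+1$. If the single-bag argument fails, the fallback is a second induction, on $k$. We would find a bag meeting a copy of $P_t$ far from a high-chromatic region, remove its neighbourhood, and argue that the remaining high-chromatic part has $\pi$ reduced by one relative to that bag. This would adjust $f^{\mathsf{path}}_t$ by iterating the constant $(t-1)a+1+2b$ over $k$ levels.
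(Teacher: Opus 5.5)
\paragraph{Verdict.} There is a genuine gap. You correctly reduce to \cref{lem:pt-tail} and correctly identify the obstacle: $\pi_{P_t}(H,L_{i-1})$ is not controlled, because $L_{i-1}$ need not lie in one bag. But the proposal never overcomes this obstacle, and the proposed $f^{\mathsf{path}}_t(k,r)=(t-1)a+1+2b$ with $b=(g^{\mathsf{path}}_t(r)+tka)\gamma_k(a)$, obtained by induction on $r$ alone, cannot be justified along the lines you sketch.

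\paragraph{Why the single-bag plan fails.} A vertex of $L_i$ may share bags with vertices of $L_{\le i-2}$, since bags can contain nonadjacent vertices. For such vertices no ``closest bag'' separates them from the root part. Their neighbours in $L_{i-1}$ can also be spread over many different bags of $T_0$. So no single bag dominates them, and no separation argument bounds their chromatic number. Your sentence that ``the remaining part \dots\ lies in a piece that does not see the root path'' describes the wrong set: the troublesome part is exactly the part that \emph{does} share bags with the root region. Your fallback, deleting the neighbourhood of one copy of $P_t$ near one bag, also does not help. It lowers the packing number only relative to that single bag, whereas an induction on $k$ needs every bag of a decomposition to lose one.

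\paragraph{The missing idea.} Split $L_i$ according to the decomposition. Put $L'=L_0\cup\cdots\cup L_{i-2}$ and $T'=\bigcup_{v\in L'}T_v$; this is a subtree because $H[L']$ is connected. Let $A_i$ be the set of vertices of $L_i$ whose occurrence subtree meets $T'$, and let $B_i=L_i\setminus A_i$.

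\textbf{The part $A_i$.} The decomposition $(T',\,x\mapsto\beta(x)\cap A_i)$ is a tree decomposition of $H[A_i]$; edges are covered by the Helly property for $T_u,T_v,T'$. Every original bag $\beta(x)$ with $x\in V(T')$ contains a vertex of $L'$. Since $i\ge t+1$, that vertex lies on an induced $P_t$ inside $H[L']$, and this copy is anticomplete to $L_i$. Hence every bag gets one packing member for free, so $\treepi_{P_t}(H[A_i])\le k-1$. Induction on $k$ then gives $\chi(H[A_i])\le f^{\mathsf{path}}_t(k-1,r)$.

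\textbf{The part $B_i$.} Take a component $C$ of $H[B_i]$ with maximum chromatic number. Then $T_C$ is disjoint from $T'$. Every $d\in D=N_H(C)\cap L_{i-1}$ has $T_d$ meeting $T_C$, and also meeting $T'$ through its neighbour in $L_{i-2}$. So $T_d$ contains the endpoint $x\in V(T')$ of the shortest $T'$--$T_C$ path. Thus $D\subseteq\beta(x)$, and \cref{lem:pt-dominated-set} applies to $(C,D)$.

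\paragraph{Consequence for your recursion.} You therefore need a genuine double induction on $k+r$, with $b$ replaced by $M=f^{\mathsf{path}}_t(k-1,r)+\bigl(g^{\mathsf{path}}_t(r)+tka\bigr)\gamma_k(a)$. This yields $\chi(H[L_i])\le M$, which contradicts the choice of $L_i$.
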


\begin{proof}
If $t=1$, set
\[f^{\mathsf{path}}_1(k,r)=\Ram(k+1,r+1)-1.\]
The argument at the beginning of the subsection proves the assertion.
Thus, assume that $t\geq2$. Set
\[f^{\mathsf{path}}_t(0,r)=g^{\mathsf{path}}_t(r), \qquad
f^{\mathsf{path}}_t(k,1)=1.\]
These definitions agree at $(0,1)$, since
$g^{\mathsf{path}}_t(1)=1$. For $k\geq1$ and $r\geq2$, take
	\[a=f^{\mathsf{path}}_t(k,r-1),\qquad s=g^{\mathsf{path}}_t(r)+tka,\]
	\[M=f^{\mathsf{path}}_t(k-1,r)+s\gamma_k(a),\]
	and
	\[f^{\mathsf{path}}_t(k,r)=(t-1)a+1+2M.\]
	This recursion is well defined by induction on $k+r$.
	
	We prove the theorem by induction on $k+r$. The case $r=1$ is immediate. If $k=0$, then $G$ is $P_t$-free: an induced copy of $P_t$ would meet some bag of every tree decomposition and would therefore give a rooted packing of size one. Hence, by Gy\'arf\'as's theorem,
	$\chi(G)\leq g^{\mathsf{path}}_t(r)=f^{\mathsf{path}}_t(0,r)$.
	
	Let $k\geq1$ and $r\geq2$, and let $G$ satisfy $\treepi_{P_t}(G)\leq k$ and $\omega(G)\leq r$. Since the chromatic number of a graph is the maximum of the chromatic numbers of its components, and each component is an induced subgraph satisfying the same hypotheses, it suffices to prove the theorem when $G$ is connected.

    Take
	\[a=f^{\mathsf{path}}_t(k,r-1).\]
	For every $v\in V(G)$, the graph $G[N_G(v)]$ has clique number at most $r-1$ and induced-$P_t$-packing treewidth at most $k$. By induction,
	$\chi(G[N_G(v)])\leq a$.
	
	Suppose for a contradiction that $\chi(G)>f^{\mathsf{path}}_t(k,r)$.
	Since $s\geq1$ and $\gamma_k(a)\geq a$, we have $M\geq a$. Applying~\cref{lem:pt-tail} with
	$b=M$ gives a connected induced subgraph $H$ of $G$, a vertex $v_0\in V(H)$,
	and an index $i\geq t+1$ such that the distance layering
	$L_0,\ldots,L_\ell$ of $H$ from $v_0$ satisfies
	$\chi(H[L_i])>M$. Moreover, $|L_j|=1$ for $0\leq j\leq t-1$, and
	$H[L_0\cup\cdots\cup L_{t-1}]$ is an induced copy of $P_t$.
	Every neighborhood in $H$ is $a$-colorable.
	
	Fix a tree decomposition $\mathcal{T}=(T,\beta)$ of $H$ with
	$\pi_{P_t}(H,\mathcal{T})\leq k$.
	For $v\in V(H)$, let $T_v$ be its occurrence subtree. Let
	\[L'=L_0\cup\cdots\cup L_{i-2},\qquad T'=\bigcup_{v\in L'}T_v.\]
	Since $H[L']$ is connected, $T'$ is a subtree of $T$.
	Every vertex $z\in L'$ belongs to an induced copy of $P_t$ in
	$H[L']$: if $z\in L_j$ with $j<t-1$, use
	$H[L_0\cup\cdots\cup L_{t-1}]$, which lies in $H[L']$ because
	$i\geq t+1$; otherwise, use the first $t$ vertices of a shortest
	path from $z$ to $v_0$. Let
	\[A_i=\{v\in L_i\mid T_v\cap T'\neq\emptyset\},\qquad B_i=L_i\setminus A_i.\]
	
	\begin{claim}
		$\treepi_{P_t}(H[A_i])\leq k-1$.
	\end{claim}

	\begin{claimproof}
	For $x\in V(T')$, let $\beta_i(x)=\beta(x)\cap A_i$. Then $\mathcal{T}_i=(T',\beta_i)$ is a tree decomposition of $H[A_i]$. Indeed, the occurrence set of every $v\in A_i$ is the nonempty subtree $T_v\cap T'$. If $uv\in E(H[A_i])$, then $T_u,T_v,T'$ pairwise intersect, and hence have a common node by the Helly property for subtrees of a tree.

	Fix $x\in V(T')$, and choose $z\in\beta(x)\cap L'$, which exists by the definition of $T'$. By the preceding observation, there is an induced copy $Q_x$ of $P_t$ in $H[L']$ containing $z$, and hence meeting $\beta(x)$. Since there are no edges between $L'$ and $L_i$, the graph $Q_x$ is anticomplete to $A_i$. Consequently, a $\beta_i(x)$-rooted induced $P_t$-packing of size $k$ in $H[A_i]$, together with $Q_x$, would give a $\beta(x)$-rooted induced $P_t$-packing of size $k+1$ in $H$. Therefore, $\pi_{P_t}(H[A_i],\mathcal{T}_i)\leq k-1$, proving the claim.
	\end{claimproof}

	By induction, $\chi(H[A_i])\leq f^{\mathsf{path}}_t(k-1,r)\leq M$.
	Since $\chi(H[L_i])>M$, the set $B_i$ is nonempty.
	
	Let $C$ be a component of $H[B_i]$ with maximum chromatic number, and put
	$D=N_H(C)\cap L_{i-1}$.
	Every vertex of $C$ has a neighbor in $L_{i-1}$, so $D$ dominates $C$.
	
	Let
	\[T_C=\bigcup_{v\in C}T_v.\]
	Since $C$ is connected, $T_C$ is a subtree of $T$, and
	$T_C\cap T'=\emptyset$ because $C\subseteq B_i$. Let $P_T$ be the
	shortest path in $T$ joining $T'$ and $T_C$, and let $x$ be its
	endpoint in $T'$. For every $d\in D$, the subtree $T_d$ meets $T_C$,
	because $d$ has a neighbor in $C$, and it meets $T'$, because $d$ has
	a neighbor in $L_{i-2}$. Hence $T_d$ contains $P_T$, and in
	particular $x\in T_d$. Therefore,
	$D\subseteq\beta(x)$.
	
	Apply~\cref{lem:pt-dominated-set} in $H$. Its hypotheses hold because $\omega(H)\leq r$, every neighborhood in $H$ is $a$-colorable, the set $D$ is disjoint from $C$ and dominates $C$, and $\pi_{P_t}(H,D)\leq\pi_{P_t}(H,\beta(x))\leq k$, as $D\subseteq\beta(x)$. By the choice of $C$, we obtain
	$\chi(H[B_i])=\chi(H[C])\leq s\gamma_k(a)$.
	
	Consequently,
	$\chi(H[L_i])\leq f^{\mathsf{path}}_t(k-1,r)+s\gamma_k(a)=M$,
	contrary to the choice of $L_i$. This completes the induction, and hence finishes the proof.
\end{proof}


\subsection{Induced star-packing treewidth}
Throughout this subsection, for $d\in\mathbb{N}$, we write
$S_d=K_{1,d}$. We prove the second assertion of
\cref{thm:chi-bounded}. Since the cases
$d\in\{1,2\}$ follow from~\cref{thm:chi-bounded-paths}, it
remains to prove the following.

\begin{theorem}\label{thm:star-treepi-chi-bounded}
For every integer $d\geq3$, there is a function
$f_d^{\mathrm{star}}:(\mathbb{N}\cup\{0\})\times\mathbb{N}
\longrightarrow\mathbb{N}$ such that, for all
$k\in\mathbb{N}\cup\{0\}$ and $r\in\mathbb{N}$, every graph $G$
with $\treepi_{S_d}(G)\leq k$ and $\omega(G)\leq r$ satisfies
$\chi(G)\leq f_d^{\mathrm{star}}(k,r)$.
\end{theorem}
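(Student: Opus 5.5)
We mirror the proof of \cref{thm:chi-bounded-paths}, by induction on $k+r$. The base cases are handled as follows. If $r=1$, the graph is edgeless. If $k=0$, then $G$ is $S_d$-free, and a $K_{1,d}$-free graph with $\omega(G)\leq r$ satisfies $\chi(G)<\Ram(d,r)$: every neighborhood has independence number less than $d$ and clique number less than $r$, so $\Delta(G)<\Ram(d,r)$. For the induction step we set $a=f_d^{\mathrm{star}}(k,r-1)$, so that, by induction applied to the graphs $G[N_G(v)]$, every neighborhood is $a$-colorable. We assume $G$ is connected and $\chi(G)$ is huge, and fix a tree decomposition $(T,\beta)$ witnessing $\treepi_{S_d}(G)\leq k$.

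\textbf{Layering step.} We choose a vertex $v_0$ that is the center of an induced $S_d$, and take the distance layering from the closed neighborhood $R=N[S]$ of such a star $S$. More simply, we take the layering from $v_0$ inside a connected subgraph $H$ in which some layer $L_i$ with $i\geq 3$ has chromatic number larger than $M$, produced exactly as in \cref{lem:pt-tail}, using parity of layers. We need every vertex $z$ of $L'=L_0\cup\dots\cup L_{i-2}$ to lie in an induced copy of $S_d$ inside $H[L']$. This is not automatic for stars, so we instead pass to
\[
L'':=\{z\in L' \mid z \text{ lies in an induced } S_d \text{ of } H[L']\}.
\]
Following the Claim in \cref{thm:chi-bounded-paths}, vertices of $L_i$ whose subtrees meet $T'=\bigcup_{v\in L''}T_v$ induce a graph of induced $S_d$-packing treewidth at most $k-1$. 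This requires $H[L'']$, or at least the relevant subtree union, to be connected. To arrange this, we use that $\chi$ is huge: the vertices of $L'$ not in any induced star have small degree-type structure, namely $\alpha(N(z)\cap L')<d$, so their neighborhoods within $L'$ have bounded size by Ramsey. Then $H[L']$ minus such vertices still has a large connected piece. Alternatively, we simply take $T'$ as the union over all of $L'$ and note that each bag meeting $L'$ meets an induced $S_d$ contained in $L_{\le i-1}$. Edges from $L_{i-1}$ to $L_i$ spoil anticompleteness, so we shift the indices by one: $L'=L_{\le i-3}$, and use stars inside $L_{\le i-2}$.

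\textbf{Dominated part.} For $B_i$, as before, the set $D=N(C)\cap L_{i-1}$ lies in a single bag, so $\pi_{S_d}(H,D)\leq k$. We need an analogue of \cref{lem:pt-dominated-set}: if $D$ dominates $C$, neighborhoods are $a$-colorable, and $\pi_{S_d}(G,D)\le k$, then $\chi(C)$ is bounded. First, $G[D]$ is colored as in \cref{lem:pt-dominated-set}, with a maximal packing of at most $k$ stars in $D$, whose neighborhoods cost at most $(d+1)ka$ colors, and the rest is $S_d$-free, hence boundedly colorable. This reduces to an independent dominating sequence $D$. Then we prove the analogue of \cref{lem:rooted-pt}. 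If some $d_j$ has large-$\chi$ private set $W_j$, then, since $\omega$ is bounded and $\chi(W_j)$ is large, $W_j$ contains an independent set of size $d$ that is far from the remaining mass. Indeed, it suffices to pick $d$ pairwise nonadjacent vertices $v_1,\dots,v_d\in W_j$, which exist since $\chi(W_j)>r$ forces $\alpha(W_j)\ge d$ when $|W_j|\ge\Ram(d,r+1)$. This gives an induced star $d_j;v_1,\dots,v_d$ rooted at $d_j$. We delete $N[\{d_j,v_1,\dots,v_d\}]\cap C$, costing at most $(d+1)a$ colors, and iterate on later indices. Earlier roots are non-adjacent to $d_j$ by independence of $D$ and to the private vertices by the definition of $p$. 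The star uses only $d+1$ vertices, so the induction of \cref{lem:rooted-pt} goes through with $\gamma'_q(a)=(q+1)(d+2)a$ plus a Ramsey constant.

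The main obstacle is the layering step: ensuring that every bag of $T'$ meets an induced $S_d$ anticomplete to $L_i$. Unlike paths, stars need not exist through every vertex. The first approach to try is to define $T'$ only from vertices that are centers or leaves of induced stars in $L_{\le i-2}$ and to argue connectivity via a large connected $S_d$-rich core.
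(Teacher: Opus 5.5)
Your skeleton matches the paper's: induction on $k+r$, $a$-colorable neighborhoods, the split of $L_i$ into $A_i$ and $B_i$, and the coloring of $D$ via a maximal packing of stars. However, there are two genuine gaps.

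\textbf{The layering step.} This is the obstacle you flag yourself: nothing guarantees that every bag of $T'$ meets an induced $S_d$ anticomplete to $L_i$. Both workarounds you suggest fail.
Restricting to $L''$ gives neither connectivity of $\bigcup_{v\in L''}T_v$ nor neighbors of $D=N(C)\cap L_{i-1}$ inside $L''$, and the argument needs both.
Shifting to $L'=L_{\le i-3}$ destroys $D\subseteq\beta(x)$: a vertex of $D$ need not have a neighbor in $L_{\le i-3}$, so its subtree need not meet $T'$.

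The missing idea is to pass first to a vertex-critical induced subgraph $H$, that is, one minimal with $\chi(H)=\chi(G)$. Then $\deg_H(v)\ge\chi(H)-1$. Since neighborhoods are $a$-colorable, every vertex has $2d-2$ pairwise nonadjacent neighbors once $\chi(H)-1>(2d-3)a$. Now layer from the vertex set of an induced star $Q$.
For $z\in L_j$ with $1\le j\le i-2$, do not look at $N(z)\cap L'$, since a vertex of $L_{i-2}$ may have few neighbors there. Instead pick $u\in N(z)\cap L_{j-1}$ and an independent $Y\subseteq N(u)$ with $|Y|=2d-2$; this $Y$ automatically lies in $L_{\le j}$.
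If $z\in Y$, or $z$ has at least $d-1$ non-neighbors in $Y$, you get a star centered at $u$ through $z$. Otherwise $z$ has at least $d$ neighbors in $Y$ and is itself the center of a star.
So every vertex of $L_{\le i-2}$ lies in an induced $S_d$ of $H[L_{\le i-2}]$. The path-case argument (the claim on $A_i$ and $D\subseteq\beta(x)$) then runs verbatim with $L'=L_{\le i-2}$.

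\textbf{The analogue of \cref{lem:rooted-pt}.} You only look for stars centered at a root $d_j$ with leaves in $W_j$. But $W_j\subseteq N(d_j)\cap C$ forces $\chi(W_j)\le a$, so ``$\chi(W_j)$ large'' never occurs. Large $\chi(C)$ may instead be spread over many classes with $\alpha(W_j)<d$. For example, take $C$ triangle-free of large chromatic number with a private pendant root at each vertex: no star is centered in $D$ at all, and your induction yields nothing.

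The stars must be centered in $C$: take a vertex $v\in X$ with $p(v)$ as one leaf and $d-1$ further leaves in later classes $W_{j'}$. These later leaves are automatically nonadjacent to $p(v)$.
If no such star exists, every vertex of $X$ has at most $(d-2)a$ neighbors in later classes; otherwise an $a$-coloring of them has a class of size $d-1$. Split $X$ by a common $a$-coloring of the sets $X\cap W_j$, and color each part greedily from the last index backwards. This gives $\chi(X)\le a\bigl((d-2)a+1\bigr)$.

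Moreover, even for the stars you do find, iterating toward later indices breaks anticompleteness. A later root $d_{j''}$ may be adjacent to an earlier leaf in $W_j$, because $p$ only controls earlier roots.
You have to take the new star in the later part $X^+$ and recurse into $X^-\setminus N[V(Q)]$. Deleting $N[V(Q)]$, which includes the neighborhood of the root of $Q$, together with the definition of $p$, then gives anticompleteness.
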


For the remainder of this subsection, fix an integer $d\geq3$.
For $r\in\mathbb{N}$, let $g_d^{\mathrm{star}}(r)=\Ram(d,r)$, where
$\Ram(d,r)$ is the corresponding Ramsey number. By
\cref{thm:ramsey}, every $S_d$-free graph $F$ with
$\omega(F)\leq r$ satisfies $\chi(F)\leq g_d^{\mathrm{star}}(r)$.
Indeed, for every $v\in V(F)$, the graph $F[N_F(v)]$ contains
neither a clique of size $r$ nor an independent set of size $d$. Hence
$\deg_F(v)<\Ram(d,r)$, and greedy coloring gives the required bound.

We first prove the analogue of~\cref{lem:rooted-pt} for stars.
For $a\in\mathbb{N}$ and $q\in\mathbb{N}\cup\{0\}$, let
\[\rho_{d,q}(a)=(q+1)(d-2)a^2+\bigl((d+3)q+1\bigr)a.\]

\begin{lemma}\label{lem:local-rooted-star}
Let $a\in\mathbb{N}$ and $k\in\mathbb{N}\cup\{0\}$, and let
$\Xi=(C,D)$ be an $a$-local rooted configuration in a graph
$G$. If $\pi_{S_d}(G,D)\leq k$, then
$\chi(G[C])\leq\rho_{d,k}(a)$.
\end{lemma}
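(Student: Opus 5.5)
I will mirror the proof of \cref{lem:rooted-pt}. Write $p=p_\Xi$ and $W_i=\{v\in C\mid p(v)=d_i\}$, so that $W_i\subseteq N_G(d_i)\cap C$ and $d_i$ is anticomplete to $W_j$ for $i<j$. I will prove by induction on $q$ that every $X\subseteq C$ with $\chi(G[X])>\rho_{d,q}(a)$ contains $q+1$ pairwise anticomplete induced copies of $S_d$. Each copy will be \emph{$D$-rooted of a controlled form}: either a star centred at some $d_j$ with leaves in $X\cap W_j$, or a star centred at a vertex $v\in X$ with one leaf $p(v)$ or all leaves in $X$, the exact form to be fixed when building the base object. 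Applying this with $q=k$ and $X=C$, the roots lie in $D$, which contradicts $\pi_{S_d}(G,D)\leq k$.

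\textbf{Base object.} I first show that if $\chi(G[X])$ exceeds roughly $(d-2)a^2+a$, then there is an index $j$ and an induced $S_d$ rooted at $d_j$ whose non-root vertices lie in $X_{\geq j}$. As in \cref{lem:rooted-pt}, I choose $j$ maximal with $\chi(G[X_{\geq j}])$ large, so that $\chi(G[X_{>j}])$ is small, and I pick a high-chromatic component $H$ of $G[X_{\geq j}]$ that meets $W_j$. I then look at $N(d_j)\cap H$.

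If $N(d_j)\cap H$ contains an independent set of size $d$, then $d_j$ is the centre of the star. Otherwise $\alpha(N(d_j)\cap X)\leq d-1$; I expect the $(d-2)a^2$ term to handle this case. Take a vertex $u\in W_j\cap H$ adjacent to a vertex of $H\setminus N(d_j)$. The set $N(u)\cap H$ is $a$-colourable, and its part nonadjacent to $d_j$ should supply an independent set of size $d-1$. Together with $d_j$ this gives a star centred at $u$ with leaf $d_j$, rooted at $d_j\in D$. The nonadjacency to $d_j$ makes $d_j$ a valid leaf.

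To guarantee $d-1$ independent neighbours of $u$ outside $N(d_j)$, I would iterate. If every vertex has fewer than $d-1$ independent neighbours outside $N(d_j)$, then by Ramsey-type and locality arguments the neighbourhood structure is bounded. Peeling off at most $d-2$ neighbourhood classes, each $a$-colourable, within the $a$-colourable set $N(d_j)\cap H$ gives a set of chromatic number about $(d-2)a^2$, and this should bound $\chi(H)$, which is a contradiction. This base step is the main obstacle. I would first try to obtain the star centred at a vertex of $N(d_j)$ by using a Gyárfás-style extension (\cref{lem:gyarfas-extension-local}, with $q=1$ or $2$) to find $u$ whose non-$d_j$-neighbourhood has large chromatic number. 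Any set of $a(d-1)+1$ colours of chromatic number then forces an independent set of size $d-1$, since a graph with $\alpha\leq d-2$ and neighbourhoods $a$-colourable has bounded $\chi$.

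\textbf{Induction step.} Let $Q$ be the base star and set $Y=X_{\leq j}\setminus N_G[V(Q)]$. The star has $d+1$ vertices, and every vertex of $Q$ has a neighbour in $Q$, so locality removes at most $(d+1)a$ colours. The set $X_{>j}$ costs a few further $a$'s, which I expect to total the $(d+3)a$ increment per step in $\rho_{d,q}$. The set $Y$ avoids $W_j$, so all roots produced inside $Y$ are $d_i$ with $i<j$. Such roots are anticomplete to $Q\cap C\subseteq W_{\geq j}$ by the definition of $p$, and nonadjacent to $d_j$ because $D$ is independent. This gives the required anticompleteness, exactly as in \cref{lem:rooted-pt}, and the additive bookkeeping yields $\rho_{d,q+1}(a)$ from $\rho_{d,q}(a)$.
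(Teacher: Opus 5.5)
\textbf{Verdict: genuine gap in the base step.} Your overall scheme is the paper's. You order $C$ by first neighbours in $D$, and your base object is a copy of $S_d$ containing the first neighbour of its centre. You then delete $N_G[V(Q)]$ from the earlier part of $X$, and anticompleteness comes from the independence of $D$ and the ordering. That induction step is fine, with one correction: drop any ``form'' whose vertices all lie in $X$, since such a copy misses $D$ and does not count towards $\pi_{S_d}(G,D)$.

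The gap is the base step, which you flag as the main obstacle but only address heuristically. In Case~2, picking one $u\in W_j\cap H$ with a neighbour outside $N_G(d_j)$ gives no reason for $u$ to have $d-1$ pairwise nonadjacent such neighbours. The ``peeling'' sketch never produces a colouring of $H$. The Gy\'arf\'as-extension idea cannot work: by locality $N_G(u)\cap C$ is $a$-colourable, so no part of it has large chromatic number. What you need is a \emph{counting} statement, namely that some $u$ has more than $(d-2)a$ neighbours at later levels. Also, discarding $X_{>j}$ costs up to the whole base threshold, not ``a few $a$'s''.

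\textbf{How your case split can be closed.} Let $b=(d-2)a^2+a$ and take $j$ maximal with $\chi(G[X_{\geq j}])>b$, so that $\chi(G[X_{>j}])\le b$.
\begin{itemize}
\item If no copy is centred at $d_j$, then $X\cap W_j$ is $a$-colourable with no independent set of size $d$. Hence $|X\cap W_j|\le (d-1)a$.
\item If no copy is centred at some $u\in X\cap W_j$ with leaf $d_j$ and its other leaves in $X_{>j}$, then each such $u$ has at most $(d-2)a$ neighbours in $X_{>j}$. Here you use that $d_j$ is anticomplete to $X_{>j}$.
\item A $b$-colouring of $G[X_{>j}]$ then extends greedily to $X_{\geq j}$. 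If $a\geq 2$, a vertex of $X\cap W_j$ sees at most $(2d-3)a-1<b$ colours. If $a=1$, then $W_j$ is independent and it sees at most $d-2<b$ colours. This contradicts the choice of $j$.
\end{itemize}
With the resulting $Q$, set $Y=X_{\le j}\setminus N_G[V(Q)]$. Since $\rho_{d,q+1}(a)-\rho_{d,q}(a)=b+(d+2)a$, you get $\chi(G[Y])>\rho_{d,q+1}(a)-b-(d+1)a>\rho_{d,q}(a)$.

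\textbf{Comparison with the paper.} The paper avoids the case on $d_j$ by a global observation. Suppose no $v\in X\cap W_i$, for any $i$, has $d-1$ pairwise nonadjacent neighbours in $X\cap(W_{i+1}\cup\dots\cup W_m)$. The paper splits each $X\cap W_i$ into $a$ colour classes. It then colours each class greedily from level $m$ down to level $1$ with $(d-2)a+1$ colours, so $\chi(G[X])\le b$. Finally, it takes $j$ minimal with $\chi(G[X_{\le j}])>\rho_{d,q}(a)+(d+1)a$ and applies the observation to $X\setminus X_{\le j}$. The completed version of your argument is local, working at a single level. It pays for this with the extra case on $d_j$ and the size bound on $X\cap W_j$.
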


\begin{proof}
Write $D=(d_1,\ldots,d_m)$ and $p=p_{\Xi}$. For every $i\in[m]$,
let \[W_i=\{v\in C\mid p(v)=d_i\}.\] Since
$W_i\subseteq N_G(d_i)\cap C$, we have $\chi(G[W_i])\leq a$.
Moreover, $d_i$ is anticomplete to $W_j$ whenever $i<j$.
Take $b=(d-2)a^2+a$. Then
$\rho_{d,q}(a)=(q+1)b+q(d+2)a$ for every
$q\in\mathbb{N}\cup\{0\}$.

We first show that, whenever $X\subseteq C$ satisfies
$\chi(G[X])>b$, there is an induced copy $Q$ of $S_d$ centered
at a vertex $v\in X$, with $p(v)$ as one leaf and all its other
vertices in $X$. Suppose otherwise. For $v\in X\cap W_i$, let
$N^+(v)=N_G(v)\cap X\cap(W_{i+1}\cup\cdots\cup W_m)$.
The set $N^+(v)$ contains no independent set of size $d-1$.
Indeed, such a set, together with $p(v)=d_i$, would be the
leaves of the required copy centered at $v$, since $d_i$ is
anticomplete to all the later sets $W_j$. Since
$G[N^+(v)]$ is $a$-colorable, it follows that
$|N^+(v)|\leq(d-2)a$.

For every $i\in[m]$, properly color $G[X\cap W_i]$ using colors
from $[a]$. For each $j\in[a]$, let $X_j$ be the set of vertices
assigned color $j$ in these colorings. Within $X_j$, each set
$X_j\cap W_i$ is independent, and every vertex has at most $(d-2)a$
neighbors in sets with larger indices. Coloring the sets
$X_j\cap W_m,\ldots,X_j\cap W_1$ in this order therefore gives
a proper coloring of $G[X_j]$ with at most $(d-2)a+1$ colors.
Using disjoint palettes for $X_1,\ldots,X_a$, we obtain
$\chi(G[X])\leq a\bigl((d-2)a+1\bigr)=b$, a contradiction.
This proves the observation.

We prove by induction on $q$ that, for every $X\subseteq C$
with $\chi(G[X])>\rho_{d,q}(a)$, there are $q+1$ pairwise
anticomplete induced copies of $S_d$, each centered at a vertex
$v\in X$, with $p(v)$ as one leaf and all its other vertices in
$X$. The case $q=0$ follows from the preceding observation,
since $\rho_{d,0}(a)=b$.

Suppose that the assertion holds for $q$, and let $X\subseteq C$
satisfy $\chi(G[X])>\rho_{d,q+1}(a)$. Take
\[c=\rho_{d,q}(a)+(d+1)a.\] For $j\in\{0,\ldots,m\}$, let
\[X_{\leq j}=X\cap(W_1\cup\cdots\cup W_j),\] where
$X_{\leq0}=\emptyset$. Choose $j$ minimum such that
$\chi(G[X_{\leq j}])>c$. Such a $j$ exists because
$X_{\leq m}=X$ and
$\rho_{d,q+1}(a)=c+a+b>c$. Set 

\[ X^-=X_{\leq j}, \quad X^+=X\setminus X^-.\] By the minimality of $j$ and the bound
$\chi(G[W_j])\leq a$, we have
$\chi(G[X^-])\leq c+a$. Consequently,
$\chi(G[X^+])>\rho_{d,q+1}(a)-(c+a)=b$.

By the observation, there is an induced copy $Q$ of $S_d$
centered at a vertex $v\in X^+$, with $p(v)$ as one leaf and
all its other vertices in $X^+$. Let
\[Y=X^-\setminus N_G[V(Q)].\] Since $V(Q)\cap X^-=\emptyset$,
we have
\[X^-\setminus Y\subseteq\bigcup_{z\in V(Q)}(N_G(z)\cap C).\]
The configuration is $a$-local, and $Q$ has $d+1$ vertices,
all belonging to $C\cup D$. Thus,
$\chi(G[X^-\setminus Y])\leq(d+1)a$, and hence
$\chi(G[Y])>c-(d+1)a=\rho_{d,q}(a)$.
By induction, there are $q+1$ pairwise anticomplete induced
copies of $S_d$ of the required form whose vertices in $C$
belong to $Y$.

Each of these copies is anticomplete to $Q$. Its vertices in
$C$ belong to $Y$, and so are anticomplete to $V(Q)$. Its leaf
in $D$ is the first neighbor of its center and therefore occurs
no later than $d_j$. On the other hand, the first neighbor of
every vertex of $Q\cap C$ occurs after $d_j$, since
$Q\cap C\subseteq X^+$. Thus, no leaf in $D$ of an old copy is
adjacent to a vertex of $Q\cap C$. Finally, these old leaves
are distinct from $p(v)$ and nonadjacent to it, because their
indices are at most $j$, the index of $p(v)$ is greater than
$j$, and $D$ is independent. Therefore, the old copies together
with $Q$ form $q+2$ pairwise anticomplete induced copies of
$S_d$ of the required form.

Taking $q=k$ and $X=C$, the inequality
$\chi(G[C])>\rho_{d,k}(a)$ would give a $D$-rooted induced
$S_d$-packing of size $k+1$, since every selected copy contains
a vertex of $D$. This contradicts
$\pi_{S_d}(G,D)\leq k$, and hence proves the lemma.
\end{proof}

We will use the following consequence of
\cref{lem:local-rooted-star}.

\begin{lemma}\label{lem:star-dominated-set}
Let $a\in\mathbb{N}$ and $k\in\mathbb{N}\cup\{0\}$, and let
$G$ be a graph such that $\chi(G[N_G(v)])\leq a$ for every
$v\in V(G)$. Let $C,D\subseteq V(G)$ be such that
	$D\cap C=\emptyset$, the set $D$ dominates $C$, and
	$\pi_{S_d}(G,D)\leq k$. Then
	$\chi(G[C])\leq\bigl((d-1)a+1+(d+1)ka\bigr)\rho_{d,k}(a)$.
\end{lemma}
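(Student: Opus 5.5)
The plan mirrors the proof of \cref{lem:pt-dominated-set}, with the Gyárfás bound for $P_t$-free graphs replaced by the degree bound for $S_d$-free graphs. If $C=\emptyset$ there is nothing to prove, so assume $C\neq\emptyset$. The argument has two steps: first bound $\chi(G[D])$, then split $C$ according to the colour classes of $D$ and apply \cref{lem:local-rooted-star} to each part.

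\textbf{Step 1: colouring $D$.} Let $\mathcal{Q}$ be a maximal collection of pairwise anticomplete induced copies of $S_d$ in $G[D]$, and let $U$ be the union of their vertex sets.
\begin{itemize}
\item These copies lie in $D$, so they form a $D$-rooted induced $S_d$-packing. Hence $|\mathcal{Q}|\leq k$ and $|U|\leq(d+1)k$.
\item By maximality, $D\setminus N_G[U]$ induces an $S_d$-free graph.
\item Every vertex of a star has a neighbour in the star, so $N_G[U]=\bigcup_{u\in U}N_G(u)$. Since each neighbourhood is $a$-colourable, $D\cap N_G[U]$ is $(d+1)ka$-colourable.
\end{itemize}

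The main point is to colour the $S_d$-free part $F=G[D\setminus N_G[U]]$ with $(d-1)a+1$ colours, without using $\omega$; this replaces the Ramsey bound $g_d^{\mathrm{star}}$. For every $v\in V(F)$:
\begin{itemize}
\item $F[N_F(v)]$ has no independent set of size $d$, or $v$ would centre an induced $S_d$ in $F$.
\item $F[N_F(v)]$ is $a$-colourable by hypothesis.
\item Each colour class is independent, so it has fewer than $d$ vertices. Hence $\deg_F(v)\leq(d-1)a$.
\end{itemize}
Greedy colouring then gives $\chi(F)\leq(d-1)a+1$. Using disjoint palettes, we obtain
\[
\chi(G[D])\leq(d-1)a+1+(d+1)ka .
\]

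\textbf{Step 2: splitting $C$.} Let $E_1,\ldots,E_q$ be the colour classes of this colouring of $G[D]$, where $q\leq(d-1)a+1+(d+1)ka$. Assign each vertex of $C$ to some class containing one of its neighbours. This partitions $C$ into parts $C_j$ such that $E_j$ dominates $C_j$.

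For each $j$ with $C_j\neq\emptyset$, order $E_j$ arbitrarily to get a sequence $D_j$. Then $(C_j,D_j)$ is an $a$-local rooted configuration:
\begin{itemize}
\item $E_j$ is independent, disjoint from $C_j$, and dominates $C_j$;
\item locality follows from the $a$-colourability of every neighbourhood in $G$.
\end{itemize}
Since $E_j\subseteq D$, we have $\pi_{S_d}(G,E_j)\leq\pi_{S_d}(G,D)\leq k$. \Cref{lem:local-rooted-star} therefore gives $\chi(G[C_j])\leq\rho_{d,k}(a)$. Using disjoint palettes for the at most $(d-1)a+1+(d+1)ka$ parts yields the claimed bound.
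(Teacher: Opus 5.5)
Your proposal is correct and follows the paper's proof essentially step for step. You use a maximal anticomplete family of copies of $S_d$ in $G[D]$ and colour its closed neighbourhood with $(d+1)ka$ colours. You colour the $S_d$-free remainder greedily via the degree bound $(d-1)a$, and then split $C$ by the colour classes of $D$ and apply \cref{lem:local-rooted-star} to each part.
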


\begin{proof}
If $C=\emptyset$, the assertion is immediate. Thus, assume
that $C\neq\emptyset$, and hence $D\neq\emptyset$.
Let $\mathcal{Q}$ be a maximal collection of pairwise
anticomplete induced copies of $S_d$ in $G[D]$, and let $U$ be
the union of their vertex sets. Since these copies form a
$D$-rooted induced $S_d$-packing in $G$, we have
$|\mathcal{Q}|\leq k$ and $|U|\leq(d+1)k$. By maximality,
the graph $F=G[D\setminus N_G[U]]$ is $S_d$-free.
For every $v\in V(F)$, the graph $F[N_F(v)]$ is $a$-colorable
and has no independent set of size $d$. Therefore,
$\deg_F(v)\leq(d-1)a$, and greedy coloring gives
$\chi(F)\leq(d-1)a+1$.

	Since every vertex of $U$ has a neighbor in $U$, we have
	$N_G[U]=\bigcup_{u\in U}N_G(u)$. Each set $N_G(u)\cap D$ is
	$a$-colorable. Using disjoint palettes, we obtain
	\[\chi(G[D])\leq(d-1)a+1+a|U|
	\leq(d-1)a+1+(d+1)ka.\]
Let $E_1,\ldots,E_q$ be the nonempty color classes of such a
	coloring, where \[q\leq(d-1)a+1+(d+1)ka.\]
Assign every vertex of $C$ to a class containing one of its
neighbors, obtaining pairwise disjoint sets $C_1,\ldots,C_q$
whose union is $C$, such that $E_i$ dominates $C_i$ for every
$i\in[q]$.

For each $i$ with $C_i\neq\emptyset$, order $E_i$ arbitrarily
and let $D_i$ be the resulting sequence. Then
$(C_i,D_i)$ is an $a$-local rooted configuration in $G$:
the set $E_i$ is independent, is disjoint from $C_i$,
and dominates $C_i$, while locality follows from the bound
on the chromatic number of every neighborhood in $G$.
Moreover, $E_i\subseteq D$, and so $\pi_{S_d}(G,E_i)\leq k$.
By~\cref{lem:local-rooted-star}, we have
$\chi(G[C_i])\leq\rho_{d,k}(a)$. Using disjoint palettes for
$C_1,\ldots,C_q$ proves the lemma.
\end{proof}

The following elementary observation gives us the copy of
$S_d$ required in the next proof.

\begin{lemma}\label{lem:backward-star}
Let $G$ be a connected graph, let $Q$ be an induced copy of
$S_d$ in $G$, and let $L_0=V(Q)$. For $j\geq1$, let $L_j$ be
the set of vertices at distance $j$ from $L_0$. Suppose that
every vertex of $G$ has $2d-2$ pairwise nonadjacent neighbors.
If $i\geq2$, then every vertex of $L_0\cup\cdots\cup L_{i-2}$
belongs to an induced copy of $S_d$ contained in
$G[L_0\cup\cdots\cup L_{i-2}]$.
\end{lemma}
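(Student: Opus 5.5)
Fix $i\geq2$ and write $L'=L_0\cup\cdots\cup L_{i-2}$. Take $z\in L'$. The plan is to exhibit an induced copy of $S_d$ inside $G[L']$ that contains $z$. We split into two cases according to the layer of $z$.

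\emph{Case $z\in L_0$.} Then $z\in V(Q)$, and $Q$ lies in $G[L_0]\subseteq G[L']$, so $Q$ itself is the required copy.

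\emph{Case $z\in L_j$ with $1\leq j\leq i-2$.} We build a star centered at a vertex one layer below $z$. Choose a neighbor $w$ of $z$ in $L_{j-1}$; it exists because $z$ lies at distance $j$ from $L_0$. By hypothesis, $w$ has a set $I$ of $2d-2$ pairwise nonadjacent neighbors. Every neighbor of $w$ lies in $L_{j-2}\cup L_{j-1}\cup L_j$ (with $L_{-1}:=\emptyset$). Since $j\leq i-2$, all of these layers are contained in $L'$. Hence the whole closed neighborhood of $w$ lies in $L'$.

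\begin{itemize}
\item If $z\in I$, then $w$ together with $z$ and any $d-1$ further vertices of $I$ induces $S_d$. The leaves are pairwise nonadjacent because $I$ is independent. So this is an induced copy of $S_d$ in $G[L']$ containing $z$.
\item If $z\notin I$, we form a new independent set of neighbors of $w$ containing $z$. Since $z\in N(w)$, the set $I\setminus N(z)$ together with $z$ is independent. It suffices that $z$ is nonadjacent to at least $d-1$ vertices of $I$.
\end{itemize}

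The main obstacle is the second bullet: $z$ could be adjacent to many vertices of $I$, so the count $2d-2$ has to be used more cleverly. Note that $z$ itself also has $2d-2$ pairwise nonadjacent neighbors $J$. The neighbors of $z$ lie in $L_{j-1}\cup L_j\cup L_{j+1}$. These may leave $L'$ only when $j=i-2$, and then only through $L_{j+1}=L_{i-1}$.

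So I would first try a pigeonhole split between $w$ and $z$. If $z$ is adjacent to at least $d-1$ vertices of $I$, those vertices are independent neighbors of $z$, and $w$ is another neighbor of $z$. The aim is to obtain a star centered at $z$ with $d$ independent leaves. The natural candidate set is $w$ together with $I\cap N(z)$, but $w$ is adjacent to all of $I$, so this set is not independent. The split must therefore be refined.

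The refined alternative is to center at $z$ with leaves from $I\cap N(z)$ and to handle the last leaf separately. This would follow if $|I\cap N(z)|\geq d$. If instead $|I\cap N(z)|\leq d-1$, then $z$ is nonadjacent to at least $d-1$ vertices of $I$, and the second bullet applies. With $|I|=2d-2$ the two ranges overlap at the boundary. The exact threshold needs checking: one needs $|I\setminus N(z)|\geq d-1$ or $|I\cap N(z)|\geq d$. These exhaust all possibilities since $(d-2)+(d-1)<2d-2$ fails by one. If this edge case fails, I would instead pick $w$ adaptively, or use $J$ restricted to layers at most $j\leq i-2$.

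In every successful branch, all vertices of the constructed star lie in $N[w]$ or are $z$ together with vertices of $N(w)$. These vertices are all inside $L'$, which completes the argument.
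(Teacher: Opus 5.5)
Your argument is correct and matches the paper's. You pick a neighbour $w\in L_{j-1}$ of $z$ and an independent set $I$ of $2d-2$ neighbours of $w$, all inside $L_0\cup\cdots\cup L_j$. Then either $z\in I$, or $z$ has at least $d-1$ non-neighbours in $I$ (giving a star centred at $w$ with $z$ as a leaf), or $z$ has at least $d$ neighbours in $I$ (giving a star centred at $z$). There is no edge case: $|I\cap N(z)|+|I\setminus N(z)|=2d-2$ and $(d-2)+(d-1)=2d-3<2d-2$, so the conditions $|I\setminus N(z)|\geq d-1$ and $|I\cap N(z)|\geq d$ are exactly complementary. The fallback to $J$ and the exploratory detours (the set $\{w\}\cup(I\cap N(z))$ and the threshold $d-1$) can simply be deleted.
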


\begin{proof}
The assertion is immediate for vertices of $L_0$. Let
$z\in L_j$, where $1\leq j\leq i-2$, choose a neighbor
$u\in L_{j-1}$, and let $Y$ be an independent set of $2d-2$ vertices
in $N_G(u)$. Every vertex of $Y$ belongs to
$L_0\cup\cdots\cup L_j$.
If $z\in Y$, then $u$, together with $z$ and any $d-1$ vertices
of $Y\setminus\{z\}$, induces $S_d$. Suppose that
$z\notin Y$. If $z$ is nonadjacent to at least $d-1$ vertices
of $Y$, then these vertices together with $z$ are the leaves
of a copy of $S_d$ centered at $u$. Otherwise, $z$ is adjacent
to at least $(2d-2)-(d-2)=d$ vertices of $Y$, which contain the
leaves of a copy of $S_d$ centered at $z$. In every case, the
resulting copy contains $z$ and is contained in
$G[L_0\cup\cdots\cup L_j]$, proving the lemma.
\end{proof}

We can now prove~\cref{thm:star-treepi-chi-bounded}.

\begin{proof}[Proof of~\cref{thm:star-treepi-chi-bounded}]
For $r\in\mathbb{N}$ and $k\in\mathbb{N}\cup\{0\}$, let
\[f_d^{\mathrm{star}}(0,r)=g_d^{\mathrm{star}}(r), \qquad f_d^{\mathrm{star}}(k,1)=1.\]
These definitions agree at $(0,1)$,
since $\Ram(d,1)=1$. For $k\geq1$ and $r\geq2$, assuming that
$f_d^{\mathrm{star}}(k,r-1)$ and
$f_d^{\mathrm{star}}(k-1,r)$ are defined, let
\[a=f_d^{\mathrm{star}}(k,r-1), \qquad s=(d-1)a+1+(d+1)ka,\]
and
\[b=f_d^{\mathrm{star}}(k-1,r)+s\rho_{d,k}(a).\]
Since $s\geq(d+1)a$ and $\rho_{d,k}(a)\geq(d+4)a$, we have
\[b\geq(d+1)(d+4)a^2.\]
In particular, $b\geq(d+1)a$ and $2b>(2d-3)a$. Set
\[f_d^{\mathrm{star}}(k,r)=2b.\]
This defines $f_d^{\mathrm{star}}$ by induction on $k+r$.

We prove the assertion by the same induction. The case $r=1$
is immediate. If $k=0$, then $G$ is $S_d$-free: an induced
copy of $S_d$ would meet a bag of every tree decomposition
and give a rooted packing of size one. Hence,
$\chi(G)\leq g_d^{\mathrm{star}}(r)=f_d^{\mathrm{star}}(0,r)$.
Thus, let $k\geq1$ and $r\geq2$, and suppose for a contradiction
that $\treepi_{S_d}(G)\leq k$, $\omega(G)\leq r$, and
$\chi(G)>f_d^{\mathrm{star}}(k,r)=2b$.

Let $H$ be an induced subgraph of $G$ with $\chi(H)=\chi(G)$,
chosen with $|V(H)|$ minimum. Then $H$ is connected, since
otherwise one of its components would have chromatic number
$\chi(H)$. Moreover, $\deg_H(v)\geq\chi(H)-1$ for every
$v\in V(H)$: otherwise, by the minimality of $H$, a proper
coloring of $H-v$ with at most $\chi(H)-1$ colors would extend
to $H$, a contradiction.
For every $v\in V(H)$, the graph $H[N_H(v)]$ has clique number
at most $r-1$ and induced $S_d$-packing treewidth at most $k$.
By induction, $\chi(H[N_H(v)])\leq a$. Since
$\deg_H(v)\geq\chi(H)-1\geq2b>(2d-3)a$, some color class in an
$a$-coloring of $H[N_H(v)]$ has at least $2d-2$ vertices.
Thus, every vertex of $H$ has $2d-2$ pairwise nonadjacent
neighbors. In particular, $H$ contains an induced copy $Q$
of $S_d$.

Let $L_0=V(Q)$ and, for $j\geq1$, let $L_j$ be the set of
vertices at distance $j$ from $L_0$ in $H$. We have
$\chi(H[L_0])=2$. Moreover,
$L_1\subseteq\bigcup_{v\in L_0}N_H(v)$, so
$\chi(H[L_1])\leq(d+1)a$. Since edges join only equal or
consecutive layers (see~\cref{sec:prelim}), we have
$\chi(H)\leq2\max_{j\geq0}\chi(H[L_j])$.
As $\chi(H)>2b$, some layer has chromatic number greater
than $b$. Since $b\geq(d+1)a\geq2$, this layer is $L_i$ for
some $i\geq2$.

Now, fix a tree decomposition $\mathcal{T}=(T,\beta)$ of $H$ with
$\pi_{S_d}(H,\mathcal{T})\leq k$, and let $T_v$ be the
occurrence subtree of $v$ for every $v\in V(H)$. Let
\[L'=L_0\cup\cdots\cup L_{i-2},\] and
\[T'=\bigcup_{v\in L'}T_v.\] Since $H[L']$ is connected, $T'$
is a subtree of $T$. Let
\[A_i=\{v\in L_i\mid T_v\cap T'\neq\emptyset\}, \qquad B_i=L_i\setminus A_i.\]

\begin{claim}
    $\treepi_{S_d}(H[A_i])\leq k-1$.
\end{claim}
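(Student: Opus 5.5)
I will mirror the corresponding claim in the proof of \cref{thm:chi-bounded-paths}, replacing the role of $P_t$ by $S_d$ and using \cref{lem:backward-star} in place of the observation about shortest paths to $v_0$.

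\textbf{Step 1: build the restricted decomposition.} For every $x\in V(T')$, set $\beta_i(x)=\beta(x)\cap A_i$. Then $\mathcal{T}_i=(T',\beta_i)$ is a tree decomposition of $H[A_i]$. To see this, note that for $v\in A_i$ the occurrence set $T_v\cap T'$ is nonempty by definition. It is also connected, being an intersection of two subtrees of $T$. For an edge $uv$ of $H[A_i]$, the three subtrees $T_u$, $T_v$ and $T'$ pairwise intersect: $T_u\cap T_v\neq\emptyset$ because $uv$ is an edge, and both meet $T'$ by the definition of $A_i$. By the Helly property for subtrees of a tree, they share a node of $T'$, whose bag $\beta_i$ contains both $u$ and $v$.

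\textbf{Step 2: bound the width of each bag.} Fix $x\in V(T')$. By the definition of $T'$, there is a vertex $z\in\beta(x)\cap L'$. Every vertex of $H$ has $2d-2$ pairwise nonadjacent neighbors, and $i\geq2$, so \cref{lem:backward-star} applies. It gives an induced copy $Q_x$ of $S_d$ with $z\in V(Q_x)$ and $V(Q_x)\subseteq L'=L_0\cup\cdots\cup L_{i-2}$. Since $L_0,L_1,\ldots$ is a distance layering from $L_0$, there are no edges between $L_{\leq i-2}$ and $L_i$. Hence $Q_x$ is anticomplete to $A_i\subseteq L_i$.

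Now suppose $H[A_i]$ contained a $\beta_i(x)$-rooted induced $S_d$-packing of size $k$. Its members are induced subgraphs of $H$ inside $A_i$, so they are anticomplete to $Q_x$. Adding $Q_x$, which meets $\beta(x)$ at $z$, would therefore give a $\beta(x)$-rooted induced $S_d$-packing of size $k+1$ in $H$. This contradicts $\pi_{S_d}(H,\mathcal{T})\leq k$. Consequently $\pi_{S_d}(H[A_i],\mathcal{T}_i)\leq k-1$, which proves the claim.

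\textbf{Main point to check.} The only delicate step is applying \cref{lem:backward-star} with the right hypotheses. It is stated for a connected graph in which every vertex has $2d-2$ pairwise nonadjacent neighbors, with layers taken at distance $j$ from $V(Q)$. Both conditions hold for $H$ as established in the proof: $H$ is connected by minimality, and every vertex has $2d-2$ pairwise nonadjacent neighbors. The requirement $i\geq2$ was also established there.

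We also need $T'$ to be a subtree, which follows because $H[L']$ is connected: every vertex of $L_j$ has a neighbor in $L_{j-1}$, and $Q$ is connected. If $A_i=\emptyset$, the claim holds trivially, so we may assume $A_i\neq\emptyset$.
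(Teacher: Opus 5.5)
Your proof is correct and follows the paper's argument essentially verbatim. Like the paper, you restrict the bags to $A_i$ on the subtree $T'$, using the Helly property for edges. For each node $x$ of $T'$ you then use the backward-star lemma to get a copy $Q_x$ of $S_d$ inside $H[L']$ through a vertex of $\beta(x)\cap L'$; this copy is anticomplete to $L_i$, so it extends any $\beta_i(x)$-rooted packing of size $k$ to a $\beta(x)$-rooted packing of size $k+1$. Your extra remarks on the connectivity of $T_v\cap T'$, why $T'$ is a subtree, and the trivial case $A_i=\emptyset$ are accurate and just make explicit what the paper leaves implicit.
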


\begin{claimproof}
For $x\in V(T')$, let $\beta_i(x)=\beta(x)\cap A_i$.
Then $\mathcal{T}_i=(T',\beta_i)$ is a tree decomposition of
$H[A_i]$. Indeed, the occurrence set of every $v\in A_i$ is
the nonempty subtree $T_v\cap T'$. If $uv\in E(H[A_i])$,
then $T_u,T_v,T'$ pairwise intersect, and therefore have a
common node by the Helly property for subtrees of a tree.

Fix $x\in V(T')$ and choose $z\in\beta(x)\cap L'$, which
exists by the definition of $T'$. By~\cref{lem:backward-star},
there is an induced copy $Q_x$ of $S_d$ in $H[L']$ containing
$z$. Since $L'$ is anticomplete to $L_i$, a
$\beta_i(x)$-rooted induced $S_d$-packing of size $k$ in
$H[A_i]$, together with $Q_x$, would give a
$\beta(x)$-rooted induced $S_d$-packing of size $k+1$ in $H$.
Consequently, $\pi_{S_d}(H[A_i],\mathcal{T}_i)\leq k-1$,
proving the claim.
\end{claimproof}

By induction,
$\chi(H[A_i])\leq f_d^{\mathrm{star}}(k-1,r)\leq b$.
Since $\chi(H[L_i])>b$, the set $B_i$ is nonempty. Let $C$ be the vertex set of a component
of $H[B_i]$ with maximum chromatic number, and take
$D=N_H(C)\cap L_{i-1}$. Every vertex of $C$ has a neighbor
in $L_{i-1}$, so $D$ dominates $C$.

Let $T_C=\bigcup_{v\in C}T_v$. Since $H[C]$ is connected,
$T_C$ is a subtree of $T$, and $T_C\cap T'=\emptyset$
because $C\subseteq B_i$. Let $P$ be the shortest path in
$T$ joining $T'$ and $T_C$, and let $x$ be its endpoint in
$T'$. Every $z\in D$ has a neighbor in $C$ and a neighbor in
$L_{i-2}$. Hence, $T_z$ meets both $T_C$ and $T'$ and
therefore contains $P$. In particular, $x\in T_z$ for every
$z\in D$, and so $D\subseteq\beta(x)$.

Apply~\cref{lem:star-dominated-set} in $H$. Its hypotheses
hold because $D$ is disjoint from $C$ and dominates $C$,
every neighborhood in $H$ is $a$-colorable, and
$\pi_{S_d}(H,D)\leq\pi_{S_d}(H,\beta(x))\leq k$, as
$D\subseteq\beta(x)$. We obtain
$\chi(H[C])\leq s\rho_{d,k}(a)$.
By the choice of $C$, this gives
$\chi(H[B_i])\leq s\rho_{d,k}(a)$. Consequently,
$\chi(H[L_i])\leq f_d^{\mathrm{star}}(k-1,r)
+s\rho_{d,k}(a)=b$, contrary to the choice of $i$.
This completes the proof of the theorem.
\end{proof}

Together with~\cref{thm:chi-bounded-paths},~\cref{thm:star-treepi-chi-bounded} completes the
proof of~\cref{thm:chi-bounded}.

\medskip

Let us conclude this subsection by remarking that the smallest tree that is
neither a path nor a star is the $S_{2,1,1}$, which is the graph obtained
from $K_{1,3}$ by subdividing one edge once (also known as \textit{chair} or
\textit{fork}). This leads to the following
question:

\begin{question}
    Is it true that graphs of bounded induced $S_{2,1,1}$-packing
treewidth form a $\chi$-bounded class?
\end{question}

\subsection{The Erdős--Hajnal property}
We finish the section with the proof of~\cref{thm:EH}. We use the
following repeated-substitution consequence of a theorem of Alon, Pach,
and Solymosi~\cite[Theorem~1.1]{AlonPachSolymosi}. Let $F$ be a graph
with $V(F)=\{v_1,\ldots,v_h\}$, and let $F_1,\ldots,F_h$ be graphs.
We write $F(F_1,\ldots,F_h)$ for the graph obtained from vertex-disjoint
copies of $F_1,\ldots,F_h$ by making the copy of $F_i$ complete to the
copy of $F_j$ precisely when $v_iv_j\in E(F)$.

\begin{lemma}[Alon, Pach, and Solymosi~\cite{AlonPachSolymosi}]
\label{lem:APS-EH}
If, for $H \in \{F,F_1,\ldots,F_h\}$, $H$-free graphs have the Erdős--Hajnal property, then so
have $F(F_1,\ldots,F_h)$-free graphs.
\end{lemma}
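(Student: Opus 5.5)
The plan is to derive the statement from the single-vertex substitution theorem of Alon, Pach, and Solymosi~\cite[Theorem~1.1]{AlonPachSolymosi}, applying it once for each vertex and inducting on the number of substituted vertices. That theorem reads as follows. Let $F$ be a graph, $v\in V(F)$, and $F'$ another graph. Let $F_{v\to F'}$ denote the graph obtained from $F$ by replacing $v$ with a copy of $F'$ that is complete to $N_F(v)$ and anticomplete to the rest of $F-v$. If $F$-free graphs and $F'$-free graphs both have the Erd\H{o}s--Hajnal property, then so do $F_{v\to F'}$-free graphs.

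For $j\in\{0,\ldots,h\}$, let $\Phi_j=F(F_1,\ldots,F_j,K_1,\ldots,K_1)$. This is the graph in which the vertices $v_1,\ldots,v_j$ are replaced by copies of $F_1,\ldots,F_j$, while $v_{j+1},\ldots,v_h$ remain single vertices. Then $\Phi_0=F$ and $\Phi_h=F(F_1,\ldots,F_h)$.

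The key (routine) step is the identity $\Phi_{j}=(\Phi_{j-1})_{v_j\to F_j}$, where $v_j$ denotes the single vertex of $\Phi_{j-1}$ corresponding to $v_j$. To check it, note the following about $\Phi_{j-1}$:
\begin{itemize}
\item the neighborhood of $v_j$ consists exactly of the union of the copies of $F_i$ for $i<j$ with $v_iv_j\in E(F)$, together with the vertices $v_i$ for $i>j$ with $v_iv_j\in E(F)$;
\item substituting $F_j$ for $v_j$ makes the new copy complete precisely to these parts and anticomplete to the remaining ones;
\item all adjacencies among the other parts are left unchanged.
\end{itemize}
This is exactly the adjacency rule defining $\Phi_j$.

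With the identity in hand, the induction on $j$ proves that $\Phi_j$-free graphs have the Erd\H{o}s--Hajnal property.
\begin{itemize}
\item The base case $j=0$ is the hypothesis for $H=F$.
\item For the inductive step, $\Phi_{j-1}$-free graphs have the property by the induction hypothesis and $F_j$-free graphs have it by assumption. The Alon--Pach--Solymosi theorem then gives the property for $(\Phi_{j-1})_{v_j\to F_j}$-free graphs, which are the $\Phi_j$-free graphs.
\end{itemize}
Taking $j=h$ proves the lemma.

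I expect no real obstacle here. The only points needing care are the identification of $\Phi_j$ with the single substitution, that is, that vertex labels and adjacencies are tracked consistently across steps, and the observation that the exponent $\varepsilon$ may change at each step. The latter is harmless, since only existence of some $\varepsilon>0$ is required and there are finitely many steps.
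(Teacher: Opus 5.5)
Your proposal is correct and follows the paper's approach: the paper gives no separate proof and simply calls the lemma a repeated-substitution consequence of Alon--Pach--Solymosi~\cite[Theorem~1.1]{AlonPachSolymosi}. Your induction on $\Phi_j=F(F_1,\ldots,F_j,K_1,\ldots,K_1)$, using the identity $\Phi_j=(\Phi_{j-1})_{v_j\to F_j}$, is exactly that iteration written out.
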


Let us also record the following simple property of tree decompositions.

\begin{lemma}\label{lem:tree-decomposition-eh}
Let $G$ be a nonempty graph with a tree decomposition $(T,\beta)$, and
let $\delta>0$. If
\[
\max\{\alpha(G[\beta(x)]),\omega(G[\beta(x)])\}
\geq |\beta(x)|^\delta
\qquad\text{for every }x\in V(T),
\]
then
\[
\max\{\alpha(G),\omega(G)\}\geq |V(G)|^{\delta/(\delta+1)}.
\]
\end{lemma}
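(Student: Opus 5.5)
We plan to use \cref{lem:balanced-bag} with the uniform weighting. Let $n=|V(G)|$ and $m=\max\{\alpha(G),\omega(G)\}$. Since $\alpha$ and $\omega$ are monotone under taking induced subgraphs, every bag satisfies
\[
|\beta(x)|\leq m^{1/\delta},
\]
because $\max\{\alpha(\beta(x)),\omega(\beta(x))\}\geq|\beta(x)|^\delta$ and this maximum is at most $m$. So every bag has size at most $s:=m^{1/\delta}$. The aim is to turn small bags into a large independent set, or else a large clique.

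The key step is a standard recursive separator argument. We show by induction on $n$ that $\alpha(G)\geq n/(s+1)$ whenever every bag has size at most $s$. The point is that a graph with a tree decomposition of width at most $s-1$ is $s$-degenerate, hence $s$-colorable. Indeed, a leaf bag contains a vertex appearing in no other bag, and that vertex has all its neighbours inside its bag. Thus $\chi(G)\leq s$, and so $\alpha(G)\geq n/s$. No separator argument is even needed; alternatively, one can recurse with \cref{lem:balanced-bag}.

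Combining the two bounds gives $m\geq\alpha(G)\geq n/s=n\,m^{-1/\delta}$. Hence $m^{1+1/\delta}\geq n$, that is, $m\geq n^{\delta/(\delta+1)}$, which is the claim. We restrict to bags that are nonempty. Empty bags satisfy the hypothesis only vacuously, since $0\geq 0^\delta$, and they do not affect the degeneracy bound.

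The only delicate point is the degeneracy claim. Its justification is the classical fact that a graph of treewidth $w$ has a vertex of degree at most $w$. To see this, take a leaf node $x$ of $T$ with neighbour $y$. We may assume $\beta(x)\not\subseteq\beta(y)$, since otherwise we contract that edge of $T$. Any vertex $v\in\beta(x)\setminus\beta(y)$ then lies only in $\beta(x)$, so $N(v)\subseteq\beta(x)\setminus\{v\}$, and $v$ has degree at most $s-1$.

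Since induced subgraphs inherit tree decompositions with bags of size at most $s$, the degeneracy bound holds for every induced subgraph. Greedy coloring along repeated removal of such low-degree vertices then gives $\chi(G)\leq s$, which completes the argument.
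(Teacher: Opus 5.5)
Your argument is correct and is essentially the paper's: monotonicity of $\alpha$ and $\omega$ bounds every bag by $m^{1/\delta}$, bounded treewidth then gives $\chi(G)\le m^{1/\delta}$, and $|V(G)|\le\alpha(G)\chi(G)\le m^{1+1/\delta}$ finishes it. The opening appeal to balanced separators and the announced $n/(s+1)$ induction are never used and can simply be dropped.
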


\begin{proof}
Set
\[
m=\max\{\alpha(G),\omega(G)\},
\qquad
b=\max_{x\in V(T)}|\beta(x)|.
\]
Applying the hypothesis to a bag of size $b$ gives $b^\delta\leq m$.
Also, $\tw(G)\leq b-1$, so $\chi(G)\leq b$. Consequently,
\[
|V(G)|\leq\alpha(G)\chi(G)\leq mb\leq m^{1+1/\delta},
\]
which implies the result.
\end{proof}

\EH*

\begin{proof}
If $\mathcal{G}_{H,k}$ has the Erdős--Hajnal property, then the class
of $H$-free graphs does too, because every $H$-free graph has
$\treepi_H(G)=0$. Conversely, suppose that $H$-free graphs have the
Erdős--Hajnal property.

Let $q=k+1$, let $I_q$ denote the edgeless graph on $q$ vertices, and
let $qH$ denote the disjoint union of $q$ copies of $H$. Since
$qH=I_q(H,\ldots,H)$,~\cref{lem:APS-EH} shows that $qH$-free graphs have the
Erdős--Hajnal property: the property holds for $H$ by assumption and
for $I_q$ by Ramsey's theorem. Hence, there is a constant $\delta>0$
such that every $qH$-free graph $J$ satisfies
\[
\max\{\alpha(J),\omega(J)\}\geq |V(J)|^\delta.
\]

Let $G\in\mathcal{G}_{H,k}$. If $G$ is empty, there is nothing to
prove, so fix a tree decomposition $\mathcal{T}=(T,\beta)$ of $G$
with $\pi_H(G,\mathcal{T})\leq k$. Every bag $G[\beta(x)]$ is
$qH$-free: otherwise, it contains $q=k+1$ pairwise anticomplete
induced copies of $H$, which form a $\beta(x)$-rooted induced-$H$
packing in $G$, contrary to $\pi_H(G,\mathcal{T})\leq k$. Therefore,
\[
\max\{\alpha(G[\beta(x)]),\omega(G[\beta(x)])\}
\geq |\beta(x)|^\delta
\qquad\text{for every }x\in V(T).
\]
Applying~\cref{lem:tree-decomposition-eh} gives
\[
\max\{\alpha(G),\omega(G)\}\geq |V(G)|^{\delta/(\delta+1)}.
\]
Since $\delta$ depends only on $H$ and $k$, this proves that
$\mathcal{G}_{H,k}$ has the Erdős--Hajnal property.
\end{proof}

\section{Sim-width and induced packing treewidth}\label{sec:sim}
In this section, we characterize the relationship between sim-width and induced $H$-packing treewidth. We first introduce the rooted $H$-obstructions and establish that they inherently carry large induced $H$-packing treewidth.
We then show that sim-width and induced $P_3$-packing treewidth are strictly incomparable in general.
Building on this structural barrier, we finally prove \cref{thm:simw-rooted-H-obstruction-restate}, which provides a unified framework that simultaneously generalizes and resolves \cref{question:abrishami-etal,question:Brettell-etal,question:Storgel-etal}.

\hobstruction*

\subsection{Branch decompositions and sim-width}
We start with a formal introduction of branch decompositions and sim-width.
Branch decompositions provide a common framework for several width
parameters that measure the complexity of cuts in a graph. Let $G$ be a
graph with at least two vertices. A \defn{branch decomposition} of $G$ is a
pair $\mathcal D=(T,\delta)$, where $T$ is a subcubic tree and $\delta$ is a
bijection from $V(G)$ to the set of leaves of $T$. Every edge $e\in E(T)$
displays a bipartition $(A_e,\overline A_e)$ of $V(G)$: the two components
of $T-e$ contain respectively the leaves in $\delta(A_e)$ and
$\delta(\overline A_e)$, where
$\overline A_e=V(G)\setminus A_e$. The names of the two shores may be
interchanged.

For disjoint sets $A,B\subseteq V(G)$, we denote by $G[A,B]$ the bipartite
graph with bipartition $(A,B)$ whose edges are precisely the edges of $G$
with one endpoint in $A$ and the other in $B$. Thus,
$G[A,\overline A]$ records exactly the edges of $G$ crossing the cut
$(A,\overline A)$.

We use the standard definition of sim-width due to Kang, Kwon, Str{\o}mme,
and Telle~\cite{KangKwonStrommeTelle2017}. For $A\subseteq V(G)$, let
$\overline A=V(G)\setminus A$. We denote by
$\mathsf{cutsim}_G(A,\overline A)$ the maximum size of a matching
\[
    M=\{x_1y_1,\ldots,x_my_m\}
\]
such that $x_i\in A$ and $y_i\in\overline A$ for every $i\in[m]$, and the
endpoint set
\[
    \{x_1,\ldots,x_m,y_1,\ldots,y_m\}
\]
induces exactly the edges $x_1y_1,\ldots,x_my_m$ in $G$. Equivalently, the
selected endpoints in $A$ are independent, the selected endpoints in
$\overline A$ are independent, and no off-diagonal edge $x_iy_j$ with
$i\ne j$ is present. Such a matching is called a \defn{sim-matching} across
the cut $(A,\overline A)$.

The \defn{sim-width} of a branch decomposition
$\mathcal D=(T,\delta)$ of $G$ is
\[
    \simw_G(\mathcal D)
    =
    \max_{e\in E(T)}
    \mathsf{cutsim}_G(A_e,\overline A_e),
\]
and the \defn{sim-width} of $G$ is
\[
    \simw(G)
    =
    \min_{\mathcal D}\simw_G(\mathcal D),
\]
where the minimum is taken over all branch decompositions of $G$. For
graphs on at most one vertex, set $\simw(G)=0$.

\subsection{Rooted obstructions}

We start by defining the rooted $H$-obstructions. Let $H$ be a graph, let $u,v\in V(H)$, and let $r\in\mathbb N$. An \defn{$(H,u,v)$-obstruction of order $r$} is a graph $F$ for which there is a partition
\[
    V(F)=X_1\mathbin{\dot\cup}\cdots\mathbin{\dot\cup}X_r
    \mathbin{\dot\cup}
    Y_1\mathbin{\dot\cup}\cdots\mathbin{\dot\cup}Y_r
\]
and isomorphisms
\[
    \varphi_i:H\to F[X_i]
    \qquad\text{and}\qquad
    \psi_i:H\to F[Y_i]
    \qquad\text{for every }i\in[r],
\]
such that $X_i$ is anticomplete to $X_j$ and $Y_i$ is anticomplete to $Y_j$ for all distinct $i,j\in[r]$, and
$\varphi_i(u)\psi_j(v)\in E(F)$ for all $i,j\in[r]$.
No condition is imposed on the remaining edges between $\bigcup_{i\in[r]}X_i$ and $\bigcup_{j\in[r]}Y_j$. A \defn{rooted $H$-obstruction of order $r$} is an $(H,u,v)$-obstruction of order $r$ for some $u,v\in V(H)$. See Figure~\ref{fig:rooted-obstruction} for an example.

When $H=K_1$, rooted $H$-obstructions of order $r$ are precisely induced copies of $K_{r,r}$. When $H=P_2$, they are precisely the $r$-obstructions from~\cref{question:abrishami-etal}.

\begin{figure}[t]
	\centering
	\includegraphics[width=0.55\linewidth]{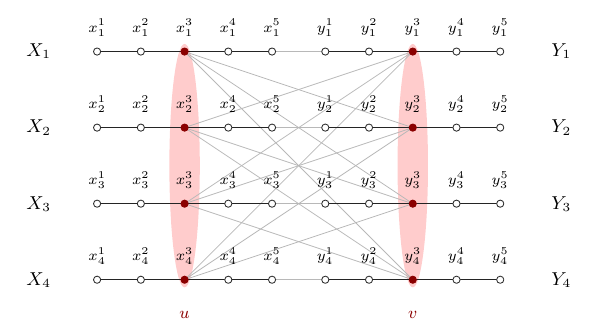}
    \caption{A rooted $P_5$-obstruction of order $r=4$.}
    \label{fig:rooted-obstruction}
\end{figure}

The following lemma explains the terminology.

\begin{lemma}\label{lem:rooted-H-obstruction-lower-bound}
Let $H$ be a graph and let $F$ be a rooted $H$-obstruction of order $r$. Then
$
    \treepi_H(F)\geq r.
$
\end{lemma}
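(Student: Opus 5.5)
Fix an arbitrary tree decomposition $(T,\beta)$ of $F$. We will find a node $x$ such that $\beta(x)$ meets $r$ of the copies, and these copies are pairwise anticomplete. That gives $\pi_H(F,\beta(x))\geq r$, hence $\treepi_H(F)\geq r$. Since $\treepi_H(F)$ is a minimum over decompositions, this suffices.

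Write $x_i=\varphi_i(u)$ and $y_j=\psi_j(v)$. For $i\in[r]$, let $T_{X_i}=\bigcup_{w\in X_i}T_w$, where $T_w$ is the occurrence subtree of $w$. Define $T_{Y_j}$ analogously. Each $F[X_i]\cong H$ and $F[Y_j]\cong H$ is connected if $H$ is. In that case these unions are subtrees of $T$. For a disconnected $H$, I would instead use the single-vertex subtrees $T_{x_i}$ and $T_{y_j}$, which is enough because we only need bags meeting the copies. So I will work with $T_{x_i}$ and $T_{y_j}$ directly, since they are always subtrees.

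\textbf{Key step.} For every pair $i,j$, the edge $x_iy_j$ forces $T_{x_i}\cap T_{y_j}\neq\emptyset$. This is the standard biclique argument. Suppose first that the family $\{T_{x_i}\}_{i\in[r]}$ has a common node $x$. Then $\beta(x)$ contains all the $x_i$, so it meets each $X_i$. The sets $X_1,\ldots,X_r$ induce copies of $H$ and are pairwise anticomplete, so they form a $\beta(x)$-rooted induced $H$-packing of size $r$. Otherwise, by the Helly property some two of them, say $T_{x_1}$ and $T_{x_2}$, are disjoint. Then every $T_{y_j}$ meets both, so it contains the path in $T$ connecting them, and in particular its endpoint $z$ on $T_{x_1}$. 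Hence $\beta(z)$ contains all of $y_1,\ldots,y_r$, and $Y_1,\ldots,Y_r$ give the desired packing.

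The main (mild) obstacle is using Helly correctly. Pairwise intersection of subtrees gives a common node, so if the $T_{x_i}$ have no common node, some pair is disjoint. The rest is verifying that the disjointness of the $X_i$ (respectively the $Y_j$) and their anticompleteness match the definition of an induced packing.
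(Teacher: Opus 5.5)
Your proposal is correct and matches the paper's proof essentially step for step. The paper likewise uses the Helly property on the subtrees $T_{x_i}$, and otherwise takes two disjoint ones, notes that every $T_{y_j}$ contains the path joining them, and concludes that some bag meets all $X_i$ or all $Y_j$. As you already noticed, the aside about the subtrees $T_{X_i}$ is unnecessary, since only the root vertices $x_i$ and $y_j$ are used.
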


\begin{proof}
Let $F$ be an $(H,u,v)$-obstruction of order $r$, witnessed by $X_1,\ldots,X_r,Y_1,\ldots,Y_r$ and isomorphisms $\varphi_i,\psi_i$. Let
\[
    x_i=\varphi_i(u)
    \qquad\text{and}\qquad
    y_i=\psi_i(v)
    \qquad\text{for every }i\in[r].
\]
Let $\mathcal T=(T,\beta)$ be a tree decomposition of $F$, and for $z\in V(F)$ let
\[
    T_z=\{t\in V(T)\mid z\in\beta(t)\}.
\]
Since $x_iy_j\in E(F)$ for all $i,j\in[r]$, we have
$T_{x_i}\cap T_{y_j}\ne\emptyset$ for all $i,j\in[r]$. If the
subtrees $T_{x_1},\ldots,T_{x_r}$ are pairwise intersecting, then
they have a common node by the Helly property. Otherwise, choose
distinct $i,i'\in[r]$ such that
$T_{x_i}\cap T_{x_{i'}}=\emptyset$, and let $Q$ be the unique path
of $T$ joining these two subtrees. Every subtree $T_{y_j}$ intersects
both $T_{x_i}$ and $T_{x_{i'}}$, and therefore contains $Q$.
Thus, some bag of $\mathcal T$ intersects every member of $\{X_i\mid i\in[r]\}$ or every member of $\{Y_j\mid j\in[r]\}$. Both families are induced $H$-packings of size $r$. Since $\mathcal T$ was arbitrary, $\treepi_H(F)\geq r$. This proves~\cref{lem:rooted-H-obstruction-lower-bound}.
\end{proof}

\subsection{Incomparability with sim-width}

For every integer $n\geq 1$, define a graph $G_n$ as follows; see Figure~\ref{fig:Gn-construction}. Start with a complete bipartite graph with bipartition $A_n=\{a_1,\ldots,a_n\}$ and $B_n=\{b_1,\ldots,b_n\}$. For every vertex $v\in A_n\cup B_n$, add two private leaves $v'$ and $v''$, each adjacent only to $v$. We call $C_n=A_n\cup B_n$ the \defn{core} of $G_n$.

\begin{proposition}\label{prop:teepi-versus-simwidth}
The following statements hold.
\begin{enumerate}[\rm (i)]
    \item There is a graph class $\mathcal G_1$ such that $\treepi_{P_3}(G)\leq 2$ for every $G\in\mathcal G_1$, but $\mathcal G_1$ has unbounded sim-width.\label{clm:sim-i}

    \item There is a graph class $\mathcal G_2$ such that $\simw(G)\leq 1$ for every $G\in\mathcal G_2$, but $\{\treepi_{P_3}(G)\mid G\in\mathcal G_2\}$ is unbounded.\label{clm:sim-ii}
\end{enumerate}
Consequently, the parameters $\treepi_{P_3}$ and sim-width are incomparable.
\end{proposition}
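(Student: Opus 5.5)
\noindent\textbf{Part (ii).} I would take $\mathcal G_2=\{G_n\mid n\geq1\}$, the graphs just defined. For the lower bound on $\treepi_{P_3}$, observe that $G_n$ contains a rooted $P_3$-obstruction of order $n$. Let $c$ be the center of $P_3$. Set $X_i=\{a_i',a_i,a_i''\}$ and $Y_j=\{b_j',b_j,b_j''\}$. Each of these sets induces $P_3$ with its core vertex as center. The sets $X_i$ are pairwise anticomplete, because $A_n$ is independent and the leaves are private; the same holds for the sets $Y_j$. Finally, every $a_i$ is adjacent to every $b_j$. So $G_n$ is a $(P_3,c,c)$-obstruction of order $n$, and \cref{lem:rooted-H-obstruction-lower-bound} gives $\treepi_{P_3}(G_n)\geq n$.

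For $\simw(G_n)\leq1$, I would use a caterpillar branch decomposition in which the three vertices $v,v',v''$ of each core vertex $v$ are consecutive leaves. Every cut $(A,\overline A)$ displayed by this decomposition then separates at most one core vertex from its own leaves, and all other crossing edges are core--core edges between $A_n$ and $B_n$. Suppose a sim-matching contains two core--core edges $x_1y_1,x_2y_2$ with $x_1,x_2\in A$. Since $x_1,x_2$ are independent, they lie in the same part, say $A_n$. Then $y_2\in B_n$, so $x_1y_2$ is an edge, which is forbidden in a sim-matching. The remaining cases need a short check, including the possible single leaf edge at the one split group. Checking them, I expect every cut to have $\mathsf{cutsim}\leq1$.

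\noindent\textbf{Part (i).} Here the plan is to choose a class in which every graph is $3P_3$-free, i.e.\ has no three pairwise anticomplete induced copies of $P_3$. For such a graph the trivial one-bag decomposition already gives $\treepi_{P_3}\leq2$, so the content of part (i) is entirely in the sim-width lower bound. A natural $3P_3$-free family is the graphs whose vertex set is covered by four cliques. An induced $P_3$ is not a clique, so it meets at least two of the four cliques. Pairwise anticomplete subgraphs cannot meet a common clique. Hence three pairwise anticomplete copies of $P_3$ would need at least six distinct cliques, which is impossible.

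It then remains to find, inside this family, graphs of unbounded sim-width. I would take complements of suitable $4$-colorable graphs, for instance complements of sufficiently ``grid-like'' $4$-partite graphs that are not comparability graphs. I would bound sim-width from below by a balanced-cut argument. In any branch decomposition, some edge displays a cut with both shores of linear size. I would then aim to show that every such cut admits a large sim-matching, by selecting, inside one color class, many vertices together with private non-neighbors on the other shore.

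\noindent\textbf{Main obstacle.} The main obstacle is this last step. Co-bipartite graphs are co-comparability graphs and therefore have bounded sim-width, so the construction must genuinely use at least three or four cliques in an essential, non-co-comparability way. If the four-clique approach fails, my fallback is to start from a known class of unbounded sim-width. I would then add structure that forces all induced copies of $P_3$ to pass through two fixed cliques, for example by making a large part of the graph complete to everything else, which kills anticomplete copies of $P_3$ while hopefully preserving large sim-matchings across balanced cuts.
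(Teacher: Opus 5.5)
Your part (ii) is correct and is essentially the paper's argument. You use the same graphs $G_n$, the same $(P_3,c,c)$-obstruction, and a decomposition of sim-width one. Your caterpillar, with each triple $v,v',v''$ consecutive, works as well as the paper's gadgets. The check you deferred is immediate: all crossing leaf edges share the split core vertex $v$, and $v$ is adjacent to whichever endpoint of a crossing core edge $xy$ lies in the opposite part of the core.

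Part (i), however, has a genuine gap, and your main route cannot be completed. The reduction to $3P_3$-free graphs via the one-bag decomposition is exactly the paper's. But graphs covered by four cliques never have large sim-width. The endpoints of a sim-matching on one shore of a cut are pairwise nonadjacent, so $\mathsf{cutsim}_G(A,\overline A)\leq\alpha(G)$ for every cut, and hence $\simw(G)\leq\alpha(G)\leq4$. Your planned step of selecting many vertices of one color class already violates this, since those vertices form a clique of $G$. Any witness class for (i) must therefore have unbounded independence number. The co-comparability consideration is beside the point.

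The fallback does not close the gap either. Suppose $U$ is complete to $R=V(G)\setminus U$. Then every family of at least two pairwise anticomplete subgraphs lies entirely in $U$ or entirely in $R$. The same holds for the endpoint set of every sim-matching with at least two edges. So one side must itself be $3P_3$-free with large sim-width, which is the original problem.

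Forcing every induced $P_3$ to meet one of $k$ fixed cliques $Q_1,\ldots,Q_k$ fails too. The remaining vertices then induce a disjoint union of cliques. Take the caterpillar decomposition along the linear order that lists those cliques contiguously, followed by $Q_1\cup\cdots\cup Q_k$. Every displayed cut of it has cut-sim value at most $2k+1$.

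What is missing is an actual class of $3P_3$-free graphs of unbounded sim-width. The paper does not construct one by hand. It takes $\mathcal G_1$ to be the $(K_6,3P_3)$-free graphs and invokes the theorem of Munaro and Yang~\cite{MunaroYang2023}, building on Kang, Kwon, Str{\o}mme and Telle~\cite{KangKwonStrommeTelle2017}: $(K_6,uP_3)$-free graphs have unbounded sim-width for every $u\geq3$.
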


\begin{figure}[t]
    \centering
    \includegraphics[width=0.37\linewidth]{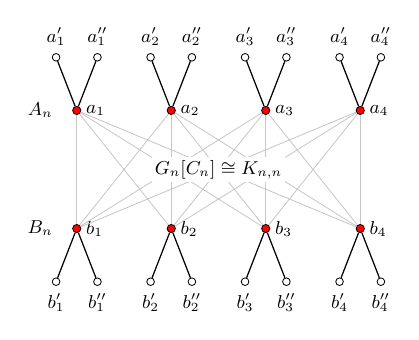}
    \caption{The graph $G_n$, illustrated for $n=4$. The core $C_n=A_n\cup B_n$ induces a complete bipartite graph $K_{n,n}$. Each core vertex $v\in C_n$ has two private leaves $v'$ and $v''$.}
    \label{fig:Gn-construction}
\end{figure}

\begin{proof}
We first prove~\ref{clm:sim-i}. Let $\mathcal G_1$ be the class of $(K_6,3P_3)$-free graphs. Munaro and Yang~\cite{MunaroYang2023}, using a result of Kang, Kwon, Strømme and Telle~\cite{KangKwonStrommeTelle2017}, showed that, for every $u\geq 3$, the class of $(K_6,uP_3)$-free graphs has unbounded sim-width. Hence $\mathcal G_1$ has unbounded sim-width.
If $G\in\mathcal G_1$, then $G$ has no induced $P_3$-packing of size three, and therefore $\pi_{P_3}(G,V(G))\leq 2$. The one-bag tree decomposition of $G$ now gives $\treepi_{P_3}(G)\leq 2$. This proves~\ref{clm:sim-i}.

We now prove~\ref{clm:sim-ii}. Let $\mathcal G_2=\{G_n\mid n\geq 1\}$. Fix $n\geq 1$. Choose a branch decomposition
$\mathcal D^0=(R^0,\delta^0)$ of $G_n[C_n]$ such that no two leaves
of $R^0$ are adjacent. When $n=1$, take $R^0$ to be the path on
three vertices, with its two leaves corresponding to $a_1$ and
$b_1$. For every $v\in C_n$, let $\ell_v^0=\delta^0(v)$ and let $p_v$ be its neighbor in $R^0$. Delete $\ell_v^0$, add adjacent vertices $x_v,y_v$ and leaves $\ell_v,\ell_{v'},\ell_{v''}$, and add the edges
\[
    p_vx_v,\quad x_v\ell_v,\quad x_vy_v,\quad
    y_v\ell_{v'},\quad y_v\ell_{v''}.
\]
Set $\delta(v)=\ell_v$, $\delta(v')=\ell_{v'}$, and $\delta(v'')=\ell_{v''}$. Performing this operation for every $v\in C_n$ gives a subcubic tree $R$ whose leaves are precisely the vertices in the image of $\delta$. Thus, $\mathcal D=(R,\delta)$ is a branch decomposition of $G_n$.

We claim that every cut displayed by $\mathcal D$ has cut-sim value at most one. For a cut displayed by one of the new edges associated with a vertex $v\in C_n$, either every crossing edge is incident with $v$, or the crossing edges are $vv'$ and $vv''$. Hence such a cut admits no sim-matching of size two.
For every other displayed cut, each private leaf lies on the same side as its neighbor in the core. Thus, only edges of the complete bipartite graph $G_n[C_n]$ cross the cut. Suppose that two crossing core edges form a sim-matching. If they have the same orientation across the cut, then the corresponding off-diagonal core edges are present. If they have opposite orientations, then two selected endpoints on one side of the cut are adjacent. Both cases contradict the definition of a sim-matching. Therefore $\simw_{G_n}(\mathcal D)\leq 1$, and hence $\simw(G_n)\leq 1$.

For each $i\in[n]$, let
\[
    X_i=\{a_i',a_i,a_i''\}
    \qquad\text{and}\qquad
    Y_i=\{b_i',b_i,b_i''\}.
\]
The graph $G_n$ is a rooted $P_3$-obstruction of order $n$, with roots $a_i$ in $X_i$ and $b_i$ in $Y_i$. By~\cref{lem:rooted-H-obstruction-lower-bound}, $\treepi_{P_3}(G_n)\geq n$. This proves~\ref{clm:sim-ii}, and hence the proposition.
\end{proof}

\subsection{Sim-width of blob graphs}

We next establish the sim-width ingredient for the converse direction,
using blob graphs introduced and studied by~\cite{gartland2021finding}.
The \defn{blob graph} $G^\circ$ of a graph $G$ has as its vertices the nonempty connected subsets of $V(G)$. Two distinct vertices $X,Y\in V(G^\circ)$ are adjacent if $X\cap Y\ne\emptyset$ or there is an edge of $G$ between $X$ and $Y$.

Let $\cH\ne\emptyset$ be a fixed finite family of connected graphs. For a graph $G$ and a set $B\subseteq V(G)$, let
\begin{align*}
    G^\circ_{\cH}
    &=G^\circ[\{X\subseteq V(G)\mid G[X]\text{ is isomorphic to a member of }\cH\}],\\
    G^\circ_{\cH}(B)
    &=G^\circ_{\cH}[\{X\in V(G^\circ_{\cH})\mid X\cap B\ne\emptyset\}].
\end{align*}
Thus,
\[
    \pi_{\cH}(G,B)=\alpha(G^\circ_{\cH}(B)).
\]
For a connected graph $H$, we write $G_H^\circ=G_{\{H\}}^\circ$.

We first need the following result from our earlier work~\cite[Lemma~9.1]{nikabadi2026induced}.

\begin{restatable}{lemma}{blobequivalence}\label{lem:blob-equivalence}
Let $\cH\neq\emptyset$ be a fixed finite family of connected graphs. For every graph $G$,
$\treepi_{\cH}(G)=\treealpha(G^\circ_{\cH})$.
Furthermore, given a tree decomposition of either graph, a corresponding tree decomposition of the other can be computed in time polynomial in $|V(G)|$, $|V(G^\circ_\cH)|$, and the size of the given decomposition.
\end{restatable}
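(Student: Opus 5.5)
We will prove the two inequalities $\treealpha(G^\circ_{\cH})\leq\treepi_{\cH}(G)$ and $\treepi_{\cH}(G)\leq\treealpha(G^\circ_{\cH})$ by explicit translations of tree decompositions. Throughout we use the identity $\pi_{\cH}(G,B)=\alpha(G^\circ_{\cH}(B))$ noted above.

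\emph{From $G$ to $G^\circ_{\cH}$.} Let $(T,\beta)$ be a tree decomposition of $G$. For every node $x$, define
\[
\beta^\circ(x)=\{X\in V(G^\circ_{\cH})\mid X\cap\beta(x)\neq\emptyset\}.
\]
We check the three axioms.
\begin{itemize}
\item Every blob $X$ is nonempty, so it meets some bag, and hence lies in some $\beta^\circ(x)$.
\item The node set $\{x\mid X\in\beta^\circ(x)\}$ equals $\bigcup_{z\in X}T_z$. Since $G[X]$ is connected, this is a union of subtrees glued along edges of $G[X]$, so it is connected.
\item Let $XY$ be an edge of $G^\circ_{\cH}$. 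If $X\cap Y\neq\emptyset$, any bag containing a common vertex contains both blobs. Otherwise there is an edge $zw$ of $G$ with $z\in X$ and $w\in Y$, and a bag containing $z,w$ contains both blobs.
\end{itemize}
Finally, $\beta^\circ(x)=V(G^\circ_{\cH}(\beta(x)))$, so $\alpha(\beta^\circ(x))=\pi_{\cH}(G,\beta(x))$. This gives $\treealpha(G^\circ_{\cH})\leq\treepi_{\cH}(G)$.

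\emph{From $G^\circ_{\cH}$ to $G$.} This is the main obstacle, because vertices of $G$ lying in no copy of a member of $\cH$ are invisible in $G^\circ_{\cH}$. Let $(T,\gamma)$ be a tree decomposition of $G^\circ_{\cH}$. The natural first step is to set $\beta(x)=\bigcup\gamma(x)$, the union of the blobs in the bag. For a vertex $z$ covered by some blob, the set $\{x\mid z\in\beta(x)\}$ is the union of the subtrees of the blobs containing $z$. These blobs pairwise intersect, hence are pairwise adjacent in $G^\circ_{\cH}$, so their subtrees pairwise intersect. By the Helly property the union is connected.

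An edge $zw$ with both endpoints covered causes trouble when the blobs covering $z$ and those covering $w$ are not adjacent. Here I would argue directly: take blobs $X\ni z$ and $Y\ni w$. The edge $zw$ runs between $X$ and $Y$, unless they intersect, so $X$ and $Y$ are adjacent in $G^\circ_{\cH}$ in either case. Hence some bag contains both, and that bag's union contains $z$ and $w$.

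Next, the cost. A $\beta(x)$-rooted induced $\cH$-packing consists of blobs $Q_1,\dots,Q_m$ with each $Q_i$ meeting $\beta(x)$, say at $z_i\in Y_i\in\gamma(x)$. Each $Y_i$ equals or is adjacent to $Q_i$, but the $Y_i$ need not be pairwise nonadjacent, so $m$ is not yet bounded by $\alpha(\gamma(x))$. I would repair this by first refining $(T,\gamma)$ so that each $Q_i$ itself lies in a bag containing $Y_i$. A cheaper alternative is to work in the closure: replace $\gamma$ by $\gamma'(x)=\{Q\mid Q\cap\bigcup\gamma(x)\neq\emptyset\}$. One must then show that this does not increase $\alpha$ beyond a function of the original, or better still that it preserves equality. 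I expect this verification to need care.

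Uncovered vertices of $G$ are then attached with one extra step. Each uncovered component, or each remaining edge, is added via new leaf bags hung off a bag containing its covered neighbours; this is possible by the Helly property on those neighbours' subtrees. These uncovered vertices lie in no copy of a member of $\cH$, so adding them creates no new rooted packing members. The polynomial-time claim follows because every construction above is an explicit local operation on bags.
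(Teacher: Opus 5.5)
Your first direction is correct: $(T,\beta^\circ)$ with $\beta^\circ(x)=V(G^\circ_{\cH}(\beta(x)))$ is a tree decomposition of $G^\circ_{\cH}$, and its bags have independence number exactly $\pi_{\cH}(G,\beta(x))$. The converse is not proved, however. The obstacle you flag is real, not just a matter of care. With $\beta(x)=\bigcup\gamma(x)$ the bound $\pi_{\cH}(G,\beta(x))\le\alpha(\gamma(x))$ is false, and your closure $\gamma'(x)$ can have unbounded independence number.

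Here is a counterexample. Let $\cH=\{P_2\}$, and let $G$ be a clique $c_1,\dots,c_m$ with a pendant path $c_i-a_i-b_i$ at each $c_i$. Identify each blob with the corresponding edge of $G$. The blobs $\{c_ic_j\mid i<j\}\cup\{c_1a_1,\dots,c_ma_m\}$ form a clique in $G^\circ_{P_2}$. Take this as a central bag and attach leaf bags $\{a_ib_i,\,c_ia_i\}\cup\{c_ic_j\mid j\neq i\}$, which are also cliques. This gives a tree decomposition of independence number $1$. Yet the union of the central bag contains $a_1,\dots,a_m$, so it roots the pairwise anticomplete edges $a_1b_1,\dots,a_mb_m$. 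Hence $\pi_{P_2}(G,\bigcup\gamma(x))\ge m$, and $\gamma'$ of that bag contains this independent set of size $m$. Enlarging bags cannot work, and the ``refinement'' alternative is too unspecified to evaluate.

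The missing idea is to shrink instead. Put $v\in\beta(x)$ exactly when every blob containing $v$ belongs to $\gamma(x)$.
\begin{itemize}
\item The blobs through $v$ pairwise intersect. By the Helly property, the set of nodes where $v$ appears is a nonempty intersection of subtrees, hence a subtree.
\item For $uv\in E(G)$, every blob through $u$ equals or is adjacent to every blob through $v$. So the union of these two families is a clique in $G^\circ_{\cH}$, and Helly gives a bag containing both $u$ and $v$.
\item Every member of a $\beta(x)$-rooted induced $\cH$-packing is a blob through some vertex of $\beta(x)$, so it lies in $\gamma(x)$. The packing is then an independent set there, giving $\pi_{\cH}(G,\beta(x))\le\alpha(G^\circ_{\cH}[\gamma(x)])$ at once.
\item Vertices lying in no blob automatically belong to every bag. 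This is harmless, since they root nothing.
\end{itemize}

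This construction also replaces your leaf-bag step, which fails as written. The covered neighbours of an uncovered vertex need not have pairwise intersecting subtrees. For example, take $\cH=\{K_3\}$, two anticomplete triangles placed in separate bags, and a vertex adjacent to one vertex of each triangle.
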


For a graph $G$, let $\cH_G$ be the set of all subgraphs of $G$
isomorphic to a member of $\cH$. The \defn{$\cH$-graph} of $G$\footnote{The $\cH$-graph of $G$ is generally different from
$G^\circ_{\cH}$. For example, if $\cH=\{P_3\}$ and $G=K_3$, then
$G^\circ_{\cH}$ has no vertices, since $K_3$ has no induced copy of
$P_3$, whereas the $\cH$-graph of $G$ has three vertices,
corresponding to the three spanning $P_3$-subgraphs of $K_3$, one for
each choice of the omitted edge.},
denoted by $\cH(G)$, has vertex set $\cH_G$, and two distinct members
$F_1,F_2\in\cH_G$ are adjacent if $V(F_1)\cap V(F_2)\neq\emptyset$ or
there is an edge of $G$ with one endpoint in $V(F_1)$ and the other in
$V(F_2)$. We use the following result of Munaro and Yang.

\begin{theorem}[Munaro and Yang~\cite{MunaroYang2023}]
\label{thm:munaro-yang-H-graph}
Let $\cH\neq\emptyset$ be a fixed finite family of connected nonnull
graphs, and let
$
    h=\max_{J\in\cH}|V(J)|.
$
Let $G$ be a graph, and let $\mathcal D=(T,\delta)$ be a branch
decomposition of $G$. If $|V(\cH(G))|>1$, then one can construct, in
time
$
    O\bigl(|V(T)|+|V(G)|^{h+1}\bigr),
$
a branch decomposition
$
    \widehat{\mathcal D}
    =
    (\widehat T,\widehat\delta)
$
of $\cH(G)$ such that
$
    \simw_{\cH(G)}(\widehat{\mathcal D})
    \leq
    \simw_G(\mathcal D).
$
\end{theorem}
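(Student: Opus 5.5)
The plan is to build $\widehat{\mathcal D}$ by ``blowing up'' each leaf of $T$ into a small subtree carrying the members of $\cH_G$ anchored at that vertex. Then I bound the two kinds of cuts separately: the cuts inherited from $T$ reduce to sim-matchings of $G$, and the new internal cuts are controlled by a clique argument.

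\emph{Construction.} Fix a linear order on $V(G)$. Assign each $F\in\cH_G$ to its anchor $\sigma(F)$, the smallest vertex of $V(F)$; this uses that every member of $\cH$ is nonnull. For each $v\in V(G)$ let $\cH_v=\sigma^{-1}(v)$.
\begin{itemize}
\item If $|\cH_v|\geq 2$, replace the leaf $\delta(v)$ by a subcubic tree (say a caterpillar) whose leaves are the members of $\cH_v$, attached at the former position of $\delta(v)$.
\item If $|\cH_v|=1$, relabel the leaf $\delta(v)$ by that member.
\item If $\cH_v=\emptyset$, delete the leaf.
\end{itemize}
Then repeatedly delete unlabelled leaves and suppress degree-$2$ nodes. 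Since $|V(\cH(G))|>1$, the result is a subcubic tree $\widehat T$ whose leaves are in bijection with $\cH_G$, so it defines $\widehat{\mathcal D}=(\widehat T,\widehat\delta)$. Enumerating $\cH_G$ takes $O(|V(G)|^{h+1})$ time, and the tree surgery is linear in $|V(T)|+|\cH_G|$, which gives the stated running time.

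\emph{Cuts coming from $T$.} Every edge of $\widehat T$ either lies in one of the attached trees, or corresponds to an edge $e$ of $T$ (possibly after suppression). In the latter case the displayed cut of $\cH(G)$ is $(\sigma^{-1}(A_e),\sigma^{-1}(\overline A_e))$. Take a sim-matching $F_1G_1,\ldots,F_mG_m$ across this cut, with $\sigma(F_i)\in A_e$ and $\sigma(G_i)\in\overline A_e$.
\begin{itemize}
\item Since $F_i$ and $G_i$ are adjacent in $\cH(G)$ and each is connected, the set $U_i=V(F_i)\cup V(G_i)$ induces a connected subgraph of $G$ containing $\sigma(F_i)\in A_e$ and $\sigma(G_i)\in\overline A_e$.
\item Hence some edge $x_iy_i$ of $G[U_i]$ has $x_i\in A_e$ and $y_i\in\overline A_e$.
\item The sim-matching conditions say that for $i\neq j$ the members $F_i,F_j$, the members $G_i,G_j$, and the members $F_i,G_j$ are pairwise nonadjacent in $\cH(G)$. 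So $U_i$ and $U_j$ are disjoint and anticomplete in $G$.
\end{itemize}
Therefore $\{x_iy_i\}_{i\in[m]}$ is a sim-matching of $G$ across $(A_e,\overline A_e)$, and $m\leq\simw_G(\mathcal D)$.

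\emph{Cuts inside an attached tree.} Here one side is a set $S\subseteq\cH_v$ for a single $v$. All members of $S$ contain $v$, so $S$ is a clique in $\cH(G)$. The endpoints of a sim-matching on one side must be independent, so such a cut has cut-sim value at most $1$. If $\simw_G(\mathcal D)\geq1$ this bound suffices. If $\simw_G(\mathcal D)=0$, then $G$ is edgeless, every member of $\cH_G$ is a single vertex, each $\cH_v$ has size at most $1$, and no such internal cuts exist.

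The main obstacle is the bookkeeping in the cleanup step: I must check that deleting empty leaves and suppressing degree-$2$ nodes leaves every displayed cut of $\widehat T$ of one of the two types above, i.e.\ a cut induced through $\sigma$ by some cut of $T$, or a cut inside one anchored group. I expect this to follow directly from the fact that each $\sigma^{-1}$-fibre sits in the subtree replacing its leaf.
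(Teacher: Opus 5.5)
The paper does not prove this theorem; it is quoted from Munaro and Yang. So the only in-paper point of comparison is the restriction step in the proof of \cref{lem:munaro-yang-simw}.

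Your leaf blow-up argument is correct and complete. The following steps all check out.
\begin{itemize}
\item Anchoring each $F\in\cH_G$ at a fixed vertex of $V(F)$.
\item Turning a sim-matching of $\cH(G)$ across an inherited cut into a sim-matching of $G$ across $(A_e,\overline A_e)$. This works because the sets $U_i=V(F_i)\cup V(G_i)$ are connected, pairwise disjoint and pairwise anticomplete.
\item The clique bound for cuts inside an anchored group.
\item The separate treatment of $\simw_G(\mathcal D)=0$, where $G$ is edgeless and every $|\cH_v|\leq 1$.
\end{itemize}

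The bookkeeping you flag is routine. Deleted nodes carry no labels, and suppressed nodes have degree two. Hence every edge of $\widehat T$ displays the same partition of $\cH_G$ as some edge of the blown-up tree. This is exactly the minimal-subtree argument the paper uses in \cref{lem:munaro-yang-simw}.

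Suppression is in fact unnecessary: branch decompositions are only required to be subcubic, so degree-two nodes are allowed.
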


The graph $G^\circ_{\cH}$ is naturally an induced subgraph of
$\cH(G)$: it is obtained by retaining precisely those vertices of
$\cH(G)$ that are induced subgraphs of $G$. We need the following
customization of~\cref{thm:munaro-yang-H-graph}.

\begin{lemma}\label{lem:munaro-yang-simw}
Let $\cH\neq\emptyset$ be a fixed finite family of connected nonnull
graphs, and let
$
    h=\max_{J\in\cH}|V(J)|.
$
Let $G$ be a graph, and let $\mathcal D=(T,\delta)$ be a branch
decomposition of $G$. If $|V(G^\circ_{\cH})|>1$, then one can
construct, in time
$
    O\bigl(|V(T)|+|V(G)|^{h+1}\bigr),
$
a branch decomposition $\mathcal D'=(T',\delta')$ of
$G^\circ_{\cH}$ such that
$
    \simw_{G^\circ_{\cH}}(\mathcal D')
    \leq
    \simw_G(\mathcal D).
$
Consequently, for every graph $G$,
\begin{equation}\label{eq:simw-induced-H-graph}
    \simw(G^\circ_{\cH})\leq\simw(G).
\end{equation}
\end{lemma}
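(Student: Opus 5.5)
The plan is to derive the lemma from \cref{thm:munaro-yang-H-graph}, using the fact that $G^\circ_{\cH}$ is an induced subgraph of $\cH(G)$ and that sim-width of a branch decomposition does not increase when we restrict it to an induced subgraph.

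\textbf{Step 1: identify $G^\circ_{\cH}$ inside $\cH(G)$.} Each vertex $X$ of $G^\circ_{\cH}$ is a set such that $G[X]$ is isomorphic to some $J\in\cH$. We map $X$ to the subgraph $G[X]\in\cH_G$; this map is injective. Two sets $X,Y$ are adjacent in $G^\circ_{\cH}$ exactly when they intersect or some edge of $G$ joins them. This is the same adjacency rule as in $\cH(G)$, so the map is an isomorphism onto the induced subgraph of $\cH(G)$ formed by the induced members of $\cH_G$. Since $|V(G^\circ_{\cH})|>1$, we get $|V(\cH(G))|>1$. Hence \cref{thm:munaro-yang-H-graph} yields, in time $O(|V(T)|+|V(G)|^{h+1})$, a branch decomposition $\widehat{\mathcal D}=(\widehat T,\widehat\delta)$ of $\cH(G)$ with $\simw_{\cH(G)}(\widehat{\mathcal D})\le\simw_G(\mathcal D)$.

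\textbf{Step 2: restrict $\widehat{\mathcal D}$ to the vertex set $V(G^\circ_{\cH})$.}
\begin{enumerate}
\item Take the minimal subtree of $\widehat T$ spanning the leaves $\widehat\delta(X)$ for $X\in V(G^\circ_{\cH})$.
\item Suppress its degree-$2$ vertices. This gives a subcubic tree $T'$ whose leaves are exactly these images, and we let $\delta'$ be the restriction of $\widehat\delta$. Because there are at least two leaves, this is a valid branch decomposition.
\item Each edge $e'$ of $T'$ corresponds to a path of edges in the pruned tree, and any edge $e$ on this path displays a cut $(A_e,\overline A_e)$ of $V(\cH(G))$. The cut displayed by $e'$ is $(A_e\cap V',\overline A_e\cap V')$, where $V'=V(G^\circ_{\cH})$.
\item A sim-matching across this restricted cut in $G^\circ_{\cH}$ is also a sim-matching across $(A_e,\overline A_e)$ in $\cH(G)$. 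This holds because $G^\circ_{\cH}$ is induced, so the endpoints induce exactly the same edges.
\end{enumerate}
It follows that $\mathsf{cutsim}$ does not increase, and $\simw_{G^\circ_{\cH}}(\mathcal D')\le\simw_{\cH(G)}(\widehat{\mathcal D})\le\simw_G(\mathcal D)$. The pruning and suppression take time linear in $|V(\widehat T)|$. The tree $\widehat T$ has $O(|\cH_G|)=O(|V(G)|^h)$ leaves, so this fits within the stated time bound.

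\textbf{Step 3: deduce \eqref{eq:simw-induced-H-graph}.} If $|V(G^\circ_{\cH})|\le1$, then $\simw(G^\circ_{\cH})=0$ by convention and the inequality is trivial. If $|V(G)|\le1$, then $G^\circ_{\cH}$ has at most one vertex, so we are again in the trivial case. Otherwise we apply the construction to an optimal branch decomposition of $G$.

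The only real subtlety is checking in Step 1 that the adjacency rules coincide, so that the embedding is induced. Step 1 also needs $\cH(G)$ to have more than one vertex, which follows as shown. Everything else is routine bookkeeping.
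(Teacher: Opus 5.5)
Your proposal is correct and follows essentially the paper's argument: the induced embedding $X\mapsto G[X]$ of $G^\circ_{\mathcal H}$ into the $\mathcal H$-graph, the Munaro--Yang decomposition, restriction to the minimal subtree spanning the relevant leaves, and the observation that a sim-matching in an induced subgraph remains one in the ambient graph. Two small remarks. Suppressing degree-$2$ vertices is harmless but unnecessary, since subcubic trees may have them. For the running time, bound $|V(\widehat T)|$ by the construction time $O\bigl(|V(T)|+|V(G)|^{h+1}\bigr)$ of $\widehat{\mathcal D}$, as the paper does, rather than by its leaf count; the size of a subcubic tree is not controlled by its number of leaves.
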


\begin{proof}
Let
$
    n=|V(G)|
$
and let
\[
    \mathcal I
    =
    \{F\in V(\cH(G))\mid F=G[V(F)]\}.
\]
Thus, $\mathcal I$ consists precisely of the members of $\cH_G$ that
are induced subgraphs of $G$.

Define
\[
    \iota:V(G^\circ_{\cH})\to\mathcal I
\]
by
$
    \iota(X)=G[X]
$
for every $X\in V(G^\circ_{\cH})$. We first observe that $\iota$ is
an isomorphism from $G^\circ_{\cH}$ to the induced subgraph
$\cH(G)[\mathcal I]$. Indeed, by the definition of
$G^\circ_{\cH}$, the graph $G[X]$ is isomorphic to a member of $\cH$
for every $X\in V(G^\circ_{\cH})$, and hence
$\iota(X)\in\mathcal I$. Conversely, if $F\in\mathcal I$, then
$
    F=G[V(F)]
$
is isomorphic to a member of $\cH$, and therefore
$
    V(F)\in V(G^\circ_{\cH}).
$
Thus, $\iota$ is a bijection.

Let $X,Y\in V(G^\circ_{\cH})$ be distinct. By the definition of the
blob graph, $X$ and $Y$ are adjacent in $G^\circ_{\cH}$ if and only
if
$
    X\cap Y\neq\emptyset
$
or there is an edge of $G$ with one endpoint in $X$ and the other in
$Y$. By the definition of $\cH(G)$, this is equivalent to
$\iota(X)$ and $\iota(Y)$ being adjacent in $\cH(G)$. It follows that
$\iota$ is an isomorphism from $G^\circ_{\cH}$ to
$\cH(G)[\mathcal I]$.

Since
$
    |V(G^\circ_{\cH})|>1,
$
we have
$
    |V(\cH(G))|>1.
$
Apply~\cref{thm:munaro-yang-H-graph} to $G$ and $\mathcal D$. We
obtain, in time
$
    O\bigl(|V(T)|+n^{h+1}\bigr),
$
a branch decomposition
$
    \widehat{\mathcal D}
    =
    (\widehat T,\widehat\delta)
$
of $\cH(G)$ such that
$
    \simw_{\cH(G)}(\widehat{\mathcal D})
    \leq
    \simw_G(\mathcal D).
$

Let $T'$ be the minimal subtree of $\widehat T$ containing the set of
leaves
$
    \widehat\delta(\mathcal I)
    =
    \{\widehat\delta(F)\mid F\in\mathcal I\}.
$
Define
\[
    \delta':V(G^\circ_{\cH})\to V(T')
\]
by
$
    \delta'(X)=\widehat\delta(\iota(X))
$
for every $X\in V(G^\circ_{\cH})$.

We claim that $\mathcal D'=(T',\delta')$ is a branch decomposition of
$G^\circ_{\cH}$. Since $T'$ is a subtree of the subcubic tree
$\widehat T$, the tree $T'$ is subcubic. Moreover, every vertex of
$\widehat\delta(\mathcal I)$ is a leaf of $\widehat T$. Since
$|\mathcal I|>1$, each of these vertices is also a leaf of $T'$.
Conversely, every leaf of $T'$ belongs to
$\widehat\delta(\mathcal I)$: otherwise, deleting such a leaf from
$T'$ would leave a smaller subtree of $\widehat T$ containing all
vertices of $\widehat\delta(\mathcal I)$, contrary to the minimality
of $T'$. Thus, the leaves of $T'$ are precisely the vertices of
$\widehat\delta(\mathcal I)$. Since both $\iota$ and
$\widehat\delta|_{\mathcal I}$ are bijections, $\delta'$ is a
bijection from $V(G^\circ_{\cH})$ to the set of leaves of $T'$. This
proves the claim.

It remains to bound the sim-width of $\mathcal D'$. Fix an edge
$e\in E(T')$. Let
$
    (\widehat A_e,\overline{\widehat A_e})
$
be the bipartition of $V(\cH(G))$ displayed by $e$ in
$\widehat{\mathcal D}$, and let
$
    (A_e,\overline A_e)
$
be the bipartition of $V(G^\circ_{\cH})$ displayed by $e$ in
$\mathcal D'$. After possibly interchanging the names of the shores,
we have
\[
    \iota(A_e)=\widehat A_e\cap\mathcal I
    \qquad\text{and}\qquad
    \iota(\overline A_e)
    =
    \overline{\widehat A_e}\cap\mathcal I.
\]

Let
$
    M=\{X_1Y_1,\ldots,X_mY_m\}
$
be a sim-matching in $G^\circ_{\cH}$ across
$(A_e,\overline A_e)$. Since $\iota$ is an isomorphism from
$G^\circ_{\cH}$ to the induced subgraph $\cH(G)[\mathcal I]$, the
edges
$
    \iota(X_1)\iota(Y_1),\ldots,
    \iota(X_m)\iota(Y_m)
$
form a matching in $\cH(G)$ across
$(\widehat A_e,\overline{\widehat A_e})$. Moreover, their endpoint
set induces exactly these matching edges in
$\cH(G)[\mathcal I]$. Since $\cH(G)[\mathcal I]$ is an induced
subgraph of $\cH(G)$, the same endpoint set induces exactly these
edges in $\cH(G)$. Hence these edges form a sim-matching in
$\cH(G)$ across
$(\widehat A_e,\overline{\widehat A_e})$. Therefore,
$
    \mathsf{cutsim}_{G^\circ_{\cH}}(A_e,\overline A_e)
    \leq
    \mathsf{cutsim}_{\cH(G)}
    (\widehat A_e,\overline{\widehat A_e})
    \leq
    \simw_{\cH(G)}(\widehat{\mathcal D})
    \leq
    \simw_G(\mathcal D).
$
Since this holds for every edge $e\in E(T')$, we obtain
$
    \simw_{G^\circ_{\cH}}(\mathcal D')
    \leq
    \simw_G(\mathcal D).
$

We finally verify the running time. Since $\cH$ is fixed and every
member of $\cH$ has at most $h$ vertices, the graph $\cH(G)$ has
$
    O(n^h)
$
vertices. Indeed, there are $O(n^j)$ choices for the vertex set of a
$j$-vertex subgraph, where $j\leq h$, and, for each such vertex set,
only a constant number of possible subgraphs. For each
$F\in V(\cH(G))$, whether
$
    F=G[V(F)]
$
can be checked in time $O(h^2)$. Thus, the set $\mathcal I$ and the
isomorphism $\iota$ can be computed in time $O(n^h)$.

The minimal subtree $T'$ can be obtained from $\widehat T$ by
repeatedly deleting leaves that do not belong to
$\widehat\delta(\mathcal I)$. This takes time linear in the size of
$\widehat T$. Since $\widehat{\mathcal D}$ is constructed in time
$O\bigl(|V(T)|+n^{h+1}\bigr)$, its size, and hence the time required
for this restriction, is $O\bigl(|V(T)|+n^{h+1}\bigr)$. Therefore,
the entire construction of $\mathcal D'$ takes time
$
    O\bigl(|V(T)|+n^{h+1}\bigr).
$

To prove~\eqref{eq:simw-induced-H-graph}, first suppose that
$
    |V(G^\circ_{\cH})|>1.
$
Choose a branch decomposition $\mathcal D$ of $G$ satisfying
$
    \simw_G(\mathcal D)=\simw(G).
$
The first part of the lemma gives a branch decomposition
$\mathcal D'$ of $G^\circ_{\cH}$ such that
\[
    \simw(G^\circ_{\cH})
    \leq
    \simw_{G^\circ_{\cH}}(\mathcal D')
    \leq
    \simw_G(\mathcal D)
    =
    \simw(G).
\]
If
$
    |V(G^\circ_{\cH})|\leq1,
$
then $\simw(G^\circ_{\cH})=0$, and the same inequality
holds. This proves~\eqref{eq:simw-induced-H-graph}, and hence finishes the proof of~\cref{lem:munaro-yang-simw}.
\end{proof}


\subsection{Separators and rooted obstructions}

We next extract from~\cref{lem:hit-vs-anti} a consequence for cuts of
bounded cut-sim value. Let $c_{\ref{lem:hit-vs-anti}}(\cdot,\cdot,\cdot)$ denote the function from that lemma.

\begin{lemma}\label{lem:sim-cut-hitting-set}
Let $a,q\in\mathbb N$, let $G$ be a $K_{a,a}$-free graph, and let $A\subseteq V(G)$. If
$
    \mathsf{cutsim}_G(A,\overline A)<q,
$
then there is a set $X\subseteq V(G)$ such that
$
    \alpha(G[X])<c_{\ref{lem:hit-vs-anti}}(a,2,q)
$
and $X\cap\{x,y\}\ne\emptyset$ for every $x\in A$ and $y\in\overline A$ with $xy\in E(G)$.
\end{lemma}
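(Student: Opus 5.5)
We apply \cref{lem:hit-vs-anti} to the $2$-system $\calS$ consisting of the edges crossing the cut. Precisely, let
\[
    \calS=\bigl\{\{x,y\}\mid x\in A,\ y\in\overline A,\ xy\in E(G)\bigr\}.
\]
Every member is a nonempty set of size $2$, so $\calS$ is a $2$-system in $G$. Invoke \cref{lem:hit-vs-anti} with parameters $a$, $b=2$, and $q$. If outcome (a) holds, we obtain a hitting set $X$ with $\alpha(G[X])<c_{\ref{lem:hit-vs-anti}}(a,2,q)$. Hitting $\calS$ means exactly that $X\cap\{x,y\}\neq\emptyset$ for every crossing edge $xy$, which is the desired conclusion. (If $\calS=\emptyset$, take $X=\emptyset$.)

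It remains to rule out outcome (b). Suppose there is an anticomplete subsystem $\calQ\subseteq\calS$ with $|\calQ|\geq q$. Write its members as $\{x_1,y_1\},\ldots,\{x_q,y_q\}$ with $x_i\in A$ and $y_i\in\overline A$. Distinct members are anticomplete, hence in particular disjoint, since a shared vertex would be adjacent to its partner in the other pair. So the edges $x_iy_i$ form a matching of size $q$ across $(A,\overline A)$.

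Anticompleteness also implies there are no edges $x_ix_j$, $y_iy_j$, or $x_iy_j$ for $i\neq j$. Therefore the endpoint set induces exactly the edges $x_1y_1,\ldots,x_qy_q$, so this is a sim-matching of size $q$. This gives $\mathsf{cutsim}_G(A,\overline A)\geq q$, contradicting the hypothesis.

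There is no real obstacle here. The only care needed is noticing that anticomplete distinct pairs are automatically disjoint, which is what makes the matching genuine.
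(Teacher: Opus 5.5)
Your proposal is correct and is the same argument as the paper: apply \cref{lem:hit-vs-anti} to the $2$-system of crossing edges, and rule out outcome (b) because an anticomplete subfamily of size $q$ would be a sim-matching of size $q$. Your remark that distinct anticomplete pairs are automatically disjoint is a correct detail that the paper leaves implicit.
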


\begin{proof}
Apply~\cref{lem:hit-vs-anti} to the $2$-system
$
    \mathcal S_A=\{\{x,y\}\mid x\in A,\ y\in\overline A,\ xy\in E(G)\}.
$
If $\mathcal S_A$ contains an anticomplete subfamily of size $q$, then the corresponding crossing edges form a sim-matching of size $q$ across $(A,\overline A)$, a contradiction. Hence $\mathcal S_A$ has a hitting set $X$ with $\alpha(G[X])<c_{\ref{lem:hit-vs-anti}}(a,2,q)$.
\end{proof}

The next lemma contains the balanced-separator argument needed for the main theorem.

\begin{lemma}\label{lem:simw-balanced-separator}
Let $a\in\mathbb N$ and $s\in\mathbb N\cup\{0\}$, and let $G$ be a $K_{a,a}$-free graph with $\simw(G)\leq s$. For every vertex-weighting $\wei$ of $G$, there is a balanced separator $Z$ of $(G,\wei)$ such that
$
    \alpha(G[Z])<2c_{\ref{lem:hit-vs-anti}}(a,2,s+1).
$
\end{lemma}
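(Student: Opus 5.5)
Our approach is to extract from an optimal branch decomposition of $G$ an edge whose displayed cut is ``weight-balanced'', and then to turn that cut into a vertex separator by applying \cref{lem:sim-cut-hitting-set} twice. Fix a vertex-weighting $\wei$ and a branch decomposition $\mathcal D=(T,\delta)$ of $G$ with $\simw_G(\mathcal D)\leq s$. Graphs with at most one vertex are trivial; take $Z=\emptyset$. Then every displayed cut $(A_e,\overline A_e)$ satisfies $\mathsf{cutsim}_G(A_e,\overline A_e)<s+1$.

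\textbf{Step 1: a balanced edge.} We want an edge $e$ of $T$ such that both shores have weight at most $\wei(G)/2$, or else a structure that can be handled directly. For this we orient each edge of $T$ toward the side whose leaves carry weight strictly more than $\wei(G)/2$, if such a side exists. Consider the following cases.
\begin{itemize}
\item If some edge has both sides of weight at most $\wei(G)/2$, take that edge.
\item Otherwise every edge is oriented, and we follow the orientation to a sink node $x$ of $T$, which has degree at most $3$. If $x$ were a leaf $\delta(v)$, then the single vertex $v$ would have weight more than $\wei(G)/2$; in that case $\{v\}$ is a separator with $\alpha=1$, and we are done.
\item If $x$ is an internal sink, its at most three incident branches each carry weight at most $\wei(G)/2$. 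Choose one incident edge $e$ with $A_e$ equal to the leaves of one branch. Then $\wei(A_e)\leq \wei(G)/2$, while $\overline A_e$ is the union of at most two branches, each of weight at most $\wei(G)/2$. Let $e'$ be one of the other incident edges, with shore $A_{e'}$.
\end{itemize}
The point is that $V(G)$ is then covered by at most three shores of displayed cuts, each of weight at most $\wei(G)/2$. Two cuts suffice: for the sink case we use the cuts at $e$ and $e'$, which refine $V(G)$ into three parts, namely the three branches.

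\textbf{Step 2: hitting crossing edges.} Apply \cref{lem:sim-cut-hitting-set} to each of the two chosen cuts with $q=s+1$. This yields sets $X_1$ and $X_2$, each with $\alpha<c_{\ref{lem:hit-vs-anti}}(a,2,s+1)$, each hitting every edge crossing its cut. Set $Z=X_1\cup X_2$, so that $\alpha(G[Z])<2c_{\ref{lem:hit-vs-anti}}(a,2,s+1)$. Every edge of $G-Z$ crosses neither cut, so every connected component of $G-Z$ lies entirely inside one part of the common refinement, that is, inside a single branch at $x$. Each branch has weight at most $\wei(G)/2$, so $Z$ is a balanced separator. In the first case of Step 1, where a single balanced edge exists, one application of \cref{lem:sim-cut-hitting-set} suffices.

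\textbf{Main obstacle.} The delicate part is the bookkeeping in Step 1. We must check that the orientation argument produces a sink, that every branch at the sink has weight at most $\wei(G)/2$ (this holds because each incident edge points toward $x$), and that two cuts refine $V(G)$ into exactly those branches. We also need the degree-at-most-three property of subcubic trees; this is precisely why the bound carries the factor $2$.
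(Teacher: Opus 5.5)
Your proposal is correct and is essentially the paper's proof: the paper takes a weighted centroid $x$ of $T$ (your orientation-and-sink argument just proves that such a node exists, and your separate ``balanced edge'' case is harmless but unnecessary), uses $\{v\}$ when $x=\delta(v)$ is a leaf, and otherwise applies \cref{lem:sim-cut-hitting-set} to the cuts at $p-1\leq 2$ of the edges incident to $x$. One small slip: for a one-vertex graph of positive weight, $Z=\emptyset$ is not a balanced separator, so take $Z=V(G)$ instead, which has independence number $1<2c_{\ref{lem:hit-vs-anti}}(a,2,s+1)$.
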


\begin{proof}
Let
\[
    c=c_{\ref{lem:hit-vs-anti}}(a,2,s+1).
\]
If $|V(G)|\leq 1$, then $Z=V(G)$ satisfies the conclusion. Thus, assume that $|V(G)|\geq 2$, and choose a branch decomposition $\mathcal D=(T,\delta)$ of $G$ with $\simw_G(\mathcal D)\leq s$. Assign weight $\wei(v)$ to the leaf $\delta(v)$ and weight zero to every non-leaf of $T$. Let $x$ be a weighted centroid\footnote{For a tree $T$ equipped with a nonnegative vertex-weighting
$\mu$, a \defn{weighted centroid} is a vertex $x\in V(T)$ such that
$\mu(V(Q))\leq \mu(V(T))/2$ for every component $Q$ of $T-x$.
Every vertex-weighted tree has a weighted centroid.} of $T$.
If $x=\delta(v)$ is a leaf, then the total weight of all other leaves is at most $\wei(G)/2$. Hence $\{v\}$ is a balanced separator of $(G,\wei)$ and $\alpha(G[\{v\}])=1<2c$. We may therefore assume that $x$ is not a leaf. Let $T_1,\ldots,T_p$ be the components of $T-x$, where $p\in\{2,3\}$, and let
\[
    A_i=\{v\in V(G)\mid \delta(v)\in V(T_i)\}
    \qquad\text{for every }i\in[p].
\]
Then $\wei(A_i)\leq\wei(G)/2$ for every $i\in[p]$.

For every $i\in[p-1]$, the edge joining $x$ to $T_i$ displays the cut
$(A_i,\overline{A_i})$. Since
$
    \mathsf{cutsim}_G(A_i,\overline{A_i})\leq s<s+1,
$
\cref{lem:sim-cut-hitting-set} gives a set $X_i\subseteq V(G)$ that meets every edge between $A_i$ and $\overline{A_i}$ and satisfies $\alpha(G[X_i])<c$. Set
\[
    Z=\bigcup_{i=1}^{p-1}X_i.
\]
Then
\[
    \alpha(G[Z])
    \leq\sum_{i=1}^{p-1}\alpha(G[X_i])
    <(p-1)c
    \leq 2c.
\]
Moreover, no edge of $G-Z$ has endpoints in distinct sets among $A_1,\ldots,A_p$. Indeed, if $uv\in E(G-Z)$ with $u\in A_i$ and $v\in A_j$, where $i<j$, then $i\leq p-1$ and $X_i$ meets $\{u,v\}$, a contradiction. Hence every component $C$ of $G-Z$ is contained in some $A_i$, and therefore
$
    \wei(V(C))\leq\wei(A_i)\leq\frac{\wei(G)}{2}.
$
Thus, $Z$ is a balanced separator of $(G,\wei)$. This proves~\cref{lem:simw-balanced-separator}.
\end{proof}

We next relate induced bicliques in $G_H^\circ$ to rooted
$H$-obstructions. We use~\cref{thm:ramsey} by applying
it to an auxiliary edge-colored complete graph.

Let $H$ be a connected graph on $h$ vertices. For $r\in\mathbb N$,
set
\[
    q_H(r)=
    \begin{cases}
        r, & \text{if }h=1,\\
        (h+1)\Ram_{h^2}\bigl(\underbrace{2r,\ldots,2r}_{h^2\text{ times}}\bigr), & \text{if }h\geq2.
    \end{cases}
\]

\begin{lemma}\label{lem:biclique-to-rooted-H-obstruction}
Let $H$ be a connected graph and let $r\in\mathbb N$. If
$G_H^\circ$ contains an induced copy of
$K_{q_H(r),q_H(r)}$, then $G$ contains a rooted
$H$-obstruction of order $r$ as an induced subgraph.
\end{lemma}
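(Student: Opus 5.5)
The plan is to read the induced biclique in $G_H^\circ$ as two families of pairwise anticomplete induced copies of $H$ in which every copy from one family touches every copy from the other. From these I extract $r$ copies on each side that are pairwise disjoint across the sides and whose cross edges all use the same pair of roots $(u,v)$. The root pair is found by Ramsey's theorem (\cref{thm:ramsey}).

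Write $q=q_H(r)$. The induced $K_{q,q}$ in $G_H^\circ$ consists of sets $X_1,\ldots,X_q$ and $Y_1,\ldots,Y_q$, each inducing a copy of $H$ in $G$. Since the $X_i$ are pairwise nonadjacent in $G_H^\circ$, they are pairwise disjoint and pairwise anticomplete in $G$; the same holds for the $Y_j$. Every $X_i$ is adjacent to every $Y_j$ in $G_H^\circ$. If $h=1$, each blob is a single vertex, adjacency in $G_H^\circ$ is adjacency in $G$, and the biclique is itself an induced $K_{r,r}$ in $G$. This is a rooted $K_1$-obstruction of order $r$, so we are done.

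Assume $h\ge 2$ and set $N=\Ram_{h^2}(2r,\ldots,2r)$, so that $q=(h+1)N$. The first step is to make the two sides disjoint. Each vertex lies in at most one $Y_j$, because the $Y_j$ are disjoint. Hence each $X_i$ meets at most $h$ of the sets $Y_j$. Keep $X_1,\ldots,X_N$. Together they meet at most $hN$ of the $Y_j$, so at least $q-hN=N$ of the $Y_j$ are disjoint from all kept $X_i$; keep $N$ of them and relabel them $Y_1,\ldots,Y_N$. Now every $X_i$ is disjoint from every $Y_j$ while still adjacent to it in $G_H^\circ$. Therefore there is an edge of $G$ between $X_i$ and $Y_j$.

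Next, fix isomorphisms $\varphi_i\colon H\to G[X_i]$ and $\psi_j\colon H\to G[Y_j]$. For each pair $i<j$ in $[N]$, colour the edge $ij$ of $K_N$ by some pair $(u,v)\in V(H)^2$ with $\varphi_i(u)\psi_j(v)\in E(G)$. Such a pair exists by the previous paragraph, so this is an $h^2$-colouring. By \cref{thm:ramsey}, there are indices $i_1<\cdots<i_{2r}$ forming a monochromatic $K_{2r}$ of some colour $(u,v)$. Take $X_{i_1},\ldots,X_{i_r}$ as the $X$-side and $Y_{i_{r+1}},\ldots,Y_{i_{2r}}$ as the $Y$-side. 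Every cross pair has smaller index on the $X$-side, so $\varphi_{i_a}(u)\psi_{i_{r+b}}(v)\in E(G)$ for all $a,b\in[r]$.

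Finally, let $F$ be the subgraph of $G$ induced by the union of these $2r$ sets. The sets are pairwise disjoint, the anticompleteness within each side is inherited from $G$, and the restricted isomorphisms $\varphi$ and $\psi$ witness the definition. Hence $F$ is an $(H,u,v)$-obstruction of order $r$, as required. The only delicate point is that Ramsey's theorem for ordinary complete graphs must be used to obtain a bipartite-type pattern. This is handled by colouring only the pairs $i<j$ and splitting the monochromatic clique into a first half and a second half, which is why $2r$ appears in place of $r$. The disjointness step accounts for the factor $h+1$.
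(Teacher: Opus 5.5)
Your proposal is correct and follows essentially the same argument as the paper. It first prunes to $N$ sets per side that are mutually disjoint, which is where the factor $h+1$ comes from. It then colours each pair $i<j$ by a root pair $(u,v)\in V(H)^2$ and splits a monochromatic $K_{2r}$ into its first and second halves to obtain an $(H,u,v)$-obstruction of order $r$.
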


\begin{proof}
Let $h=|V(H)|$. If $h=1$, then $G_H^\circ$ is isomorphic to $G$,
and an induced $K_{r,r}$ in $G$ is a rooted $H$-obstruction of
order $r$. We may therefore assume that $h\geq2$.
Let $(\mathcal X,\mathcal Y)$ be the bipartition of an induced copy
of $K_{q_H(r),q_H(r)}$ in $G_H^\circ$, and let
$
    N=\Ram_{h^2}\bigl(\underbrace{2r,\ldots,2r}_{h^2\text{ times}}\bigr).
$
Since $\mathcal X$ and $\mathcal Y$ are independent sets in
$G_H^\circ$, both are induced $H$-packings in $G$.

Choose a subfamily $\mathcal X_0\subseteq\mathcal X$ of size $N$
and set
$
    U=\bigcup_{X\in\mathcal X_0}X.
$
Since $|U|=hN$ and the members of $\mathcal Y$ are pairwise
disjoint, at most $hN$ members of $\mathcal Y$ intersect $U$. As
$
    |\mathcal Y|=(h+1)N,
$
there is a subfamily $\mathcal Y_0\subseteq\mathcal Y$ of size $N$
such that
\[
    U\cap\bigcup_{Y\in\mathcal Y_0}Y=\emptyset.
\]
Write
\[
    \mathcal X_0=\{X_1,\ldots,X_N\}
    \qquad\text{and}\qquad
    \mathcal Y_0=\{Y_1,\ldots,Y_N\},
\]
and fix isomorphisms
\[
    \varphi_i:H\to G[X_i]
    \qquad\text{and}\qquad
    \psi_i:H\to G[Y_i]
    \qquad\text{for every }i\in[N].
\]

We define an edge-coloring of the complete graph on vertex set
$[N]$. Let $1\leq i<j\leq N$. The vertices $X_i$ and $Y_j$ are
adjacent in $G_H^\circ$. Since $X_i\cap Y_j=\emptyset$, there is an
edge of $G$ between $X_i$ and $Y_j$. Choose one such edge and
color the edge $ij$ with the pair $(x,y)\in V(H)^2$ for which the
chosen edge is
$
    \varphi_i(x)\psi_j(y).
$
This is an edge-coloring with at most $h^2$ colors.

Since $N=\Ram_{h^2}\bigl(\underbrace{2r,\ldots,2r}_{h^2\text{ times}}\bigr)$, it follows from
\cref{thm:ramsey} that there are indices
$
    i_1<i_2<\cdots<i_{2r}
$
and vertices $u,v\in V(H)$ such that every edge of the clique on
$\{i_1,\ldots,i_{2r}\}$ has color $(u,v)$. Set
\[
    I=\{i_1,\ldots,i_r\}
    \qquad\text{and}\qquad
    J=\{i_{r+1},\ldots,i_{2r}\}.
\]
For every $i\in I$ and $j\in J$, we have $i<j$, and hence the
definition of the coloring gives
$
    \varphi_i(u)\psi_j(v)\in E(G).
$
The graph induced by
$
    \bigcup_{i\in I}X_i\ \cup\ \bigcup_{j\in J}Y_j
$
is an $(H,u,v)$-obstruction of order $r$: the sets $X_i$ are
pairwise anticomplete, the sets $Y_j$ are pairwise anticomplete,
and all the required edges between the roots are present. This proves~\cref{lem:biclique-to-rooted-H-obstruction}.
\end{proof}

\subsection{Proof of~\cref{thm:simw-rooted-H-obstruction-restate}}

\begin{proof}
Suppose first that $\mathcal G$ has bounded induced $H$-packing
treewidth, say $\treepi_H(G)\leq k$ for every $G\in\mathcal G$.
Set $r=k+1$. If some $G\in\mathcal G$ contained a rooted
$H$-obstruction $F$ of order $r$ as an induced subgraph, then
\cref{lem:rooted-H-obstruction-lower-bound} and induced-subgraph
monotonicity would give
\[
k\geq\treepi_H(G)\geq\treepi_H(F)\geq r=k+1,
\]
a contradiction. Thus, no graph in $\mathcal G$ contains a rooted
$H$-obstruction of order $r$.

Conversely, suppose that there is an integer $r\geq1$ such that no
graph in $\mathcal G$ contains a rooted $H$-obstruction of order $r$.
Since
$\mathcal G$ has bounded sim-width, there is an integer
$s\in\mathbb N\cup\{0\}$ such that
$
    \simw(G)\leq s
$
for every $G\in\mathcal G$. Set
\[
    q=q_H(r)
    \qquad\text{and}\qquad
    c=c_{\ref{lem:hit-vs-anti}}(q,2,s+1).
\]

Fix a graph $G\in\mathcal G$, and let
$
    J=G_H^\circ.
$
By~\eqref{eq:simw-induced-H-graph}, we have
$
    \simw(J)
    \leq
    \simw(G)
    \leq
    s.
$
Moreover, $J$ is $K_{q,q}$-free. Indeed, if $J$ contained an induced
copy of $K_{q,q}$, then
\cref{lem:biclique-to-rooted-H-obstruction}, together with the
definition of $q=q_H(r)$, would imply that $G$ contains a rooted
$H$-obstruction of order $r$ as an induced subgraph, contrary to the
choice of $r$.

Apply~\cref{lem:simw-balanced-separator} to $J$ with $a=q$. It follows
that, for every vertex-weighting $\we$ of $J$, the weighted graph
$(J,\we)$ has a balanced separator $Z$ satisfying
$
    \alpha(J[Z])<2c.
$
In particular,
$
    \alpha(J[Z])\leq2c.
$
Hence, by~\cref{lem:sep-vs-ta},
$
    \ta(J)\leq10c.
$
Finally, \cref{lem:blob-equivalence}, applied to the singleton family
$\{H\}$, gives
$
    \treepi_H(G)
    =
    \ta(G_H^\circ)
    =
    \ta(J)
    \leq
    10c.
$
Thus, for every $G\in\mathcal G$, we have
$
    \treepi_H(G)
    \leq
    10c_{\ref{lem:hit-vs-anti}}
    \bigl(q_H(r),2,s+1\bigr).
$
This bound depends only on $H$, $r$, and the sim-width bound $s$ for
$\mathcal G$, and not on the particular graph $G$. Therefore,
$\mathcal G$ has bounded induced $H$-packing treewidth. This completes the proof of~\cref{thm:simw-rooted-H-obstruction-restate}.
\end{proof}

\paragraph{AI disclosure.} ChatGPT was used to identify and correct omissions
and minor errors and to polish the exposition. The authors take full
responsibility for the content of the paper.
\bibliographystyle{abbrvurl}
\bibliography{ref}

\end{document}